\title[Dualities from $\Phi$-Cuspidal Pairs and Affine Springer Fibers]{Level-Rank Dualities from $\Phi$-Cuspidal Pairs and Affine Springer Fibers}
\author[Minh-T\^{a}m Trinh]{Minh-T\^{a}m Quang Trinh}
\address{Department of Mathematics, Yale University, New Haven, CT 06520}
\email{minh-tam.trinh@yale.edu}
\author{Ting Xue}
\address{School of Mathematics and Statistics, University of Melbourne, VIC 3010, Australia}
\email{ting.xue@unimelb.edu.au}
\begin{document}

\begin{abstract}
We propose a generalization of the level-rank dualities arising from Uglov's work on higher-level Fock spaces.
The statements use Hecke algebras defined by Brou\'{e}--Malle, which conjecturally describe the endomorphisms of Lusztig induction modules, and a generalization of Harish-Chandra theory due to Brou\'{e}--Malle--Michel.
For any generic finite reductive group $\mathbb{G}$ and integers $e, m > 0$, we conjecture that:
(1) the intersection of a $\Phi_e$-Harish-Chandra series and a $\Phi_m$-Harish-Chandra series is parametrized by a union of blocks of the Hecke algebra of the $\Phi_e$-cuspidal pair at an $m$th root of unity, and similarly for the Hecke algebra of the $\Phi_m$-cuspidal pair at an $e$th root of unity;
(2) these parametrizations match the blocks on the two sides;
(3) when two blocks match, the bijection between them lifts to a derived equivalence between associated blocks of rational DAHAs.
Surprisingly, these structures also appear in bimodules formed from the cohomology of affine Springer fibers studied by Oblomkov--Yun.
When $\mathbb{G} = \mathbb{GL}_n$ and $e, m$ are coprime, we show that (1)--(3) hold, and that (3) recovers the level-rank dualities conjectured by Chuang--Miyachi and later proved through the work of several other people.
Finally, we verify for many cases where $\mathbb{G}$ is exceptional that Brou\'e--Malle's parameters are numerically compatible with our conjectures.
\end{abstract}

\maketitle

\thispagestyle{empty}



\section{Introduction}

\subsection{}

\dfemph{Double affine Hecke algebras}, or \dfemph{DAHAs}, were introduced by Cherednik to prove Macdonald's conjectures on orthogonal polynomials~\cite{cherednik_92, cherednik_95}.
Their rational degenerations, introduced in~\cite{drinfeld, eg}, are also known as \dfemph{rational Cherednik algebras}.
A rational DAHA has an analogue of the Bernstein--Gelfand--Gelfand category O of a semisimple Lie algebra.
Among other applications, this is a highest-weight category that controls the decomposition numbers of a Hecke algebra associated with the underlying reflection group~\cite{ggor}.

In~\cite{springer_76}, Springer constructed representations of finite Weyl groups using the cohomology of fixed-point varieties in flag manifolds, now known as Springer fibers.
In~\cite{lusztig_96}, Lusztig extended this work to affine Weyl groups, using ind-varieties now known as \dfemph{affine Springer fibers}.
In~\cite{oy}, motivated by $K$-theoretic work in~\cite{vv}, Oblomkov--Yun developed a double-affine analogue, involving actions of trigonometric and rational DAHAs on the modified cohomology of \dfemph{homogeneous} affine Springer fibers.
In this setting, the Springer actions intertwine with monodromy actions of certain braid groups, associated with smaller complex reflection groups in the manner of~\cite{bmr}.
This paper grew from our attempts to find formulas for the resulting (DAHA, braid-group) bimodules.

In~\cite{trinh_21}, motivated by a putative Betti avatar of Oblomkov--Yun's setup, the  first named author conjectured a formula for the virtual graded character of their rational DAHA module in the split case, taking the form
\begin{align}\label{eq:trinh}
\sum_\chi {\Deg_\chi(e^{2\pi i\nu})}[\Delta_\nu(\chi)].
\end{align}
Above, $\nu \in \bb{Q}_{> 0}$ is the constant parameter of the rational DAHA.
The sum runs over the irreducible characters of the Weyl group $W$.
The expression $[\Delta_\nu(\chi)]$ is the graded character of the standard module of the rational DAHA indexed by $\chi$.
Its coefficient is an \emph{integer} value of the \dfemph{generic-degree polynomial} $\Deg_\chi(x)$ defined as follows:
For any split finite reductive group with Weyl group $W$ over a finite field $\bb{F}_q$, the value $\Deg_\chi(q)$ is the degree of its unipotent principal series character indexed by $\chi$.
In this way, \eqref{eq:trinh} suggests a connection between the rational DAHA modules in~\cite{oy} and representations of finite groups of Lie type.

In~\cite{gvx, vx}, in the context of their ongoing study of character sheaves for graded Lie algebras,\footnote{Also known as anti-orbital complexes.} Grinberg, Vilonen, and the second named author constructed a local system over the same base space as the affine Springer fibration in~\cite{oy}.
They computed that its monodromy, a priori a braid-group action, factors through a Hecke algebra for the underlying complex reflection group.
We expect that the local system in Oblomkov--Yun's work is essentially Grinberg--Vilonen--Xue's, via a double application of Fourier duality passing through work of Lusztig--Yun~\cite{ly_17, ly_18}.
Thus the monodromy representations in~\cite{oy} would factor through the same Hecke algebras.

\subsection{}

These starting points have led us to a more general framework, phrased most clearly in the formalism of \dfemph{generic finite reductive groups}~\cite{bm_92, bmm}.
We carefully review this formalism in \Crefrange{sec:hc}{sec:hecke}.

A generic finite reductive group $\bbb{G}$ consists of a root datum $\Gamma_\bbb{G}$ and a finite-order automorphism of $\Gamma_\bbb{G}$, defined up to composition with the Weyl group.
For each prime power $q > 1$, the root datum defines a connected reductive group $G$ over $\bar{\bb{F}}_q$, and the automorphism defines a $q$-Frobenius $F : G \to G$, hence a finite reductive group $\bbb{G}(q) \vcentcolon= G^F$.
The work~\cite{bmm} concerns the parts of the representation theory of $\bbb{G}(q)$ where $q$ can be treated as an indeterminate, or which do not depend on $q$ at all.
In particular, the irreducible characters of $\bbb{G}(q)$ which are \dfemph{unipotent} in the sense of Deligne--Lusztig theory can be indexed by a set $\Uch(\bbb{G})$ depending only on $\bbb{G}$.

For each integer $e > 0$, Brou\'e--Malle--Michel define the notion of a $\Phi_e$-cuspidal pair of $\bbb{G}$ and the associated $\Phi_e$-Harish-Chandra series of $\Uch(\bbb{G})$.
The Weyl group of $\bbb{G}$ acts on the set of $\Phi_e$-cuspidal pairs.
The fundamental theorem of~\cite[\S3.2]{bmm} states that as we run over representatives for the orbits, their $\Phi_e$-Harish-Chandra series are pairwise disjoint and partition $\Uch(\bbb{G})$.
When $e = 1$, this partition is that found in Harish-Chandra's philosophy of cusp forms.

In more detail:
We say that a generic Levi subgroup $\bbb{L}$ of $\bbb{G}$ is \dfemph{$\Phi_e$-split} if and only if it is the centralizer of a \dfemph{$\Phi_e$-torus}: the generic version of a torus whose order is a power of $\Phi_e(q)$, where $\Phi_e$ is the $e$th cyclotomic polynomial.
For such $\bbb{L}$, we say that $\lambda \in \Uch(\bbb{L})$ is \dfemph{$\Phi_e$-cuspidal}, and that $(\bbb{L}, \lambda)$ is a \dfemph{$\Phi_e$-cuspidal pair}, if and only if the Lusztig restriction of $\lambda$ to any strictly smaller $\Phi_e$-split Levi subgroup $\bbb{M}$ is zero.
The \dfemph{$\Phi_e$-Harish-Chandra series} $\Uch(\bbb{G})_{\bbb{L}, \lambda}$ is the set of all irreducible constituents of the Lusztig induction of $\lambda$ from $\bbb{L}$ to $\bbb{G}$.

Each $\Phi_e$-cuspidal pair $(\bbb{L}, \lambda)$ gives rise to a complex reflection group called its \dfemph{relative Weyl group} $W^\bbb{G}_{\bbb{L}, \lambda}$.
In~\cite{bm_93}, Brou\'e--Malle define a ring $H^\bbb{G}_{\bbb{L}, \lambda}(x)$ with an indeterminate $x$, and specializations of the form $H^\bbb{G}_{\bbb{L}, \lambda}(\alpha) = H^\bbb{G}_{\bbb{L}, \lambda}(x)|_{x \to \alpha}$, such that if $\zeta_e$ is a primitive $e$th root of unity, then $H^\bbb{G}_{\bbb{L}, \lambda}(\zeta_e)$ recovers the group algebra of $W^\bbb{G}_{\bbb{L}, \lambda}$.
They conjecture that if $q > 1$ is a prime power, then $H^\bbb{G}_{\bbb{L}, \lambda}(q)$ recovers the $\bbb{G}(q)$-equivariant endomorphism algebra of the Lusztig induction module arising from $(\bbb{L}, \lambda)$.
See \Cref{conj:end} for a precise statement.

This conjecture, together with an isomorphism between $H^\bbb{G}_{\bbb{L}, \lambda}(q)$ and the group algebra of $W^\bbb{G}_{\bbb{L}, \lambda}$, would imply a bijection
\begin{align}\label{eq:chi-intro}
\chi_{\bbb{L}, \lambda}^\bbb{G} : \Uch(\bbb{G})_{\bbb{L}, \lambda} \xrightarrow{\sim} \Irr(W^\bbb{G}_{\bbb{L}, \lambda})
\end{align}
compatible with induction from smaller $\Phi_e$-split Levis.
Even though the conjecture has only been shown in certain cases, Brou\'e--Malle--Michel establish the putative bijections $\chi_{\bbb{L}, \lambda}^\bbb{G}$ for all $\bbb{G}$ and $(\bbb{L}, \lambda)$, via case-by-case arguments~\cite{bmm}.

\subsection{}

Suppose that $\bbb{A}$ is a  maximal torus of $\bbb{G}$ that is $\Phi_1$-split as a Levi, and that the automorphisms defining $\bbb{G}$ and $\bbb{A}$ are induced by the same Dynkin-diagram automorphism.
In this case, the BGG category O of the rational DAHA in~\cite{oy}, depending on a \dfemph{slope} $\nu \in \bb{Q}_{> 0}$, forms a highest-weight cover of the module category of $H^\bbb{G}_{\bbb{A}, 1}(e^{2\pi i\nu})$.

On the other hand, if $m$ is the denominator of $\nu$ in lowest terms and $\bbb{T}$ is a maximal torus that is $\Phi_m$-split as a Levi of $\bbb{G}$, then $W^\bbb{G}_{\bbb{T}, 1}$ is the complex reflection group whose braid group appears in~\cite{oy}.
Recall that we expect the braid action in \emph{ibid.}\ to factor through the Hecke algebra from~\cite{vx}.
It turns out that the latter algebra is precisely $H^\bbb{G}_{\bbb{T}, 1}(1)$.


These observations suggest that the (DAHA, braid-group) bimodules in~\cite{oy} manifest some sort of duality between $H^\bbb{G}_{\bbb{A}, 1}(e^{2\pi i\nu})$ and $H^\bbb{G}_{\bbb{T}, 1}(1)$.
More generally, one might expect that for any $e, m > 0$, any $\Phi_e$-cuspidal pair $(\bbb{L}, \lambda)$, and any $\Phi_m$-cuspidal pair $(\bbb{M}, \mu)$, there are dualities that match certain categories of $H^\bbb{G}_{\bbb{L}, \lambda}(\zeta_m)$-modules and $H^\bbb{G}_{\bbb{M}, \mu}(\zeta_e)$-modules.
We have found evidence that this is so.

As explained in~\cite{gp, gj}, there is a \dfemph{decomposition map} going essentially from characters of $W^\bbb{G}_{\bbb{L}, \lambda}$ to isomorphism classes of $H^\bbb{G}_{\bbb{L}, \lambda}(\zeta_m)$-modules.
Via this map, $\Irr(W^\bbb{G}_{\bbb{L}, \lambda})$ is partitioned into subsets $\Irr(W^\bbb{G}_{\bbb{L}, \lambda})_\sf{b}$ indexed by blocks $\sf{b}$ of the category of $H^\bbb{G}_{\bbb{L}, \lambda}(\zeta_m)$-modules.
We will refer to these subsets as \dfemph{$H^\bbb{G}_{\bbb{L}, \lambda}(\zeta_m)$-blocks}.
Let $\Irr(W^\bbb{G}_{\bbb{L}, \lambda})_{\bbb{M}, \mu}$ and $\Irr(W^\bbb{G}_{\bbb{M}, \mu})_{\bbb{L}, \lambda}$ respectively denote the images of the left and right maps below:
\begin{align}\label{eq:chi-double-intro}
\Irr(W^\bbb{G}_{\bbb{L}, \lambda})
	\xleftarrow{\chi_{\bbb{L}, \lambda}^\bbb{G}}
	\Uch(\bbb{G})_{\bbb{L}, \lambda} \cap \Uch(\bbb{G})_{\bbb{M}, \mu}
	\xrightarrow{\chi_{\bbb{M}, \mu}^\bbb{G}}
		\Irr(W^\bbb{G}_{\bbb{M}, \mu}).
\end{align}
The statements below comprise \Cref{conj:bij,conj:cat} in the body text.

\begin{mainconj}\label[mainconj]{mainconj:1}
For any $\bbb{G}, e, m, (\bbb{L}, \lambda), (\bbb{M}, \mu)$ as above:
\begin{enumerate}
\item[(I)]
		$\Irr(W^\bbb{G}_{\bbb{L}, \lambda})_{\bbb{M}, \mu}$ and $\Irr(W^\bbb{G}_{\bbb{M}, \mu})_{\bbb{L}, \lambda}$ are, respectively, unions of $H^\bbb{G}_{\bbb{L}, \lambda}(\zeta_m)$-blocks and $H^\bbb{G}_{\bbb{M}, \mu}(\zeta_e)$-blocks.

\item[(II)]
		The maps in \eqref{eq:chi-double-intro} induce a bijection 
		\begin{align}
		\{\sf{b} \mid \Irr(W^\bbb{G}_{\bbb{L}, \lambda})_\sf{b} \subseteq \Irr(W^\bbb{G}_{\bbb{L}, \lambda})_{\bbb{M}, \mu}\}
			\xleftrightarrow{\sim}
			\{\sf{c} \mid \Irr(W^\bbb{G}_{\bbb{M}, \mu})_\sf{c} \subseteq \Irr(W^\bbb{G}_{\bbb{M}, \mu})_{\bbb{L}, \lambda}\}.
			\end{align}

\item[(III)]
		When $\sf{b}$ and $\sf{c}$ correspond to each other in (II), the resulting bijection 
		\begin{align}
		\Irr(W^\bbb{G}_{\bbb{L}, \lambda})_\sf{b} \xleftrightarrow{\sim} \Irr(W^\bbb{G}_{\bbb{M}, \mu})_\sf{c}
		\end{align}
		arises from an equivalence between the bounded derived categories of highest-weight covers of $\sf{b}$ and $\sf{c}$.

\end{enumerate}
\end{mainconj}

\subsection{}

In \Cref{sec:asf}, we review the precise setup of Oblomkov--Yun.
Then we present evidence toward a conjectural formula for their (DAHA, braid-group) bimodules, or rather, its class in a certain Grothendieck group:
See \Cref{prop:evidence}.

For now, we merely sketch their geometry.
Suppose that the root datum $\Gamma_\bbb{G}$ is irreducible, that $\bbb{G}$ arises from a Dynkin-diagram automorphism, and that $\bbb{A}$ is a maximal torus of $\bbb{G}$ that is $\Phi_1$-split  as a Levi.
Let $\fr{g}$ be the Lie algebra over $\bb{C}[\![z]\!]$ corresponding to $\bbb{G}$, and let $L\fr{g}$ be the formal loop space for which $L\fr{g}(\bb{C}) = \fr{g}(\bb{C}(\!(z)\!))$ with $z$ an indeterminate.

For any $\nu \in \bb{Q}$, Oblomkov--Yun construct a finite-dimensional subvariety $L\fr{g}_\nu^\rs \subseteq L\fr{g}$, stable under a $\bb{G}_m$-action on $L\fr{g}$ that depends on $\nu$ and a choice of simple roots.
The superscript $\rs$ is meant to connote regular semisimple elements.
We write $\Br_{\nu, \gamma}$ for the fundamental group of $L\fr{g}_\nu^\rs$ with basepoint $\gamma$.
Recall that if $\bbb{T}$ is a maximal torus of $\bbb{G}$ that is $\Phi_m$-split as a Levi, where $m$ is the denominator of $\nu$, then $\Br_{\nu, \gamma}$ is the braid group of $W^\bbb{G}_{\bbb{T}, 1}$.

Let $D_{\bbb{G}, \bbb{A}}^\rat(\vec{\nu})$ be the rational DAHA of $(\bbb{G}, \bbb{A})$ with parameter $\vec{\nu}$ depending on $\nu$ as in~\cite[\S{4.2}]{oy}.
(Note that some of our variable names do not match theirs.)
When $\bbb{G}$ is split, $\nu > 0$, and $m$ is a \dfemph{regular elliptic number} for the group $W^\bbb{G}_\bbb{A}$~\cite[\S{1.1}]{vv}, Oblomkov--Yun construct a local system of bigraded vector spaces over $L\fr{g}_\nu^\rs$, which we will denote by $\cal{E}_\nu$, from the $\bb{G}_m$-equivariant cohomology of a certain affine Springer fibration, together with its \dfemph{perverse filtration}.
They construct a fiberwise $D_{\bbb{G}, \bbb{A}}^\rat(\vec{\nu})$-action on $\cal{E}_\nu$, commuting with the action of $\Br_{\nu, \gamma}$ on $\cal{E}_{\nu, \gamma}$ by monodromy.
The $\Br_{\nu, \gamma}$-invariants of $\cal{E}_{\nu, \gamma}$ form the simple $D_{\bbb{G}, \bbb{A}}^\rat(\vec{\nu})$-module often denoted $L_{\vec{\nu}}(1)$.
We write $[\cal{E}_{\nu, \gamma}]$ for the virtual graded bimodule formed from $\cal{E}_{\nu, \gamma}$ by taking the alternating sum over cohomological degrees:
See \eqref{eq:bimod-1}.

To relate our next conjecture to \eqref{eq:trinh}, recall from~\cite{bmm} that for any generic character $\rho \in \Uch(\bbb{G})_{\bbb{T}, 1}$ and primitive $m$th root of unity $\zeta$, the generic degree $\Deg_\rho(x) \in \bb{Q}[x]$ satisfies 
\begin{align}\label{eq:deg-intro}
\Deg_\rho(\zeta) = \varepsilon_{\bbb{T}, 1}^\bbb{G}(\rho) \deg \chi_{\bbb{T}, 1}^\bbb{G}(\rho)
	\quad\text{for some sign $\varepsilon_{\bbb{T}, 1}^\bbb{G}(\rho) \in \{\pm 1\}$}.
\end{align}
We also write $\chi^\bbb{G}_{\bbb{T}, 1, 1}$ for the character of $H^\bbb{G}_{\bbb{T}, 1}(1)$ corresponding to a character $\chi$ of $W^\bbb{G}_{\bbb{T}, 1}$ under the decomposition map:
See \S\ref{subsec:decomp}--\ref{subsec:bij}.

\begin{mainconj}\label[mainconj]{mainconj:2}
Let $\bbb{G}, \bbb{A}, \nu, m, \bbb{T}$ be as above, with $\nu > 0$ and $m$ a regular elliptic number for $W^\bbb{G}_\bbb{A}$, and let $\gamma \in L\fr{g}_\nu^\rs(\bb{C})$.
Assume that either $\bbb{G}$ is split or~\cite[Conj.\ 8.2.5]{oy} holds. 
Then:
\begin{enumerate}
\item 	The $\Br_{\nu, \gamma}$-action on $\cal{E}_{\nu, \gamma}$ factors through $H^\bbb{G}_{\bbb{T}, 1}(1)$.

\item 	In the Grothendieck group $\sf{K}_0(\bb{C}W^\bbb{G}_\bbb{A} \otimes H^\bbb{G}_{\bbb{T}, 1}(1)^\op)[\![t]\!][t^{-1}]$, we have
\begin{align}
[\cal{E}_{\nu, \gamma}] = \sum_{\rho \in \Uch(\bbb{G})_{\bbb{A}, 1} \cap \Uch(\bbb{G})_{\bbb{T}, 1}}
	\varepsilon_{\bbb{T}, 1}^\bbb{G}(\rho)
	[\Delta_{\vec{\nu}}(\chi_{\bbb{A}, 1}^\bbb{G}(\rho)) \otimes \chi_{\bbb{T}, 1, 1}^\bbb{G}(\rho)],
\end{align}
		where $\Delta_{\vec{\nu}}(\chi)$ denotes the standard module of $D_{\bbb{G}, \bbb{A}}^\rat(\vec{\nu})$ indexed by $\chi$, and the variable $t$ tracks its $W^\bbb{G}_\bbb{A}$-equivariant Euler grading.
		
\end{enumerate}
\end{mainconj}

In the split case, \Cref{mainconj:2} refines \eqref{eq:trinh}, via \eqref{eq:deg-intro}.
The refined formula bears a remarkable analogy with a virtual bimodule that, under the Brou\'e--Malle--Michel conjectures, can be constructed from Deligne--Lusztig representations:
\begin{align}\label{eq:analogy}
\begin{array}{lll}
\textsf{bimodule}
	&\textsf{algebras}
	&\textsf{parameters}\\
	\hline
\cal{E}_{\nu, \gamma}
	&(D_{\bbb{G}, \bbb{A}}^\rat(\vec{\nu}), H^\bbb{G}_{\bbb{T}, 1}(1))
	&(e^{2\pi i\nu}, 1)\\
R_L^G(\lambda_q) \otimes_{\QL G^F} R_M^G(\mu_q)
	&(H^\bbb{G}_{\bbb{L}, \lambda}(q), H^\bbb{G}_{\bbb{M}, \mu}(q))
	&(q, q)
\end{array}
\end{align}
Above, $q > 1$ is a prime power; $G, L, \lambda_q, M, \mu_q$ are the finite-group data arising from $q, \bbb{G}, \bbb{L}, \lambda, \bbb{M}, \mu$; and $R_L^G(\lambda_q), R_M^G(\mu_q)$ are the compactly-supported cohomologies of appropriate Deligne--Lusztig varieties.
The top right entry alludes to how $D_{\bbb{G}, \bbb{A}}^\rat(\vec{\nu})$ provides a highest-weight cover of $H^\bbb{G}_{\bbb{A}, 1}(e^{2\pi i\nu})$.
\Cref{sec:asf} gives more details.

\subsection{}

Most work on the Brou\'e--Malle conjecture relating $H^\bbb{G}_{\bbb{L}, \lambda}(x)$ to Lusztig induction has focused on the case where $\bbb{L}$ is a maximal torus of $\bbb{G}$.
Beyond tori, the work of Dudas in~\cite{dudas} implicitly confirms the prediction of~\cite[\S{2.10}]{bm_93} when $\bbb{G}$ is a generic general linear group and $W^\bbb{G}_{\bbb{L}, \lambda}$ is cyclic.


In \Cref{sec:gl-gu}, we describe the Hecke algebras of~\cite{bm_93} explicitly in the general linear cases $\bbb{G} = \GGGL_n$ and the general unitary cases $\bbb{G} = \GGGU_n$.
Here, the groups $W^\bbb{G}_{\bbb{L}, \lambda}$ are wreath products of the form $\bb{Z}_e \wr S_a \vcentcolon= (\bb{Z}/e\bb{Z})^a \rtimes S_a$.
Thus the Hecke algebras $H^\bbb{G}_{\bbb{L}, \lambda}(x)$  are specializations of Ariki--Koike algebras, whose blocks at roots of unity were described combinatorially by Lyle--Mathas~\cite{lm}.
At the conclusion of \Cref{sec:gl-gu}, we use their work to prove:

\begin{mainthm}\label[mainthm]{mainthm:1}
In the setup of \Cref{mainconj:1}, suppose that $\bbb{G} = \GGGL_n$ or $\GGGU_n$ and that $e, m$ are coprime.
Then (I)--(II) hold.
In fact, $\Irr(W^\bbb{G}_{\bbb{L}, \lambda})_{\bbb{M}, \mu}$ and $\Irr(W^\bbb{G}_{\bbb{M}, \mu})_{\bbb{L}, \lambda}$ correspond to a single $H^\bbb{G}_{\bbb{L}, \lambda}(\zeta_m)$-block and a single $H^\bbb{G}_{\bbb{M}, \mu}(\zeta_e)$-block, respectively.
\end{mainthm}

For the general linear cases, we will further relate the bijections in parts (II)--(III) of \Cref{mainconj:1} to bijections that previously appeared in Uglov's work on \dfemph{higher-level Fock spaces}~\cite{uglov}.

Recall that for each tuple $\vec{s} \in \bb{Z}^e$, the $v$-deformed Fock space of level $e$ and charge $\vec{s}$ is the vector space $\Lambda_v^{\vec{s}}$ over $\bb{Q}(v)$ spanned by symbols $|\vec{\lambda}, \vec{s}\rangle$, where $\vec{\lambda}$ runs over $e$-tuples of integer partitions, or \dfemph{$e$-partitions}.
For each integer $m > 0$, it may be viewed as a module over the quantum affine algebra $\ur{U}_v(\widehat{\fr{sl}}_m)$, in which case the residue of $\vec{s}$ modulo $m$ describes the highest weight of the simple submodule generated by $|\vec{0}, \vec{s}\rangle$.
Generalizing the work of Leclerc--Thibon on the level-$1$ case, Uglov constructed a canonical basis for $\Lambda_v^{\vec{s}}$, related to the standard basis by an upper-triangular transition matrix of affine Kazhdan--Lusztig polynomials.
To do so, he made use of vector-space isomorphisms
\begin{align}
\bigoplus_{\substack{\vec{s} \in \bb{Z}^e \\ s_1 + \cdots + s_e = s}}
	\Lambda_v^{\vec{s}}
	\xleftarrow{\sim} \Lambda_v^s
	\xrightarrow{\sim} \bigoplus_{\substack{\vec{r} \in \bb{Z}^m \\ r_1 + \cdots + r_m = s}} \Lambda_v^{\vec{r}}
\end{align}
for each integer $s$, relating level-$e$ and level-$m$ Fock spaces to the level-$1$ Fock space of charge $s$.
Surprisingly, these isomorphisms are not defined symmetrically.
Together, they define an interesting bijection between charged $e$-partitions and charged $m$-partitions.
In~\cite{cm_12}, this bijection is described as a \dfemph{level-rank duality} in the sense of Frenkel~\cite{frenkel}.

As already observed in~\cite{uglov}, the left-hand map is essentially induced by the map sending an integer partition to its $e$-core and $e$-quotient.
The right-hand map is a modification of the analogous map for $m$.
It turns out that for $\bbb{G} = \GGGL_n$, where $\Uch(\bbb{G})$ is indexed by partitions of $n$, the $\Phi_e$-Harish-Chandra series of a unipotent irreducible character is indexed by the $e$-core of its partition, while its image under \eqref{eq:chi-intro} is indexed by a shifted version of its $e$-quotient.
This similarity of structure motivates the following result, proved in \Cref{sec:fock}:

\begin{mainthm}\label[mainthm]{mainthm:2}
Let $\Pi$ be the set of all integer partitions.
In the setup of \Cref{mainthm:1} for $\bbb{G} = \GGGL_n$, the maps of \eqref{eq:chi-double-intro} fit into a commutative diagram
\begin{equation}
\begin{tikzpicture}[baseline=(current bounding box.center), >=stealth]
\matrix(m)[matrix of math nodes, row sep=3em, column sep=3em, text height=2ex, text depth=0.5ex]
{ 		
	\Irr(W^\bbb{G}_{\bbb{L}, \lambda})
		&\Uch(\bbb{G})_{\bbb{L}, \lambda} \cap \Uch(\bbb{G})_{\bbb{M}, \mu}
		&\Irr(W^\bbb{G}_{\bbb{M}, \mu})\\
	\Pi^e \times \bb{Z}^e
		&\Pi
		&\Pi^m \times \bb{Z}^m\\	
	\Pi^e \times \bb{Z}^e
		&
		&\Pi^m \times \bb{Z}^m\\
};
\path[->, font=\scriptsize, auto]
(m-1-2)	
	edge node[above]{$\chi_{\bbb{L}, \lambda}^\bbb{G}$} (m-1-1)
	edge node{$\chi_{\bbb{M}, \mu}^\bbb{G}$} (m-1-3)
	edge (m-2-2)
(m-1-1)
	edge (m-2-1)
(m-1-3)
	edge (m-2-3)
(m-2-2)
	edge node[above]{$\Upsilon_e^1(|\rho,  \ell_\lambda\rangle) \mapsfrom \rho$} (m-2-1)
	edge node{$\rho \mapsto \Upsilon_m^1(|\rho,  \ell_\mu\rangle)$} (m-2-3)
(m-2-1)
	edge node[left]{$\tilde{w}_{e, m, s}$} (m-3-1)
(m-2-3)
	edge node{$\tilde{w}_{m, e, t}$} (m-3-3)
(m-3-1)
	edge node{$\Upsilon_m^e$} (m-3-3);
\end{tikzpicture}
\end{equation}
where the integers $\ell_\lambda, \ell_\mu$ are the respective lengths of $\lambda, \mu$; the maps $\tilde{w}_{e, m, s}, \tilde{w}_{m, e, t}$ arise from affine permutations via \eqref{eq:affine-perm}; the integers $s, t$ need only satisfy $\ell_\lambda - s = \ell_\mu - t$; and the map $\Upsilon_m^e$ is essentially Uglov's bijection in~\cite[\S{4}]{uglov}, as explained in \S\ref{subsec:uglov}.
All maps in the diagram are injective or bijective.

Furthermore, the diagram is compatible with Lusztig induction between $\Phi_e$- and $\Phi_m$-split Levis, in a precise sense corresponding to \Cref{thm:bmm}(2a).
\end{mainthm}

Chuang--Miyachi conjectured that Uglov's bijections could be categorified by Koszul-type equivalences\footnote{More precisely, compositions of commuting Koszul and Ringel equivalences: See~\cite[\S{3.5}]{svv}.\label{foot:koszul-ringel}} between blocks of highest-weight covers for Ariki--Koike algebras, \emph{i.e.}, blocks of categories O for cyclotomic rational DAHAs~\cite{cm_12}.
This categorical level-rank duality was proved by Rouquier--Shan--Varagnolo--Vasserot in~\cite{rsvv, svv}, relying on equivalences between such categories O and truncations of the parabolic categories O of the affine Lie algebras $\widehat{\fr{sl}}_e$.
The latter equivalences were proved by Losev~\cite{losev} and Rouquier--Shan--Varagnolo--Vasserot~\cite{rsvv} independently.
Using these results, we deduce:

\begin{maincor}\label{maincor:1}
In the setup of \Cref{mainconj:1}, suppose that $\bbb{G} = \GGGL_n$ and that $e, m$ are coprime.
Then (III) holds.
\end{maincor}

\subsection{}

In \Cref{sec:exceptional}, we give evidence for \Cref{mainconj:1} in exceptional types. 
	More precisely, 
	we verify the partitioning of sets in (I), and the agreement of cardinalities implied by (II), though not the bijections:
	See \Cref{prop:exceptional}.
	Note that by \Cref{rem:singular}, it suffices to check pairs of \dfemph{singular numbers} $e, m$.
	For (split) $\bbb{G}$ of type $G_2$ or $F_4$, we give the explicit details for all such pairs.

\subsection{Future Work}

We will address the following generalizations in a sequel:
\begin{itemize}
\item 	We expect to lift the hypothesis about $e, m$ being coprime from \Cref{mainthm:1}.
		In this generality, $\Irr(W^\bbb{G}_{\bbb{L}, \lambda})_{\bbb{M}, \mu}$ and $\Irr(W^\bbb{G}_{\bbb{M}, \mu})_{\bbb{L}, \lambda}$ can each correspond to multiple blocks, though the same number for each.

\item 	We expect precise analogues of \Crefrange{mainthm:1}{mainthm:2} and \Cref{maincor:1} for reductive groups in types $B$, $C$, $D$.

\end{itemize}

The bimodule in \Cref{mainconj:2} also seems closely related to work of Boixeda Alvarez--Losev~\cite{bl}.
Using the same setup as Oblomkov--Yun, but at an \emph{integral} slope $d$ rather than a regular elliptic slope, they construct a $(D_{\bbb{G}, \bbb{A}}^\trig(d), D_{\bbb{G}, \bbb{A}}^\trig(0))$-bimodule, where $D_{\bbb{G}, \bbb{A}}^\trig(\nu)$ denotes the \emph{trigonometric} DAHA rather than the rational one.
This seems to add a third row to the analogy in \eqref{eq:analogy}.
In a separate future paper, we will address evidence toward an analogue of \Cref{mainconj:2}(2) for the bimodule in~\cite{bl}.

\subsection{Acknowledgments}

We thank Gunter Malle for especially thorough comments and corrections.
We also thank Pablo Boixeda Alvarez, Olivier Dudas, Ivan Losev, George Lusztig, Andrew Mathas, Jean Michel, Kari Vilonen, Ben Webster, and Zhiwei Yun for useful comments, and Ian Grojnowski and Rapha\"el Rouquier for their kind interest.
During the preparation of this work, MT was supported by an NSF Mathematical Sciences Research Fellowship, Award DMS-2002238. TX thanks the support of ARC grant FL200100141.

\section{\texorpdfstring{$\Phi$-Harish-Chandra Series}{Φ-Harish-Chandra Series}}\label{sec:hc}

\subsection{}

For any associative algebra $H$ of finite type over a field, we write $\sf{Rep}(H)$ to mean the category of finitely-generated $H$-modules, and $\sf{K}_0(H)$ to mean the Grothendieck group of $\sf{Rep}(H)$.
For any finite group $\Gamma$, we set $\sf{K}_0(\Gamma) = \sf{K}_0(\bb{K}\Gamma)$, where $\bb{K}$ is any splitting field for $\Gamma$ of characteristic zero.
We will often identify isomorphism classes of representations of $\Gamma$ with their characters.
We write $\Irr(\Gamma)$ for the set of irreducible characters of $\Gamma$.
We will often write $1$ to denote the trivial character when $\Gamma$ can be inferred from context.

\subsection{}\label{subsec:ind-res}

Consider a prime power $q > 1$ and a connected, reductive algebraic group $G$ over $\bar{\bb{F}}_q$, equipped with a $q$-Frobenius map $F: G \to G$ defining an $\bb{F}_q$-structure.
For any $F$-stable Levi subgroup $L \subseteq G$, Lusztig introduced induction and restriction homomorphisms~\cite{lusztig_76}:
\begin{align}
R_L^G : \sf{K}_0(L^F) \rightleftarrows \sf{K}_0(G^F) : {}^\ast R_L^G.
\end{align}
For any parabolic subgroup $P \subseteq G$ containing $L$, not necessarily $F$-stable, the induction homomorphism $R_L^G$ may be defined using $\ur{H}_c^\ast(Y_{L \subseteq P}^G)$, where $Y_{L \subseteq P}^G$ is an algebraic variety over $\bar{\bb{F}}_q$ equipped with commuting actions of $G^F$ and $L^F$.
Here, we have written $\ur{H}_c^\ast(Y)$ to denote the compactly-supported $\ell$-adic cohomology of $Y_{\bar{\bb{F}}_q}$, where $\ell$ is invertible in $\bb{F}_q$.
The restriction homomorphism ${}^\ast R_L^G$ is defined as the right adjoint of $R_L^G$.
Bonnaf\'e, extending work of Lusztig, showed that $R_L^G, {}^\ast R_L^G$ do not depend on $P$ when $q$ is large enough~\cite{bo_98}.
For a more detailed exposition of Lusztig induction and restriction, see the second edition of the book~\cite{dm} by Digne--Michel, or the book~\cite{gm} by Geck--Malle.

The variety $Y_{L \subseteq P}^G$ forms an $L^F$-torsor over a $G^F$-variety $X_{L \subseteq P}^G$.
An irreducible character of $G^F$ is \dfemph{unipotent} if and only if it occurs in $\ur{H}_c^\ast(X_{T \subseteq B}^G)$ for some $F$-stable maximal torus $T$ and Borel $B$, or equivalently, in $ R_T^G(1)$ for some such $T$.
Lusztig observed that while $\Irr(G^F)$ grows in size with $q$, the subset of unipotent irreducible characters $\Uch(G^F)$ can be indexed in a way depending only on the root system of $G$ and its Frobenius action, not on $q$ itself~\cite{lusztig_78, lusztig_84}.

\begin{ex}\label[ex]{ex:principal}
Suppose that $A \subseteq G$ is a maximally $F$-split maximal torus, and that $B$ is an $F$-stable Borel containing $A$.
Then $Y_{A \subseteq B}^G = G^F/U^F$, where $U$ is the unipotent radical of $B$, and $X_{A \subseteq B}^G = G^F/B^F$.
The representation $R_A^G(1)$ is just $\ur{H}^0(G^F/B^F)$, the space of $\QL$-valued functions on $G^F/B^F$.
The irreducible constituents of $R_A^G(1)$ form a subset of $\Uch(G^F)$ called the \dfemph{unipotent principal series}, parametrized by the irreducible characters of the group $W^{G^F}_{A^F} \vcentcolon= N_{G^F}(A^F)/A^F$.
Later, we will discuss how to generalize this parametrization.
\end{ex}

\subsection{}

Brou\'e--Malle~\cite{bm_92} introduced generic finite reductive groups in order to study properties of the homomorphisms $R_L^G, {}^\ast R_L^G$ and the sets $\Uch(G^F)$ that only depend on $q$ through specializations of an indeterminate variable.
A \dfemph{generic finite reductive group} $\bbb{G}$, or \dfemph{generic group} for short, consists of:
\begin{enumerate}
\item 	A root datum $\Gamma_\bbb{G} = (X, R, X^\vee, R^\vee)$.

\item 	A coset of the form $[f]_\bbb{G} \vcentcolon= W_{\Gamma_\bbb{G}}f \subseteq \Aut(\Gamma_\bbb{G})$, where $W_{\Gamma_\bbb{G}}$ is the Weyl group of $\Gamma_\bbb{G}$ and $f$ a finite-order automorphism of $\Gamma_\bbb{G}$ normalizing $W_{\Gamma_\bbb{G}}$.
		We say that $\bbb{G}$ is \dfemph{split} if and only if $[f]_\bbb{G} = W_{\Gamma_\bbb{G}}$.

\end{enumerate}
An \dfemph{isomorphism} between generic groups is an isomorphism between the root data in (1), matching the coset data in (2).

We say that a generic group $\bbb{G}'$ is a \dfemph{generic subgroup} of $\bbb{G}$ if and only if its cocharacter lattice embeds into that of $\bbb{G}$, its root system embeds into that of $\bbb{G}$ as a parabolic subsystem, and $[f]_{\bbb{G}'} \subseteq [f]_\bbb{G}$.
In this case, we write $\bbb{G}' \leq \bbb{G}$.
To indicate that $\bbb{G}' \neq \bbb{G}$ as well, we write $\bbb{G}' < \bbb{G}$.

\begin{rem}\label{assump}
Our definitions exclude any generic group $\bbb{G}$ with a generic subgroup of the \dfemph{Suzuki} type ${}^2C_2$ or the \dfemph{Ree} types ${}^2G_2, {}^2F_4$.
Here, the letter and subscript indicate the root datum, while the superscript is the minimal order among elements of the coset datum.
\end{rem}

\subsection{}

Henceforth, $\bbb{G}$ is always a generic group.

Any choice of prime power $q > 1$ and representative $f \in [f]_\bbb{G}$ gives rise to a tuple $(G, T, F)$, where $G$ is a connected, reductive algebraic group over $\bar{\bb{F}}_q$ with $q$-Frobenius map $F$, as in \S\ref{subsec:ind-res}, and $T \subseteq G$ is an $F$-stable maximal torus.
Every tuple $(G, T, F)$ (such that $G^F$ contains no Suzuki or Ree subgroups) arises from a generic group in this way.
We set 
\begin{align}
\bbb{G}(q) = G^F.
\end{align}
If $(G', T', F')$ is the tuple arising from a generic subgroup $\bbb{G}' \leq \bbb{G}$, then we can embed $G'$ into $G$ so that $T' = T$ and $G'$ is $F$-stable and $F' = F|_{G'}$.
In this way, $\bbb{G}'(q)$ forms a subgroup of $\bbb{G}(q)$.

As an abstract group, $\bbb{G}(q)$ only depends on $q$ and $\bbb{G}$.
The orders of the groups $\bbb{G}(q)$ are generic, in the sense that there is a polynomial $|\bbb{G}|(x) \in \bb{Q}[x]$ such that $|\bbb{G}|(q) = |\bbb{G}(q)|$ for all $q$. 

\subsection{}\label{subsec:levi}

A \dfemph{Levi subgroup} of $\bbb{G}$ is a generic subgroup $\bbb{L} \leq \bbb{G}$ whose cocharacter lattice is the same as that of $\bbb{G}$.
For such $\bbb{L}$, the \dfemph{relative Weyl group} 
\begin{align}
W^\bbb{G}_\bbb{L} 
\vcentcolon= N_{W_{\Gamma_\bbb{G}}}([f]_\bbb{L})/W_{\Gamma_\bbb{L}}
\simeq (N_{W_{\Gamma_\bbb{G}}}(W_{\Gamma_\bbb{L}})/W_{\Gamma_\bbb{L}})^{[f]_\bbb{L}}
\end{align}
is a complex reflection group, isomorphic to $W^{\bbb{G}(q)}_{\bbb{L}(q)} \vcentcolon= N_{\bbb{G}(q)}(\bbb{L}(q))/\bbb{L}(q)$ for all $q$.
Note that the definition of $W^\bbb{G}_\bbb{L}$ in~\cite[75]{broue} has a typo.

A \dfemph{generic torus} is a generic group $\bbb{T}$ whose root and coroot lattices are empty.
For such $\bbb{T}$, the group $W_{\Gamma_\bbb{T}}$ is trivial, so we can write $[f]_\bbb{T} = \{f_\bbb{T}\}$.
The order of $\bbb{T}$ satisfies $|\bbb{T}|(x) = \det(x - f_\bbb{T} \mid X^\vee)$, where $X^\vee$ is the cocharacter lattice of $\bbb{T}$.
The orbit of $f_\bbb{T}$ in $[f]_\bbb{G}$ under conjugation by $W_{\Gamma_\bbb{G}}$ is called the \dfemph{type} of $\bbb{T}$ in $\bbb{G}$.
A \dfemph{subtorus} of $\bbb{G}$ is a generic subgroup $\bbb{T} \leq \bbb{G}$ given by a generic torus.

Every subtorus $\bbb{T} \leq \bbb{G}$ defines a Levi subgroup $Z_\bbb{G}(\bbb{T}) \leq \bbb{G}$, called its \dfemph{centralizer}.
The roots of $Z_\bbb{G}(\bbb{T})$ are the roots of $\bbb{G}$ 
orthogonal to the cocharacter lattice of $\bbb{T}$, and $[f]_{Z_\bbb{G}(\bbb{T})} = W_{\Gamma_{Z_\bbb{G}(\bbb{T})}}f_\bbb{T}$.

\subsection{}\label{subsec:relative Weyl}

There is a set $\Uch(\bbb{G})$ that indexes the sets $\Uch(\bbb{G}(q))$ uniformly in $q$.
Its construction is functorial with respect to isomorphisms between generic groups.
The degrees of the unipotent irreducible characters of the groups $\bbb{G}(q)$ are generic, in the sense that there is a polynomial $\Deg_\rho(x) \in \bb{Q}[x]$ for each $\rho \in \Uch(\bbb{G})$ such that $\Deg_\rho(q) = \deg \rho_q$ for all $q$, where $\rho_q \in \Uch(\bbb{G}(q))$ corresponds to $\rho$.

The maps $R_L^G, {}^\ast R_L^G$  have generic versions for unipotent characters.
If $G, F$ arise from $\bbb{G}$, and $L$ from $\bbb{L}$ for some Levi subgroup $\bbb{L} \leq \bbb{G}$, then we have maps
\begin{align}
R_\bbb{L}^\bbb{G} : \bb{Z}\Uch(\bbb{L}) \rightleftarrows \bb{Z}\Uch(\bbb{G}) : {}^\ast R_\bbb{L}^\bbb{G}
\end{align}
that recover Lusztig's maps on $\bb{Z}\Uch(G^F)$ and $\bb{Z}\Uch(L^F)$.
The maps $R_\bbb{L}^\bbb{G}, {}^\ast R_\bbb{L}^\bbb{G} $ are moreover compatible with isomorphisms between Levi subgroups induced by conjugation by $W_{\Gamma_\bbb{G}}$.

\subsection{}

Motivated by observations in the $\ell$-modular representation theory of $G^F$ for large primes $\ell$, Brou\'e--Malle--Michel used generic groups to formulate a generalization of Harish-Chandra theory, depending on an integer $e > 0$ by way of the cyclotomic polynomial $\Phi_e(x) \in \bb{Z}[x]$~\cite[\S{3}]{bmm}.
(For a fixed prime power $q > 1$, they take $e$ to be the order of $q$ in $\bb{F}_\ell$.)

As preparation:
We say that a generic torus $\bbb{T}$ is a \dfemph{$\Phi_e$-torus} if and only if $|\bbb{T}|(x)$ is a power of $\Phi_e(x)$.
We say that a Levi subgroup $\bbb{L} \leq \bbb{G}$ is \dfemph{$\Phi_e$-split} if and only if $\bbb{L} = Z_\bbb{G}(\bbb{T})$ for some $\Phi_e$-subtorus $\bbb{T} \leq \bbb{G}$.
In this case, we say that $\lambda \in \Uch(\bbb{L})$ is \dfemph{$\Phi_e$-cuspidal} if and only if ${}^\ast R_\bbb{M}^\bbb{L}(\lambda) = 0$ for any smaller $\Phi_e$-split Levi $\bbb{M} < \bbb{L}$.
Then we also say that $(\bbb{L}, \lambda)$ is a \dfemph{$\Phi_e$-cuspidal pair} for $\bbb{G}$.

For general $\bbb{L}$, there is a $W^\bbb{G}_\bbb{L}$-action on $\Uch(\bbb{L})$, arising from $W^{\bbb{G}(q)}_{\bbb{L}(q)}$-actions on the sets $\Irr(\bbb{L}(q))$ that stabilize their respective subsets $\Uch(\bbb{L}(q))$.
For all $\lambda \in \Uch(\bbb{L})$, we define its \dfemph{relative Weyl group} $W^\bbb{G}_{\bbb{L}, \lambda}$ to be the centralizer of $\lambda$ in $W^\bbb{G}_\bbb{L}$. 
It turns out that if $(\bbb{L}, \lambda)$ is an $\Phi_e$-cuspidal pair, then $W^\bbb{G}_{\bbb{L}, \lambda}$ is again a complex reflection group~\cite[Thm.\ 5.7]{broue}.

\begin{rem}\label[rem]{rem:lambda-indep}
For irreducible $\Gamma_\bbb{G}$ and any $e > 0$, the possible relative Weyl groups for the $\Phi_e$-cuspidal pairs of $\bbb{G}$ are listed in~\cite[\S{3}]{bm_93}.
For such $\bbb{G}$, case-by-case analysis shows that we always have $W^\bbb{G}_{\bbb{L}, \lambda} = W^\bbb{G}_\bbb{L}$, except when $\bbb{G}$ is of untwisted type $D_n$ for $n$ even or of type $E_7$.
See pages 48--53 and Table 1 in \cite{bmm}.
\end{rem}

\subsection{}

For any $\Phi_e$-cuspidal pair $(\bbb{L}, \lambda)$ of $\bbb{G}$, the corresponding \dfemph{$\Phi_e$-Harish-Chandra series} of $\Uch(\bbb{G})$ is the set
\begin{align}
\Uch(\bbb{G})_{\bbb{L}, \lambda} = \{\rho \in \Uch(\bbb{G}) \mid \text{$\rho$ occurs in $R_\bbb{L}^\bbb{G}(\lambda)$}\}.
\end{align}
The action of $W_{\Gamma_\bbb{G}}$ on the set of Levi subgroups extends to an action on the set of $\Phi_e$-cuspidal pairs.
The $\Phi_e$-Harish-Chandra series of a pair only depends on its $W_{\Gamma_\bbb{G}}$-orbit.

Let $\sf{HC}_e(\bbb{G})$ be the set of $W_{\Gamma_\bbb{G}}$-orbits of $\Phi_e$-cuspidal pairs.
Below, items (1) and (2a)--(2c) comprise~\cite[Thm.\ 3.2]{bmm}, while item (2d) follows from~\cite[Thm.\ 5.24]{bmm}:
See~\cite[75--76]{broue}.

\begin{thm}[Brou\'e--Malle--Michel]\label[thm]{thm:bmm}
For any integer $e > 0$:
\begin{enumerate}
\item
	The $\Phi_e$-Harish-Chandra series form a partition of $\Uch(\bbb{G})$:
	That is,
	\begin{align}
	\Uch(\bbb{G}) = \coprod_{[\bbb{L}, \lambda] \in \sf{HC}_e(\bbb{G})} \Uch(\bbb{G})_{\bbb{L}, \lambda}.
	\end{align}
		
\item
	For any $\Phi_e$-cuspidal pair $(\bbb{L}, \lambda)$ of $\bbb{G}$, there is a map
	\begin{align}
	(\varepsilon_{\bbb{L}, \lambda}^\bbb{G}, \chi_{\bbb{L}, \lambda}^\bbb{G}) : \Uch(\bbb{G})_{\bbb{L}, \lambda} \to \{\pm 1\} \times \Irr(W^\bbb{G}_{\bbb{L}, \lambda}),
	\end{align}
	such that $\varepsilon_{\bbb{L}, \lambda}^\bbb{G} \chi_{\bbb{L}, \lambda}^\bbb{G} : \bb{Z}\Uch(\bbb{G})_{\bbb{L}, \lambda} \to \bb{Z}\Irr(W^\bbb{G}_{\bbb{L}, \lambda})$ is an isometry with respect to appropriate multiplicity pairings.
	In particular, the map $\chi_{\bbb{L}, \lambda}^\bbb{G}$ is bijective.
	Moreover:
		\begin{enumerate}	
		\item 	For any $\Phi_e$-split Levi $\bbb{M}$ with $\bbb{L} \leq \bbb{M} \leq \bbb{G}$, we have the following commutative diagram, in which the horizontal arrows are induced by linearity:
		\begin{equation}
\begin{tikzpicture}[baseline=(current bounding box.center), >=stealth]
\matrix(m)[matrix of math nodes, row sep=3em, column sep=4em, text height=2ex, text depth=0.5ex]
{ 		
	\bb{Z}\Uch(\bbb{M})_{\bbb{L}, \lambda}
		&\bb{Z}\Irr(W^\bbb{M}_{\bbb{L}, \lambda})\\	
	\bb{Z}\Uch(\bbb{G})_{\bbb{L}, \lambda}
		&\bb{Z}\Irr(W^\bbb{G}_{\bbb{L}, \lambda})\\	
};
\path[->, font=\scriptsize, auto]
(m-1-1)		edge node{$\varepsilon_{\bbb{L}, \lambda}^\bbb{M} \chi_{\bbb{L}, \lambda}^\bbb{M}$} (m-1-2)
(m-1-1)		edge node[left]{$R_\bbb{M}^\bbb{G}$} (m-2-1)
(m-1-2)		edge node{$\Ind_{W^\bbb{M}_{\bbb{L}, \lambda}}^{W^\bbb{G}_{\bbb{L}, \lambda}}$} (m-2-2)
(m-2-1)		edge node{$\varepsilon_{\bbb{L}, \lambda}^\bbb{G} \chi_{\bbb{L}, \lambda}^\bbb{G}$} (m-2-2);
\end{tikzpicture}
\end{equation}
				
		\item 	The collection of maps $(\varepsilon_{\bbb{L}, \lambda}^\bbb{G}, \chi_{\bbb{L}, \lambda}^\bbb{G})$ is stable under the action of $W_{\Gamma_\bbb{G}}$ on the set of $\Phi_e$-cuspidal pairs.
		
		\item 	$\chi_{\bbb{G}, \lambda}^\bbb{G}(\lambda)$ is the trivial character of the trivial group $W^\bbb{G}_{\bbb{G}, \lambda}$.
				
		\item 	If $\rho \in \Uch(\bbb{G})$ satisfies $\Deg_\rho(x) \not\equiv 0 \pmod{\Phi_e(x)}$, then $\rho \in \Uch(\bbb{G})_{\bbb{T}, 1}$, where $\bbb{T} \leq \bbb{G}$ is a $\Phi_e$-split maximal torus.
				In this case,
		\begin{align}
		\Deg_\rho(x) \equiv \varepsilon_{\bbb{L}, \lambda}^\bbb{G}(\rho) \deg \chi_{\bbb{L}, \lambda}^\bbb{G}(\rho)
			\pmod{\Phi_e(x)}.
		\end{align}

		\end{enumerate}
\end{enumerate}
\end{thm}

\begin{ex}\label[ex]{ex:principal-generic}
The generic version of \Cref{ex:principal} consists of a generic group $\bbb{G}$ and a $\Phi_1$-split maximal torus $\bbb{A} \leq \bbb{G}$.
If $G, F, A$ arise from $q, \bbb{G}, \bbb{A}$, then $A$ is a maximally $F$-split maximal torus of $G$, and $\Uch(\bbb{G})_{\bbb{A}, 1}$ parametrizes the unipotent principal series of $G^F$.
Moreover, the endomorphism algebra $H^G_{A, 1}(q) \vcentcolon= \End_{\QL G^F}(R_A^G(1))$ is isomorphic to $\QL W^\bbb{G}_\bbb{A}$.

Let $\rho_q \in \Uch(G^F)$ correspond to $\rho \in \Uch(\bbb{G})_{\bbb{A}, 1}$.
For all $\rho \in \Uch(\bbb{G})_{\bbb{A}, 1}$, the sign $\varepsilon_{\bbb{A}, 1}^\bbb{G}(\rho)$ is positive.
The map $\chi_{\bbb{A}, 1}^\bbb{G} : \Uch(\bbb{G})_{\bbb{A}, 1} \to \Irr(W^\bbb{G}_\bbb{A})$ is determined by the existence of a $(G^F, H^G_{A, 1}(q))$-bimodule isomorphism
\begin{align}
R_A^G(1) = \bigoplus_{\rho \in \Uch(\bbb{G})_{\bbb{A}, 1}}
	\rho_q \otimes \chi_{\bbb{A}, 1, q}^\bbb{G}(\rho),
\end{align}
where $\chi_{\bbb{A}, 1, q}^\bbb{G}(\rho)$ corresponds to $\chi_{\bbb{A}, 1}^\bbb{G}(\rho)$ under $H^G_{A, 1}(q) \simeq \QL W^\bbb{G}_\bbb{A}$.
\end{ex}

\subsection{Regular Numbers}\label{subsec:regular}

The existence of a $\Phi_e$-split maximal torus constrains $e$.
To explain how this works, we review some notions due to Springer, following~\cite[\S{5B}]{bmm_99}.

Suppose that $C$ is a complex reflection group with reflection representation $V$ over $\bb{C}$, and that $f$ is a finite-order automorphism of $V$ normalizing $C$.
Let $V^\reg \subseteq V$ be the open locus of elements on which $C$ acts freely.
For any root of unity $\zeta \in \bb{C}^\times$ and $w \in C$, we say that $wf$ is \dfemph{$\zeta$-regular}, or just \dfemph{regular}, if and only if $wf$ has an eigenvector $v \in V^\reg$ with eigenvalue $\zeta$.

In particular, if $C = W_{\Gamma_\bbb{G}}$ and $Cf = [f]_\bbb{G}$ for some generic group $\bbb{G}$, then $C$ is crystallographic, so by Galois theory, $wf$ is $\zeta$-regular if and only if it is $\zeta'$-regular for any other root of unity $\zeta'$ of the same order: say, $e$.
In this case, we refer to $e$ as a \dfemph{regular number} for $\bbb{G}$, and to $\zeta$-regular elements of $Cf$ as \dfemph{$\Phi_e$-regular}.

\begin{ex}\label[ex]{ex:twisted-coxeter}
Suppose that there is an $f$-stable system of simple reflections $S \subseteq C$, or in other words, that $f$ is an automorphism of the Coxeter system $(C, S)$.
Let $w \in C$ be the product, in any order, of a full set of representatives for the $f$-orbits on $S$, and let $c = wf$.
Then $c$ is a regular element of $Cf$.
Elements that take this form for some $f$-stable $S$ and choice of $f$-orbit representatives are called \dfemph{twisted Coxeter elements} of $Cf$~\cite[\S{7.3}]{springer_74}.

In particular, if $\bbb{G}$ is a generic group for which $\Gamma_\bbb{G}$ is irreducible and $f$ is induced by a Dynkin-diagram automorphism, then we define the \dfemph{twisted Coxeter number} of $\bbb{G}$ to be the order of any twisted Coxeter element of $[f]_\bbb{G}$.
\end{ex}

\begin{prop}[Brou\'e--Malle--Michel]\label[prop]{prop:regular}
If $e$ is a regular number for $\bbb{G}$, then $\bbb{G}$ admits a $\Phi_e$-split maximal torus.
Conversely, if $\bbb{T} \leq \bbb{G}$ is a $\Phi_e$-split maximal torus, then $\bbb{T}$ has type $[wf]$ for some $\Phi_e$-regular element $wf \in [f]_\bbb{G}$.
\end{prop}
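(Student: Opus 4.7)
The plan is to reduce both assertions to linear algebra on the cocharacter lattice $Y$ of $\bbb{G}$ and its complexification $V = Y \otimes_\bb{Z} \bb{C}$, on which every element $wf \in [f]_\bbb{G}$ acts with finite order, hence semisimply. Recall the dictionary: a maximal torus of type $[wf]$ has cocharacter lattice $(Y, wf)$; a subtorus $\bbb{S} \subseteq \bbb{T}$ corresponds to a saturated $wf$-stable sublattice $Y_\bbb{S} \subseteq Y$; the subtorus $\bbb{S}$ is a $\Phi_e$-torus iff $Y_\bbb{S} \otimes \bb{C} \subseteq V_e := \ker \Phi_e(wf \mid V)$; and $Z_\bbb{G}(\bbb{S}) = \bbb{T}$ iff no root of $\bbb{G}$ vanishes identically on $Y_\bbb{S}$.

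For part (1), suppose $e$ is a regular number, witnessed by $wf \in [f]_\bbb{G}$ and $v \in V^\reg$ with $(wf) v = \zeta v$ for $\zeta$ a primitive $e$th root of unity. Let $\bbb{T}$ be the maximal torus of type $[wf]$, and let $\bbb{S} \subseteq \bbb{T}$ be the subtorus with cocharacter lattice $Y \cap V_e$. Since $V_e$ is $\bb{Q}$-rational and nonzero (it contains $v$), this defines a nontrivial $\Phi_e$-subtorus. If a root $\alpha$ vanished on $Y \cap V_e$, it would vanish on all of $V_e$ by $\bb{Q}$-rationality, and in particular on $v$---contradicting $v \in V^\reg$. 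Hence $Z_\bbb{G}(\bbb{S}) = \bbb{T}$, and $\bbb{T}$ is $\Phi_e$-split.

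For part (2), let $\bbb{T}$ be $\Phi_e$-split of type $[wf]$, witnessed by a $\Phi_e$-subtorus $\bbb{S}$. Fix any primitive $e$th root of unity $\zeta$ and let $V_\zeta$ be the corresponding $wf$-eigenspace; I aim to produce $v \in V_\zeta \cap V^\reg$. Since $wf$ acts semisimply on $Y_\bbb{S} \otimes \bb{C}$ with all eigenvalues primitive $e$th roots of unity, and since $\mathrm{Gal}(\bb{Q}(\zeta)/\bb{Q})$ transitively permutes these eigenspaces while preserving $Y_\bbb{S} \otimes \bb{Q}$, the intersection $V_\zeta \cap (Y_\bbb{S} \otimes \bb{C})$ is nonzero. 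If some root $\alpha$ vanished identically on $V_\zeta$, then Galois-stability of $\ker \alpha$ would force $\alpha$ to vanish on each $V_{\zeta'}$ for primitive $e$th roots $\zeta'$, hence on the $\bb{Q}$-rational sum $V_e$, and thus on $Y_\bbb{S} \otimes \bb{Q}$---contradicting $Z_\bbb{G}(\bbb{S}) = \bbb{T}$. So each $\ker \alpha \cap V_\zeta$ is a proper subspace, and finitely many such subspaces leave a Zariski-dense complement; any $v$ in this complement witnesses $\zeta$-regularity of $wf$.

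The only nontrivial ingredient is the Galois-descent step in (2): the individual eigenspaces $V_\zeta$ are not $\bb{Q}$-rational, so the non-vanishing information coming from the centralizer condition on $Y_\bbb{S} \otimes \bb{Q}$ must be transferred to a single $V_\zeta$ by exploiting Galois symmetry among primitive $e$th roots of unity. Everything else is an unwinding of definitions, together with the fact that a finite-order endomorphism of $V$ is semisimple with $\bb{Q}$-rational generalized eigenspace decomposition grouped by Galois orbits of eigenvalues.
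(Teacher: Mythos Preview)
Your argument is correct and more self-contained than the paper's. The paper simply cites Brou\'e--Malle--Michel \cite{bmm_99}: part~(1) is their Proposition~5.8, and for part~(2) it observes that a $\Phi_e$-subtorus whose centralizer is a maximal torus must be Sylow, then invokes their Corollary~5.10, which says Sylow $\Phi_e$-tori exist precisely when $e$ is regular and are all $W_{\Gamma_\bbb{G}}$-conjugate. So the paper's route passes through the existence-and-conjugacy theorem for Sylow $\Phi_e$-tori, and as a byproduct gets that all $\Phi_e$-split maximal tori have the \emph{same} type. Your route bypasses Sylow theory entirely: you work directly with the $wf$-eigenspace decomposition of the cocharacter space and use the Galois-transitivity on primitive $e$th roots to transport information between the $\bb{Q}$-rational space $V_e$ and a single eigenspace $V_\zeta$. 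This is more elementary and makes the statement visibly equivalent to a linear-algebra fact, but it does not by itself yield the uniqueness of the type (which, to be fair, the proposition as stated does not assert). One small remark: in part~(2) you need $V_\zeta \neq 0$ before you can say each $\ker\alpha \cap V_\zeta$ is a \emph{proper} subspace; your observation that $V_\zeta \cap (Y_\bbb{S} \otimes \bb{C}) \neq 0$ supplies exactly this, so the sentence is not redundant even though it is not invoked again explicitly.
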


\begin{proof}
The first claim follows from~\cite[Prop.\ 5.8]{bmm_99}.

To show the second claim:
Suppose that $\bbb{T} = Z_\bbb{G}(\bbb{S})$ for some $\Phi_e$-torus $\bbb{S}$.
If $\bbb{T}$ is itself a torus, then $\bbb{S}$ must be maximal or \dfemph{Sylow} in the terminology of~\cite{bmm, broue}.
Then by~\cite[Cor.\ 5.10]{bmm_99}, $e$ must be a regular number for $\bbb{G}$, so by the first claim, there is a torus of type $[wf]$ for some $\Phi_e$-regular $wf$.
Again by~\cite[Cor.\ 5.10]{bmm_99}, $f_\bbb{T}$ must be conjugate to $wf$ under $W_{\Gamma_\bbb{G}}$, so $\bbb{T}$ also has type $[wf]$.
\end{proof}

\subsection{Singular Numbers}\label{subsec:singular}

The \dfemph{adjoint form} of $\bbb{G}$ is the generic group $\bbb{G}_\ad$ defined by $\Gamma_{\bbb{G}_\ad} = (Q, R, Q^\vee, R^\vee)$ and $[f]_{\bbb{G}_\ad} = W_{\Gamma_{\bbb{G}_\ad}}f|_{\Gamma_{\bbb{G}_\ad}}$, where $Q = \bb{Z}R$.
Using it, we can state~\cite[Prop.~2.9]{bmm}:

\begin{prop}[Brou\'e--Malle--Michel]\label[prop]{prop:cuspidal}
An element $\rho \in \Uch(\bbb{G})$ is $\Phi_e$-cuspidal if and only if the largest power of $\Phi_e(x)$ dividing $\Deg_\rho(x)$ equals the largest power of $\Phi_e(x)$ dividing $|\bbb{G}_\ad|(x)$, in $\bb{Q}[x]$.
\end{prop}

Observe that if $\rho \in \Uch(\bbb{G})$ is $\Phi_e$-cuspidal, then there is a $\Phi_e$-Harish-Chandra series $\Uch(\bbb{G})_{\bbb{G}, \rho}$ consisting of $\rho$ alone.

We say that $e$ is a \dfemph{singular number} for $\bbb{G}$ if and only if (a positive power of) $\Phi_e$ divides $|\bbb{G}_\ad|$.
Observe that there are finitely many such numbers.
For groups of exceptional type, we list them in \Cref{sec:exceptional}.

\begin{rem}\label[rem]{rem:singular}
As Malle noted to us, the facts below follow from the Sylow theorems:
\begin{itemize}
\item 	Every regular number is singular.
\item 	If $e$ is \emph{not} a singular number for $\bbb{G}$, then every element of $\Uch(\bbb{G})$ is $\Phi_e$-cuspidal.
Hence the $\Phi_e$-Harish-Chandra partition is nontrivial only if $e$ is a singular number.
\end{itemize}
\end{rem}

\section{Cyclotomic Hecke Algebras}\label{sec:hecke}

\subsection{}

In~\cite{lusztig_coxeter}, Lusztig determined the $G^F$-equivariant endomorphisms of the representation $\ur{H}_c^\ast(X_{T \subseteq B}^G)$ in the case where $T \subseteq G$ is a \dfemph{Coxeter torus}: a maximal torus of type $[c]$ for some twisted Coxeter element $c$.
He observed that this Hecke algebra is a $q$-analogue of the group algebra of $W^{G^F}_{T^F}$, with parameters given by the eigenvalues of Frobenius.
Soon afterward, in~\cite{lusztig_77}, Lusztig determined the corresponding algebras for various unipotent cuspidal pairs arising from $F$-split Levi subgroups of classical groups.
Again, they turn out to be $q$-analogues of the group algebras of the associated relative Weyl groups.
Lusztig's calculations were later extended to general $F$-split Levis in~\cite{lusztig_78}, and to non-unipotent characters by Howlett--Lehrer in~\cite{hl}.

Let $C$ be a complex reflection group of the form $W^\bbb{G}_{\bbb{L}, \lambda}$ for some generic group $\bbb{G}$, integer $e > 0$, and $\Phi_e$-cuspidal pair $(\bbb{L}, \lambda)$, as in \S\ref{subsec:relative Weyl}.
In~\cite{bm_93, bmr}, Brou\'e--Malle--Rouquier study a ring $H_C(\vec{u})$ that only depends on the structure of $C$ as a complex reflection group.
It may be viewed as a multi-parameter version of the Hecke algebras studied in the works above, in which the parameters are also allowed to be as generic as possible.

In~\cite{bm_93}, motivated by~\cite{lusztig_coxeter, lusztig_78} and Brou\'e's abelian defect group conjecture~\cite[Ch.\ VI]{broue}, Brou\'e--Malle defined a specialization of $H_{W^\bbb{G}_{\bbb{L}, \lambda}}(\vec{u})$ depending on $\bbb{G}$ and $(\bbb{L}, \lambda)$ and sending its parameters to various rational powers of a single variable $x$.
They conjectured that the resulting ring $H^\bbb{G}_{\bbb{L}, \lambda}(x)$ specializes, in turn, to all of the $\bbb{G}(q)$-endomorphism algebras arising from the Deligne--Lusztig varieties associated with $\bbb{G}$ and $(\bbb{L}, \lambda)$.
We review their work below.

\subsection{}\label{subsec:bmr}

First, we review $H_C(\vec{u})$, following~\cite{bmr} and~\cite[Ch.\ II--III]{broue}.

Let $C$ be an arbitrary finite complex reflection group and $V = V(C)$ its reflection representation over $\bb{C}$.
Let $\cal{A} = \cal{A}(C)$ be the set of hyperplanes in $V$ fixed by pseudo-reflections in $C$, so that $V^\reg = V - \bigcup_{H \in \cal{A}} H$, and let $\Br_C = \pi_1(V^\reg / C)$, the braid group of $C$.
For each orbit $\cal{C} \in \cal{A}/C$ and hyperplane $H \in \cal{C}$, let $\sigma_H \in \Br_C$ be a choice of a generator of the monodromy around $H$ that is a distinguished braid reflection in the sense of~\cite[Def.\@ 2.15]{bmr}.
Let the pseudo-reflection $s_H$ be the image of $\sigma_\cal{C}$ under the quotient map $\Br_C \to C$, and let $e_\cal{C}$ be the order of $s_H$, which only depends on $\cal{C}$.
The ``distinguished'' condition on $\sigma_H$ means that
\begin{align}\label{eq:distinguished}
\textstyle\det_V(s_H) = e^{2\pi i/e_\cal{C}}
	\quad\text{for all $H \in \cal{C}$}.
\end{align}
Let $\bb{Z}[\vec{u}_C^{\pm 1}] = \bb{Z}[u_{\cal{C}, j}^{\pm 1} \mid \cal{C} \in \cal{A}/C,\, 0 \leq j < e_\cal{C}]$.
When $C$ is clear from context, we write $\vec{u}^{\pm 1}$ in place of $\vec{u}_C^{\pm 1}$.
Let
\begin{align}
H_C(\vec{u}) 
	&= \frac{\bb{Z}[\vec{u}^{\pm 1}]\Br_C}{\langle (\sigma_H - u_{\cal{C}, 0}) \cdots (\sigma_H - u_{\cal{C}, e_\cal{C} - 1}) \mid \cal{C} \in \cal{A}/C,\, H \in \cal{C}\rangle}.
\end{align}
It was conjectured in~\cite{bmr} for all $C$, and proved for the groups denoted $C = G(m, p, n)$ in Shephard--Todd notation, that $H_C(\vec{u})$ is free over $\bb{Z}[\vec{u}^{\pm 1}]$ of rank $|C|$.
The exceptional cases were shown through the effort of many authors:
See~\cite{ts} and the references therein.
For us, the following result from~\cite{etingof} suffices:

\begin{thm}[Etingof--Rains, Losev, Marin--Pfeiffer]\label[thm]{thm:free}
Suppose that $\bb{K}$ is a field of characteristic zero.
Then $\bb{K} \otimes_\bb{Z} H_C(\vec{u})$ is a free module over $\bb{K}[\vec{u}^{\pm 1}]$ of rank $|C|$.
In particular, if we fix a ring homomorphism $\bb{Z}[\vec{u}^{\pm 1}] \to \bb{K}$, then the corresponding base change $\bb{K} \otimes_{\bb{Z}[\vec{u}^{\pm 1}]} H_C(\vec{u})$ is a $\bb{K}$-algebra of dimension $|C|$.
\end{thm}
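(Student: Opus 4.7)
The plan is to prove this in three moves: reduce to the case where $C$ is irreducible, establish an upper bound on the $K[\vec{u}^{\pm 1}]$-rank of $K \otimes_\bb{Z} H_C(\vec{u})$, and then pin down the lower bound case by case.

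First I would reduce to the irreducible case. If $C = C_1 \times C_2$, then $\cal{A}(C)$ is the disjoint union of the pullbacks of $\cal{A}(C_1)$ and $\cal{A}(C_2)$, and $\Br_C \simeq \Br_{C_1} \times \Br_{C_2}$, from which one reads off an isomorphism $H_C(\vec{u}) \simeq H_{C_1}(\vec{u}_1) \otimes_\bb{Z} H_{C_2}(\vec{u}_2)$ of $\bb{Z}[\vec{u}^{\pm 1}]$-algebras. By the Shephard--Todd classification, we may thus assume that $C$ is either in the infinite family $G(de, e, n)$ or one of the $34$ exceptional complex reflection groups $G_4, \ldots, G_{37}$.

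For the upper bound I would use the ``canonical lift'' construction of Brou\'e--Malle--Rouquier. Fix a normal form for elements of $C$ as reduced words in the pseudo-reflections $s_H$, lift each such expression to a positive braid word in the $\sigma_H$, and argue that the order-$e_\cal{C}$ relations $\prod_j (\sigma_H - u_{\cal{C}, j}) = 0$, together with the braid relations, let one rewrite any word in the $\sigma_H$ as a $K[\vec{u}^{\pm 1}]$-linear combination of the $|C|$ chosen lifts. This exhibits $K \otimes_\bb{Z} H_C(\vec{u})$ as a quotient of a free module of rank $|C|$, yielding the upper bound uniformly.

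For the lower bound, the method depends on the type. When $C$ is a real (Coxeter) reflection group, the statement is Iwahori--Matsumoto's theorem, realized by the Kazhdan--Lusztig basis. When $C = G(de, e, n)$, the algebra is a well-studied specialization of an Ariki--Koike algebra (or a fixed-point subalgebra thereof), and explicit bases of cardinality $|C|$ were constructed by Ariki--Koike and Brou\'e--Malle--Rouquier. For exceptional $C$ of small order, one can verify the lower bound by writing down a faithful representation of dimension $|C|$ over $K(\vec{u})$ and checking nondegeneracy directly. For the remaining exceptional groups, one passes through the KZ functor from rational Cherednik algebra category O, following Ginzburg--Guay--Opdam--Rouquier: for a one-parameter specialization $\vec{u} = \vec{u}(q)$ with $q$ transcendental, KZ produces a faithful $H_C$-module whose dimension matches $|C|$, so a generic fiber of $K \otimes_\bb{Z} H_C(\vec{u})$ has dimension at least $|C|$ over $K(\vec{u})$, matching the upper bound.

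The hard part is that no uniform argument is known, and my sketch papers over genuine difficulties in the exceptional cases. For several complex reflection groups of high rank --- notably $G_{27}, G_{29}, G_{31}, G_{33}, G_{34}$ --- establishing the lower bound requires either substantial computer-assisted verification in the style of Marin--Pfeiffer, or the deformation-theoretic input of Etingof--Rains and Losev, applied group by group. Moreover, the KZ-functor approach only yields freeness over the generic field $K(\vec{u})$, and one must rule out exotic extra relations in $H_C(\vec{u})$ appearing in codimension one on $\Spec \bb{Z}[\vec{u}^{\pm 1}]$; this descent, not the generic statement, is what takes genuine new ideas in each remaining case.
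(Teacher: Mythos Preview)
The paper does not actually prove this theorem. It is stated as a result from the literature, attributed to Etingof--Rains, Losev, and Marin--Pfeiffer, with a citation to Etingof's expository note \cite{etingof} that assembles these contributions into a complete proof of the Brou\'e--Malle--Rouquier freeness conjecture in characteristic zero. The surrounding text merely remarks that the original BMR conjecture (freeness over $\bb{Z}[\vec{u}^{\pm 1}]$) has been verified for the infinite families and some exceptional cases, and that the characteristic-zero statement suffices for the paper's purposes.

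Your sketch is a reasonable high-level summary of how the actual proof in the literature is organized: reduction to irreducible types, an easy upper bound via spanning sets, and a case-by-case lower bound culminating in the hard exceptional groups handled by Marin--Pfeiffer's computer calculations and the Etingof--Rains/Losev deformation arguments. Your final paragraph correctly identifies where the genuine content lies. But since the paper offers no proof of its own, there is nothing to compare your proposal against; the honest answer to ``how does the paper prove this?'' is that it does not --- it quotes the result.
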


\begin{ex}\label[ex]{ex:group-alg}
Let $\bb{K}$ be any ring containing a primitive $e_\cal{C}$th root of unity $\zeta_{e_\cal{C}}$ for all $\cal{C}$.
Then the group algebra $\bb{K}C$ is precisely the base change of $H_C(\vec{u})$ along the homomorphism $\bb{Z}[\vec{u}^{\pm 1}] \to \bb{K}$ that sends $u_{\cal{C}, j} \mapsto \zeta_{e_\cal{C}}^j$~\cite[45--46]{broue}.
\end{ex}

\subsection{}\label{subsec:bm_93}

Let $\bb{Q}_\cyc$ be the maximal cyclotomic field extension of $\bb{Q}$, and let $\bb{Z}_\cyc$ be its ring of integers.
Fix an inverse system of roots of unity $\{\zeta_m\}_{m \geq 1} \subset \bb{Z}_\cyc$, meaning $\zeta_m$ is of order $m$ and $\zeta_{mn}^n = \zeta_m$ for all $m, n$.

Fix $\bbb{G}$, $e$, and $\Phi_e$-cuspidal $(\bbb{L}, \lambda)$.
Take $C = W^\bbb{G}_{\bbb{L}, \lambda}$ in the setup of the previous subsection.
In~\cite{bm_93, malle}, Brou\'e--Malle define a ring homomorphism 
$$\cal{S}^\bbb{G}_{\bbb{L}, \lambda} : \bb{Z}[\vec{u}^{\pm 1}] \to \bb{Z}_\cyc[x^{\pm 1/\infty}]$$ of the form
\begin{align}\label{eq:specialization}
u_{\cal{C}, j} \mapsto \zeta_{e_\cal{C}}^j (\zeta_e^{-1} x)^{m_{\cal{C}, j}}
	\quad\text{for some $m_{\cal{C}, j} \in \bb{Q}$}.
\end{align}
Here, the parameters $m_{\cal{C}, j}$ can depend on $\bbb{G}$ and $(\bbb{L}, \lambda)$, not just on the structure of $W^\bbb{G}_{\bbb{L}, \lambda}$ as a complex reflection group.
Let
\begin{align}
H^\bbb{G}_{\bbb{L}, \lambda}(x)
	\vcentcolon= \bb{Z}_\cyc[x^{\pm 1/\infty}] \otimes_{\bb{Z}[\vec{u}^{\pm 1}]} H_{W^\bbb{G}_{\bbb{L}, \lambda}}(\vec{u})
\end{align}
denote the base change of $H_{W^\bbb{G}_{\bbb{L}, \lambda}}(\vec{u})$ along $\cal{S}^\bbb{G}_{\bbb{L}, \lambda}$.
Strictly speaking, $H^\bbb{G}_{\bbb{L}, \lambda}(x)$ is an abuse of notation, as this algebra may involve higher roots of $x$.
Note, as well, that $H^\bbb{G}_{\bbb{L}, \lambda}(x)|_{x \to \zeta_e}$ recovers the group algebra of $W^\bbb{G}_{\bbb{L}, \lambda}$ over $\bb{Z}_\cyc$.

Brou\'e--Malle infer the general definition of $\cal{S}^\bbb{G}_{\bbb{L}, \lambda}$ from the cases in which $\Gamma_\bbb{G}$ is irreducible and $[f]_\bbb{G}$ arises from a Dynkin-diagram automorphism.
Here, they show that $\cal{S}^\bbb{G}_{\bbb{L}, \lambda}$ is essentially determined by certain numerical predictions related to the conjecture below.
For an overview of these predictions, see (U5.3.1)--(U.5.3.4) in the survey~\cite{broue}.
For $\bbb{G}$ of type $A$, we describe $\cal{S}^\bbb{G}_{\bbb{L}, \lambda}$ explicitly in \Cref{sec:gl-gu}.
For $\bbb{G}$ of exceptional type, we describe $\cal{S}^\bbb{G}_{\bbb{L}, \lambda}$ in \Cref{sec:exceptional}.

Suppose that $G, F, L$ arise from $q, \bbb{G}, \bbb{L}$. 
Recall the $G^F$-variety $Y_{L \subseteq P}^G$ over $\bb{F}_q$.
For any $\lambda_q \in \Uch(L^F)$, 
we write $\ur{H}_c^\ast(Y_{L \subseteq P}^G)[\lambda_q]$ to denote the $\lambda_q$-isotypic component of $\ur{H}_c^\ast(Y_{L \subseteq P}^G)$, viewed as a graded representation of $G^F$ of finite dimension over $\QL$.
What follows is essentially conjecture (HC) of~\cite[84]{broue}.

\begin{conj}[Brou\'e--Malle]\label[conj]{conj:end}
For $\bbb{G}, e, (\bbb{L}, \lambda)$ as above, and for any prime power $q > 1$ and prime $\ell$ invertible in $\bb{F}_q$, there is an isomorphism of algebras
\begin{align}\label{eq:end}
\QL \otimes_{\bb{Z}_\cyc[x^{\pm1/\infty}]} H^\bbb{G}_{\bbb{L}, \lambda}(x)
	\simeq \End_{\QL G^F}(\ur{H}_c^\ast(Y_{L \subseteq P}^G)[\lambda_q]),
\end{align}
where on the left, the base change sends $x^{1/n}$ to an $n$th root of $q$ in $\QL$ for each integer $n > 0$, and on the right, $G, F, L, \lambda_q$ arise from $q, \bbb{G}, \bbb{L}, \lambda$.
\end{conj}

Note that the conjecture would imply that $H^\bbb{G}_{\bbb{L}, \lambda}(x)$ is uniquely determined by the isomorphisms \eqref{eq:end}, up to Galois automorphisms of $\bb{Z}_\cyc$.

\begin{ex}\label[ex]{ex:principal-hecke}
In \Cref{ex:principal-generic}, write $W = W^\bbb{G}_\bbb{A}$.
Then $H_W(\vec{u})$ takes the form
\begin{align}
H_W(\vec{u}) = \bb{Z}[\vec{u}^{\pm 1}]\Br_W/\langle (\sigma_H - u_{\cal{C}, 0})(\sigma_H - u_{\cal{C}, 1}) \mid \cal{C} \in \cal{A}/W,\, H \in \cal{C}\rangle.
\end{align}
Meanwhile, $H^G_{A, 1}(q)$ is generated by Hecke operators $T_s$ for each simple reflection $s \in W$, modulo braid relations and quadratic relations.

Suppose that $\bbb{G}$ is split.
Then $W = W_{\Gamma_\bbb{G}}$, and the quadratic relations in~\cite{iwahori} take the form $(T_s + 1)(T_s - q) = 0$.
The homomorphism $\cal{S}^\bbb{G}_{\bbb{A}, 1}$ in~\cite{bm_93} sends $(u_{\cal{C}, 0}, u_{\cal{C}, 1}) \mapsto (1, -x)$ for all $\cal{C}$.
We take \eqref{eq:end} to be given by $\sigma_H \mapsto -T_{s_H}$.
\end{ex}

\subsection{}

In cases where $(\bbb{L}, \lambda) = (\bbb{T}, 1)$ for some maximal torus $\bbb{T}$ of $\bbb{G}$, Brou\'e--Michel constructed a $\Br_{W^\bbb{G}_\bbb{T}}$-action on the associated Deligne--Lusztig cohomologies, and conjectured that the $H^\bbb{G}_{\bbb{T}, 1}(x)$-action of \Cref{conj:end} would arise from this braid action.
See~\cite{bm_97} and~\cite[84--88]{broue} for details.
This conjecture has since been established in many cases by Digne, Michel, and Rouquier~\cite{dmr, dm_06}, building on~\cite{lusztig_coxeter}.
To summarize the state of the art, we follow~\cite{dm_06}.

\begin{thm}[Lusztig, Digne--Michel--Rouquier]\label[thm]{thm:ldmr}
Let $\bbb{T} \leq \bbb{G}$ be a maximal torus.
If any of the following hold, then \Cref{conj:end} holds for $(\bbb{L}, \lambda) = (\bbb{T}, 1)$.
Throughout, $f$ is a Dynkin-diagram automorphism representing $[f]_\bbb{G}$.
\begin{enumerate}
\item 	$\bbb{T}$ is $\Phi_1$-split.

\item 	$\bbb{T}$ is \dfemph{Coxeter} in the sense of~\cite[(1.14)]{lusztig_coxeter}.
		That is, $\bbb{T}$ is of type $[c]$ for some twisted Coxeter element $c \in [f]_\bbb{G}$, as defined in \Cref{ex:twisted-coxeter}.

\item 	$\bbb{T}$ is of type $[w_0f]$, where $w_0 \in W_{\Gamma_\bbb{G}}$ is the longest element.

\item 	$\bbb{G}$ is split of type $A$.

\item 	$\bbb{G}$ is of type $B$ and $\bbb{T}$ is $\Phi_e$-split for some even $e > 0$.

\item 	$\bbb{G}$ is split of type $D_4$ and $\bbb{T}$ is $\Phi_4$-split.

\end{enumerate}
\end{thm}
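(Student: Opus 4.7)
The plan is to prove each listed case by exhibiting, on the cohomology $\ur{H}_c^\ast(Y_{T \subseteq B}^G)$, an action of the braid group $\Br_{W_{\bbb{G},\bbb{T}}}$ that factors through an explicit quotient of $H_{W_{\bbb{G},\bbb{T}}}(\vec{u})$ under a prescribed $\Phi_e$-specialization $\cal{S}_{\bbb{G},\bbb{T},1}$, and then to identify this quotient with the full $G^F$-equivariant endomorphism algebra. The existence of such a natural braid action is the subject of Brou\'e--Michel's conjecture recalled just before the theorem; once granted, the problem splits into verifying (a) the quadratic relations satisfied by the images of the distinguished reflection braids $\sigma_H$ of \Cref{subsec:bmr}, and (b) a dimension count matching $|W_{\bbb{G},\bbb{T}}|$, which by \Cref{thm:free} then forces the braid action to factor through and surject onto the predicted Hecke algebra.

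For case (1), I would invoke Iwahori's classical theorem: when $\bbb{T}$ is $\Phi_1$-split, $Y_{T \subseteq B}^G = G^F/U^F$ and the endomorphism algebra of $R_T^G(1)$ is the standard Iwahori--Hecke algebra with relations $(T_s+1)(T_s-q)=0$, which pins down $\cal{S}_{\bbb{G},\bbb{T},1}$ exactly as in \Cref{ex:principal-hecke}. Case (3) would then be reduced to a twisted analogue of case (1): for the $[w_0 f]$-type torus, the variety $Y_{T \subseteq B}^G$ can be taken with the opposite Borel, so that Alvis--Curtis duality and the Lefschetz principle transport the parameters from the split case up to a uniform sign twist. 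For Coxeter tori (case 2), I would follow Lusztig's approach in \cite{lusztig_coxeter}: the semisimple part of $F$ acts on each $\ur{H}_c^i(X_{T \subseteq B}^G)$ by a single eigenvalue, and each cohomological degree contains a single irreducible $G^F$-constituent of $R_T^G(1)$. Since $W_{\bbb{G},\bbb{T}}$ is cyclic of order equal to the twisted Coxeter number, a generator of $\Br_{W_{\bbb{G},\bbb{T}}}$ is forced to act by a diagonal operator whose eigenvalues are read directly off the Frobenius eigenvalues, producing the one-variable Hecke algebra with the predicted exponents.

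Cases (4)--(6) would proceed by induction and case analysis in the spirit of \cite{dmr, dm_06}. For split type $A$ (case 4), I would exploit the fact that all unipotent characters lie in the principal series, together with a parabolic induction argument from smaller Levis whose Hecke parameters are already controlled by cases (1) and (2), matching parameters through generic degrees. For type $B$ with $\Phi_e$-split $\bbb{T}$, $e$ even (case 5), I would realize $W_{\bbb{G},\bbb{T}}$ as a wreath-product-type complex reflection group and combine Brou\'e--Michel's braid action with a reduction to the type-$A$ Levi pieces handled by (4), recovering the missing Hecke parameters from the known generic degrees attached to the characters in $\Uch(\bbb{G},\bbb{T},1)$. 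Case (6), a small-rank exceptional situation, would be settled by direct computation, exploiting the triality automorphism of $D_4$ to realize the exceptional relative complex reflection group that appears.

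The main obstacle throughout is step (a): computing, for non-split non-Coxeter $\bbb{T}$, the precise eigenvalues of the semisimple part of $F$ on the $\lambda_q$-isotypic components of $\ur{H}_c^\ast(Y_{T \subseteq B}^G)$. These eigenvalues are exactly what the exponents $m_{\cal{C},j}(\cal{S}_{\bbb{G},\bbb{T},1})$ must encode, and while Deligne--Lusztig theory provides both their sum (through the Lefschetz trace formula) and their absolute values (via Deligne's purity), separating individual eigenvalues typically requires either a direct geometric argument, as in \cite{lusztig_coxeter} for Coxeter tori, or a derived equivalence transporting structure from a case where the eigenvalues are already known. It is precisely this difficulty that restricts the established cases to those where the relative Weyl group is cyclic, built inductively from cyclic pieces whose parameters are controlled by (1)--(3), or small enough to be analyzed by hand.
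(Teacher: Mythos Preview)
The paper's own proof is purely a citation proof: each case is dispatched by pointing to the precise theorem in the literature (Iwahori and Lusztig for (1), Lusztig for (2), \cite[Th.\ 5.4.1]{dmr} for (3), and \cite[Thm.\ 10.1, Thm.\ 11.1, Prop.\ 12.2]{dm_06} for (4)--(6) respectively). No argument is reconstructed. Your proposal instead attempts to sketch the mathematical content of those cited proofs, which is a different undertaking; in doing so, several of your sketches misrepresent how the arguments actually run.

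Two concrete gaps. First, in case (1) you only invoke Iwahori, which handles split $\bbb{G}$; for non-split $\bbb{G}$ the relative Weyl group $W_{\bbb{G},\bbb{A}}$ is a proper subgroup of $W_{\Gamma_\bbb{G}}$ and the Hecke algebra generally has unequal parameters. That case is due to Lusztig \cite{lusztig_78}, and it is not recovered by Iwahori's argument. Second, your reduction of case (3) to case (1) via ``the opposite Borel'' and Alvis--Curtis duality does not work: the Deligne--Lusztig variety attached to a torus of type $[w_0 f]$ is not the variety for the split torus with respect to a different Borel, and Alvis--Curtis duality does not transport the $G^F$-equivariant endomorphism algebra of $\ur{H}_c^\ast(Y_{T\subseteq B}^G)$ along in the way you suggest. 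The actual proof in \cite{dmr} is a substantial cohomological computation using the Brou\'e--Michel braid action and purity, not a formal duality argument. Your sketches for (4)--(5) are closer in spirit to \cite{dm_06}, though the reduction there is to the Coxeter case via specific positive-braid identities rather than ``parabolic induction from smaller Levis''; and in (6), note that $\bbb{G}$ is \emph{split} $D_4$, so the triality automorphism is not part of the data defining $\bbb{G}$, even if the exceptional relative reflection group happens to be related to it.
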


\begin{proof}
\begin{enumerate}
\item 	If $\bbb{G}$ is split, then this is due to Iwahori~\cite{iwahori}.
		The map $\cal{S}^\bbb{G}_{\bbb{T}, 1}$ was described above, in \Cref{ex:principal-hecke}.
		If $\bbb{G}$ is not split, then it is due to Lusztig~\cite{lusztig_78}, and the map $\cal{S}^\bbb{G}_{\bbb{T}, 1}$ is given by Table II of \emph{ibid.}

\item 	This is due to Lusztig~\cite{lusztig_coxeter}.
		The map $\cal{S}^\bbb{G}_{\bbb{T}, 1}$ sends the variables $u_{\cal{C}, j}$ to the powers of $x$ that specialize to the eigenvalues of $F$ on $\ur{H}_c^\ast(X_{T \subseteq B}^G)$, when $G, F, T$ arise from $q, \bbb{G}, \bbb{T}$.
		When $G$ is almost-simple, the eigenvalues are listed in (7.3) of \emph{ibid.}

\item 	This is~\cite[Th.\ 5.4.1]{dmr}, reviewed in~\cite[Prop.\ 7.2]{dm_06}.

\item 	This is~\cite[Thm.\ 10.1]{dm_06}.
		Note that in type $A$, the Weyl group contains two conjugacy classes of regular elements, corresponding to the two cases of the theorem.
		The requirement that $\bbb{G}$ be split is due to Section 9 of \emph{ibid.}
		
\item 	This is~\cite[Thm.\ 11.1]{dm_06}.

\item 	This is~\cite[Prop.\ 12.2]{dm_06}.
		\qedhere

\end{enumerate}
\end{proof}

\subsection{}

Beyond tori, \Cref{conj:end} has been established in the cases where:

\begin{enumerate}
\item 	$\bbb{L}$ is an arbitrary $\Phi_1$-split Levi.
		This case is again due to Lusztig, with the map $\cal{S}^\bbb{G}_{\bbb{L}, \lambda}$ given by~\cite[Table II]{lusztig_78}.

\item 	$\bbb{G}$ is a generic general linear group and $W^\bbb{G}_{\bbb{L}, \lambda}$ is cyclic.
		This case follows from work of Dudas~\cite{dudas}:
		See \Cref{prop:gl}(2) below.

\end{enumerate}
We defer the precise statements to \Cref{sec:gl-gu}.

\section{Conjectures about Blocks}\label{sec:blocks}



\subsection{}

Throughout this section, to simplify our discussion, we embed $\bb{Q}_\cyc$ into $\bb{C}$ via the map that sends $\zeta_m \mapsto e^{2\pi i/m}$.


For any associative algebra $H$ of finite type over a field, form the coarsest equivalence relation on the set of isomorphism classes of indecomposable, finitely-generated $H$-modules such that $[M], [M']$ are equivalent whenever $M, M'$ share a simple Jordan--H\"older factor.
Then each equivalence class defines a \dfemph{block} of the category $\sf{Rep}(H)$: the full Serre subcategory generated by the Jordan--H\"older factors of the indecomposable modules $M$ such that $[M]$ belongs to the equivalence class.
For any block $\sf{b}$, we write $[\sf{b}]$ to denote its image in $\sf{K}_0(H)$.

\subsection{}\label{subsec:decomp}

Let $C$ be any finite complex reflection group.
Using the notation $\vec{u} = \vec{u}_C$ of \Cref{sec:hecke}, let $\cal{T} : \bb{Z}[\vec{u}^{\pm 1}] \to \bb{C}$ be an arbitrary homomorphism, and let
\begin{align}
H_{C, \cal{T}} = \bb{C} \otimes_{\bb{Z}[\vec{u}^{\pm 1}]} H_C(\vec{u})
\end{align}
denote the base change of $H_C(\vec{u})$ along $\cal{T}$.
Let $\bb{K} \supseteq \bb{Q}_\cyc(u_{\cal{C}, j} \mid \cal{C}, j)$ be a splitting field for $H_C(\vec{u})$, and let $\bb{K}H_C(\vec{u})$ be the base change of $H_C(\vec{u})$ to $\bb{K}$.
Then there is a decomposition map
\begin{align}
\sf{d}_\cal{T} : \sf{K}_0(\bb{K}H_C(\vec{u})) \to \sf{K}_0(H_{C, \cal{T}})
\end{align}
uniquely determined by a certain compatibility with characteristic polynomials, as explained in~\cite[Ch.\ 7]{gp} and~\cite[Ch.\ 3]{gj}.

Let $\Irr(\bb{K}H_C(\vec{u}))$ be the set of simple $\bb{K}H_C(\vec{u})$-modules up to isomorphism.
By combining \Cref{thm:free}, \Cref{ex:group-alg}, and Tits deformation~\cite[Thm.\ 7.4.6]{gp}, we obtain a bijection $\Irr(C) \xrightarrow{\sim} \Irr(\bb{K}H_C(\vec{u}))$.
For any block $\sf{b}$ of $H_{C, \cal{T}}$, let $\Irr(C)_\sf{b}$ be the preimage of $[\sf{b}]$ along the composition of maps
\begin{align}\label{eq:brauer}
\Irr(C)
	\xrightarrow{\sim}
	\Irr(\bb{K}H_C(\vec{u}))
	\subset
	\sf{K}_0(\bb{K}H_C(\vec{u}))
	\xrightarrow{\sf{d}_\cal{T}}
	\sf{K}_0(H_{C, \cal{T}}).
\end{align}
This composition does not depend on $\bb{K}$.
Abusing language, we refer to the sets $\Irr(C)_\sf{b}$ as the \dfemph{$H_{C, \cal{T}}$-blocks} of $\Irr(C)$.

\subsection{}\label{subsec:bij}

Now let $\bbb{G}$ be a generic group.
For the first time, we fix two separate integers $e, m > 0$.
Let $(\bbb{L}, \lambda)$, \emph{resp.}\@ $(\bbb{M}, \mu)$, be a $\Phi_e$-cuspidal pair, \emph{resp.}\@ a $\Phi_m$-cuspidal pair, for $\bbb{G}$.
As in the introduction, let
\begin{align}
\Irr(W^\bbb{G}_{\bbb{L}, \lambda})_{\bbb{M}, \mu} \subseteq \Irr(W^\bbb{G}_{\bbb{L}, \lambda})
	\quad\text{and}\quad
	\Irr(W^\bbb{G}_{\bbb{M}, \mu})_{\bbb{L}, \lambda} \subseteq \Irr(W^\bbb{G}_{\bbb{M}, \mu})
	\end{align}
be the images of the maps
\begin{align}
\Irr(W^\bbb{G}_{\bbb{L}, \lambda})
	\xleftarrow{\chi_{\bbb{L}, \lambda}^\bbb{G}}
	\Uch(\bbb{G})_{\bbb{L}, \lambda} \cap \Uch(\bbb{G})_{\bbb{M}, \mu}
	\xrightarrow{\chi_{\bbb{M}, \mu}^\bbb{G}}
		\Irr(W^\bbb{G}_{\bbb{M}, \mu}).
\end{align}
Form the algebra 
\begin{align}
H^\bbb{G}_{\bbb{L}, \lambda}(\zeta_m) = H_{W^\bbb{G}_{\bbb{L}, \lambda}, \cal{T}^\bbb{G}_{\bbb{L}, \lambda, m}},
\end{align} 
where $\cal{T}^\bbb{G}_{\bbb{L}, \lambda, m}$ is the homomorphism defined by
\begin{align}
\bb{Z}[\vec{u}_C^{\pm 1}]
	\xrightarrow{\cal{S}^\bbb{G}_{\bbb{L}, \lambda}}
		\bb{Z}_\cyc[x^{\pm1/\infty}]
	\xrightarrow{x^{1/n} \mapsto \zeta_{mn}}
		\bb{C},
\end{align}
and $\cal{S}^\bbb{G}_{\bbb{L}, \lambda}$ is reviewed in~\S\ref{subsec:bm_93}.
Let $H^\bbb{G}_{\bbb{M}, \mu}(\zeta_e)$, $\cal{T}^\bbb{G}_{\bbb{M}, \mu, e}$ be defined similarly.

\begin{conj}\label[conj]{conj:bij}
For any $\bbb{G}, e, (\bbb{L}, \lambda), m, (\bbb{M}, \mu)$:
\begin{enumerate}
\item[(I)]
		$\Irr(W^\bbb{G}_{\bbb{L}, \lambda})_{\bbb{M}, \mu}$ and $\Irr(W^\bbb{G}_{\bbb{M}, \mu})_{\bbb{L}, \lambda}$ are, respectively, unions of $H^\bbb{G}_{\bbb{L}, \lambda}(\zeta_m)$-blocks and $H^\bbb{G}_{\bbb{M}, \mu}(\zeta_e)$-blocks.

\item[(II)]
		The maps in \eqref{eq:chi-double-intro} induce a bijection 
		\begin{align}
		\{\sf{b} \mid \Irr(W^\bbb{G}_{\bbb{L}, \lambda})_\sf{b} \subseteq \Irr(W^\bbb{G}_{\bbb{L}, \lambda})_{\bbb{M}, \mu}\}
			\xrightarrow{\sim}
			\{\sf{c} \mid \Irr(W^\bbb{G}_{\bbb{M}, \mu})_\sf{c} \subseteq \Irr(W^\bbb{G}_{\bbb{M}, \mu})_{\bbb{L}, \lambda}\}.
		\end{align}
In particular, we get a bijection
\begin{align}
\chi_\sf{c}^\sf{b} : \Irr(W^\bbb{G}_{\bbb{L}, \lambda})_\sf{b} \xrightarrow{\sim} \Irr(W^\bbb{G}_{\bbb{M}, \mu})_\sf{c}
\end{align}
whenever the previous bijection sends $\sf{b} \mapsto \sf{c}$.

\end{enumerate}
\end{conj}

For any $e, m, (\bbb{L}, \lambda)$ as above, it is natural to set
\begin{align} 
\chi^\bbb{G}_{\bbb{L}, \lambda, \zeta_m} \vcentcolon= \sf{d}_{\cal{T}^\bbb{G}_{\bbb{L}, \lambda, m}}(\chi)
\quad\text{for all $\chi \in \Irr(W^\bbb{G}_{\bbb{L}, \lambda})$},
\end{align}
where we have implicitly identified $\chi$ with an element of $\Irr(\bb{K}H_{W^\bbb{G}_{\bbb{L}, \lambda}}(\vec{u}))$.
But we will only use specific cases of this notation, in~\Cref{sec:asf} and~\Cref{mainconj:2}.

\subsection{The Rational DAHA}\label{subsec:daha}

Before stating the next conjecture about blocks, we review background about rational double affine Hecke algebras (DAHAs).

We keep the notations $C, V, \cal{A}, \cal{C}$ from \S\ref{subsec:bmr}.
For each $H \in \cal{A}$, fix $\alpha_H \in V^\vee$ such that $H = \ker(\alpha_H)$ and $\alpha_H^\vee \in V$ such that $\langle \alpha_H, \alpha_H^\vee \rangle = 2$.
Let $\Refl = \Refl_C$ be the set of pseudo-reflections of $C$.
For any $s \in \Refl$ with $H = \ker(s - 1)$, let $\alpha_s = \alpha_H$ and $\alpha_s^\vee = \alpha_H^\vee$.
Fix a vector 
\begin{align}
\vec{\nu} = (\nu_s)_s \in \bb{C}^\Refl
\end{align}
invariant under conjugation by $C$.
We define the \dfemph{rational Cherednik algebra} or \dfemph{rational DAHA} of $C$ with \dfemph{parameter} $\vec{\nu}$ to be the $\bb{C}$-algebra
\begin{align}
D_C^\rat(\vec{\nu}) = \frac{\bb{C}C \ltimes (\Sym(V) \otimes \Sym(V^\vee)))}{\left\langle
		\sf{y}\sf{x} - \sf{x}\sf{y} - \langle \sf{x}, \sf{y}\rangle + \displaystyle\sum_{s \in \Refl} \nu_s \langle \sf{x}, \alpha_s^\vee\rangle\langle \alpha_s, \sf{y}\rangle s
	\,\middle|\,
	\sf{x} \in V,\,
	\sf{y} \in V^\vee
	\right\rangle}.
\end{align}
This is the definition in~\cite[\S{3.1}]{em}.

We write $\sf{O}_C^\rat(\vec{\nu})$ for the BGG category O of $D_C^\rat(\vec{\nu})$ defined in~\cite[\S{3.2}]{ggor}.
Its \dfemph{Verma} or \dfemph{standard} objects are indexed by $\Irr(C)$.
For all $\chi \in \Irr(C)$, we write $\Delta_{\vec{\nu}}(\chi)$ for the corresponding standard module, and $L_{\vec{\nu}}(\chi)$ for its simple quotient.

Let $\sf{eu} \in D_C^\rat(\vec{\nu})$ be the Euler element denoted $\bb{h}$ in~\cite[\S{3.6}]{em}.
It commutes with $C$, and its action on any object of $\sf{O}_C^\rat(\vec{\nu})$ is locally finite.
For any object $M$ of $\sf{O}_C^\rat(\vec{\nu})$, we define the \dfemph{graded character} of $M$ to be
\begin{align}
[M] = \sum_{\alpha\in\bb{C}} t^\alpha [M_\alpha] \in \sf{K}_0(C)[t^\bb{C}],
\end{align}
where $M_\alpha\subset M$ is the eigenspace of $\sf{eu}$ with eigenvalue $\alpha$.
For instance, $[\Delta_{\vec{\nu}}(\chi)]$ is given by $\chi \cdot \sum_i t^i [\Sym^i(V)]$, up to a certain monomial in $t$ also depending on $\chi$.

\subsection{The KZ Functor}\label{subsec:kz}

For any finite complex reflection group $C$ and $\vec{\nu} \in \bb{C}^{\Refl_C}$, Ginzburg--Guay--Opdam--Rouquier define, in the notation of \S\ref{subsec:decomp}, a specialization $\cal{T}_{\vec{\nu}} : \bb{Z}[\vec{u}_C^{\pm 1}] \to \bb{C}$ and an exact functor
\begin{align}
\sf{KZ} : \sf{O}_C^\rat(\vec{\nu}) \to \sf{Rep}(H_{C, \cal{T}_{\vec{\nu}}})
\end{align}
called the \dfemph{Knizhnik--Zamolodchikov (KZ) functor}.
Geometrically, $\sf{KZ}(M)$ is defined by localizing $M$ to $V^\reg/C$, then using~\cite{bmr} to show that the monodromy of its Knizhnik--Zamolodchikov connection factors through $H_{C, \cal{T}_{\vec{\nu}}}$:
See~\cite[\S{5}]{ggor} and~\cite[\S{6}]{em} for details.
Explicitly, $\cal{T}_{\vec{\nu}}(u_{\cal{C}, j}) = \zeta_{\cal{C}, j}$ for all $\cal{C}, j$, where
\begin{align}\label{eq:kappa-vs-zeta}
\zeta_{\cal{C}, j}
	&= {\textstyle\det_V(s_H)^{j}} e^{2\pi i\kappa_{\cal{C}, j}}
		&&\text{for any $H \in \cal{C}$}\\
	&= e^{2\pi i(\kappa_{\cal{C}, j} + j/e_\cal{C})}
		&&\text{by \eqref{eq:distinguished}},
\end{align}
and $\kappa_{\cal{C}, j}$ is defined by
\begin{align}\label{eq:kappa-vs-nu}
\kappa_{\cal{C}, j}
&=
\frac{2}{e_\cal{C}}
\sum_{k = 1}^{e_\cal{C} - 1}
	\frac{\nu_{s_H^k}(1 - \det_V(s_H)^{jk})}{1 - \det_V(s_H)^{-k}}
	&&\text{for any $H \in \cal{C}$}\\
&=
\frac{2}{e_\cal{C}}
\sum_{k = 1}^{e_\cal{C} - 1}
	\frac{\nu_{s_H^k}(1 - e^{2\pi ijk/e_\cal{C}})}{1 - e^{-2\pi ik/e_\cal{C}}}
	&&\text{by \eqref{eq:distinguished}}.
\end{align}
Above, our $e_\cal{C}$ and $\kappa_{\cal{C}, j}$ respectively correspond to $m_H$ and $b_{j, H}/m_H$ in \cite{em}.
Note that $\zeta_{\cal{C}, 0} = 1$ for all $\cal{C}$, and if $e_\cal{C} = 2$, then $\zeta_{\cal{C}, 1} = -e^{-2\pi i\nu_{s_H}}$.

By~\cite[\S{5}]{ggor}, $\sf{KZ}$ is representable by a finitely-generated projective object $P_\sf{KZ}$, and the resulting map $H_{C, \cal{T}_{\vec{\nu}}} \to \End(P_\sf{KZ})$ is an isomorphism.
Moreover, $\sf{KZ}$ is fully faithful on projective objects.
Thus the pair $(\sf{O}_C^\rat(\vec{\nu}), \sf{KZ})$ forms a \dfemph{highest weight cover} of $\sf{Rep}(H_{C, \cal{T}_{\vec{\nu}}})$ in the sense of~\cite[\S{4}]{rouquier_08}.
Via a double-centralizer phenomenon for the image of any progenerator of $\sf{O}_C^\rat(\vec{\nu})$, the KZ functor induces a bijection between the blocks of $\sf{O}_C^\rat(\vec{\nu})$ and the blocks of $\sf{Rep}(H_{C, \cal{T}_{\vec{\nu}}})$.




\subsection{}\label{subsec:s-charge}

The next conjecture relates the objects of \Cref{conj:bij} to rational DAHAs.
First, we record the following statement of when the homomorphisms $\cal{T}^\bbb{G}_{\bbb{L}, \lambda, m}$ of 
\S\ref{subsec:bij} can be expressed in the form $\cal{T}_{\vec{\nu}}$ of \S\ref{subsec:kz}.
Once more, recall that our embedding $\bb{Q}_\cyc \subseteq \bb{C}$ is given by $\zeta_m = e^{2\pi i/m}$.

\begin{prop}\label[prop]{prop:s-charge}
Let $\cal{S}$ be a $\Phi_e$-specialization of $H_C(\vec{u})$, given by parameters $m_{\cal{C}, j}$ in the sense of \eqref{eq:specialization}, and let $\cal{T} : \bb{Z}[\vec{u}_C^{\pm 1}] \to \bb{C}$ be defined by
\begin{align}
\bb{Z}[\vec{u}_C^{\pm 1}]
	\xrightarrow{\cal{S}}
		\bb{Z}_\cyc[x^{\pm 1/\infty}]
	\xrightarrow{x^{1/n} \mapsto \zeta_{mn}}
		\bb{C}
\end{align}
for some $m$.
Then a vector $\vec{\nu} \in \bb{C}^\Refl$ satisfies $\cal{T}_{\vec{\nu}} = \cal{T}$ if and only if the numbers $\kappa_{\cal{C}, j}$ defined by $\vec{\nu}$ via \eqref{eq:kappa-vs-nu} satisfy $\kappa_{\cal{C}, j} \in (\tfrac{1}{m} - \tfrac{1}{e})m_{\cal{C}, j} + \bb{Z}$ for all $\cal{C}, j$. 
In this case, $(\frac{1}{m} - \frac{1}{e})m_{\cal{C}, 0} \in \bb{Z}$ for all $\cal{C}$.
\end{prop}


\begin{rem}
For the Brou\'e--Malle specializations $\cal{S}^\bbb{G}_{\bbb{L}, \lambda}$, it turns out that we have $m_{\cal{C}, 0} = 0$ for all $\cal{C}$ and $j$.
For type $A$, see \Cref{sec:gl-gu} and the references there; for the other classical types, see~\cite[\S{2.14--2.19}]{bm_93}; and for exceptional types, see \Cref{sec:exceptional} and the references there.
\end{rem}

Given a homomorphism $\cal{T} : \bb{Z}[\vec{u}_C^{\pm 1}] \to \bb{C}$, we will say that $\vec{\nu}$ is a \dfemph{$\cal{T}$-parameter} if and only if $\cal{T}_{\vec{\nu}} = \cal{T}$.
In this case, for any block $\sf{b}$ of $\sf{Rep}(H_{C, \cal{T}})$, we write $\sf{O}_C^\rat(\vec{\nu})_\sf{b}$ to denote the corresponding block of $\sf{O}_C^\rat(\vec{\nu})$.
Now we return to the setup of \S\ref{subsec:bij}.

\begin{conj}\label[conj]{conj:cat}
In the situation of \Cref{conj:bij}(II):
\begin{enumerate}\setcounter{enumi}{2}
\item[(III)]
	Suppose that $\vec{\nu}_e$ is an $\cal{T}^\bbb{G}_{\bbb{L}, \lambda, m}$-parameter, $\vec{\nu}_m$ is an $\cal{T}^\bbb{G}_{\bbb{M}, \mu, e}$-parameter, and $\chi_{\bbb{M}, \mu}^{\bbb{L}, \lambda}(\sf{b}) = \sf{c}$.
	Then the bijection $\chi_\sf{c}^\sf{b}$ is categorified by an equivalence 
	\begin{align}
	\sf{D}^b(\sf{O}_{W^\bbb{G}_{\bbb{L}, \lambda}}^\rat(\vec{\nu}_e)_\sf{b}) \xrightarrow{\sim} \sf{D}^b(\sf{O}_{W^\bbb{G}_{\bbb{M}, \mu}}^\rat(\vec{\nu}_m)_\sf{c}).
	\end{align}
	
\end{enumerate}
\end{conj}


\section{Conjectures about Affine Springer Fibers}\label{sec:asf}

\subsection{}

In this section, we give the precise definition of the local system of (DAHA, braid-group) bimodules $\cal{E}_{\nu, \gamma}$, and the evidence for \Cref{mainconj:2} arising from twisted $\bbb{G}$ of rank $2$.

In the notation of \Cref{sec:hc}, we henceforth assume that the underlying root datum $\Gamma_\bbb{G}$ is irreducible, and that $\theta$ is an automorphism of its Dynkin diagram that represents $[f]_\bbb{G}$.
Let $\delta$ be the order of $\theta$, and let $\bbb{A} \leq \bbb{G}$ be the maximal torus defined by $[f]_\bbb{A} = \{\theta\}$.

Fix a rational number $\nu > 0$.
We will use the rational DAHA of $W^\bbb{G}_\bbb{A}$ with parameter $\vec{\nu} = (\nu_s)_s$ defined by
\begin{align}\label{eq:nu}
\nu_s = \left\{\begin{array}{ll}
	\nu
		&\text{$s$ arises from one of the longest roots}\\
		&\quad\text{in the relative root system for $(\bbb{G}, \bbb{A})$},\\
	\delta\nu
		&\text{else}.
\end{array}\right.
\end{align}
Note that if $\bbb{G}$ is split, then $\nu_s= \nu$ for all reflections $s$.
We also choose the vectors $\alpha_H$ and $\alpha_H^\vee$ of \S\ref{subsec:daha} to be the roots and coroots of the relative root system for $(\bbb{G}, \bbb{A})$.
Doing so gives $\langle \alpha_H, \alpha_H^\vee\rangle = 2$ for all hyperplanes $H$.

Altogether, our definition of the rational DAHA recovers the definition in~\cite[\S{4.2.2}]{oy}.
To abbreviate, we will write $D_{\bbb{G}, \bbb{A}}^\rat(\vec{\nu})$ and $\sf{O}_{\bbb{G}, \bbb{A}}^\rat(\vec{\nu})$ in place of $D_{W^\bbb{G}_\bbb{A}}^\rat(\vec{\nu})$ and $\sf{O}_{W^\bbb{G}_\bbb{A}}^\rat(\vec{\nu})$.

\subsection{}

We now shift our geometric setting from finite fields to the field $\bb{C}(\!(z)\!)$.

Let $G$ be a simply-connected, connected quasi-split algebraic group over $\bb{C}(\!(z)\!)$ defined by the same root datum and the same Dynkin-diagram automorphism as $\bbb{G}$.
Let $A \subseteq G$ be the maximal torus corresponding to $\bbb{A}$.
Explicitly,
\begin{align}
G = \Res_{\bb{C}(\!(z)\!)}^{\bb{C}(\!(z^{\frac{1}{\delta}})\!)}(\bar{G} \otimes \bb{C}(\!(z^{\frac{1}{\delta}})\!))^\theta
	\quad\text{and}\quad
	A = \Res_{\bb{C}(\!(z)\!)}^{\bb{C}(\!(z^{\frac{1}{\delta}})\!)}(\bar{A} \otimes \bb{C}(\!(z^{\frac{1}{\delta}})\!))^\theta,
\end{align}
where $\Res$ is Weil restriction, and $\bar{G}$ and $\bar{A}$ are the complex algebraic groups defined by the (split) root data.

\begin{rem}
Note that in~\cite[\S{2.2.2}]{oy}, the variable $G$ is the same as ours, but their $T$ is our $A$, and other notations do not match.
For instance, their $e$ is  our $\delta$.
\end{rem}

As explained in~\cite[\S{2.2.3}]{oy}, there is a smooth $\bb{C}[\![z]\!]$-group scheme $\bb{K}$ with generic fiber $G$ and connected special fiber: that is, an integral model of $G$ over $\bb{C}[\![z]\!]$.
We define $\fr{g}$ to be the Lie algebra of $\bb{K}$.
Similarly, there is an integral model of $A$ over $\bb{C}[\![z]\!]$ with connected special fiber, which we denote by $\bb{A}$.

If we fix a $\theta$-stable system of simple roots in $\Gamma_\bbb{G}$, then its orbits under $\theta$ are in biijection with the simple roots of $G$ with respect to $A$.
In particular, $(X^\vee)_\theta \simeq A(\bb{C}(\!(z)\!))/\bb{A}(\bb{C}[\![z]\!])$, where $(-)_\theta$ means the $\theta$-coinvariant quotient.
The $\bb{R}$-span of $(X^\vee)_{\theta, \free} \vcentcolon= (X^\vee)_\theta/(X^\vee)_{\theta, \tors}$ can be identified with the apartment for $A$ in the building of $G$, once we identify the origin with the point corresponding to $\bb{K}$.
The facets of the apartment cut out by the affine roots of $G$ with respect to $A$ are in bijection with the parahoric subgroups of $G$ containing $\bb{A}$.

Let $LG$ be the loop group defined by $LG(R) = G(R[\![z]\!][z^{-1}])$ for any $\bb{C}$-algebra $R$.
Let $\bb{I} \subseteq \bb{K}$ be the Iwahori subgroup defined by the simple roots of $(G, A)$ above, and let $I \subseteq LG$ be the corresponding sub-ind-group, so that $I(R) = \bb{I}(R[\![z]\!])$.
Recall that the affine flag variety of $G$ is the fpqc quotient $\Flag = LG/I$.

Write $\nu = \frac{d}{m}$ in lowest terms, with $d, m > 0$.
Let $\rho^\vee$ be the half-sum of the positive coroots of $(G, A)$.
Then there is a $\bb{G}_m$-action on $LG$ defined by 
\begin{align}
c \cdot g(z) = \Ad(c^{-2d\rho^\vee})g(c^{2m}z).
\end{align}
It descends to an action on $\Flag$, and also, the loop Lie algebra $L\fr{g} \vcentcolon= \Lie(LG)$.
If $\gamma \in L\fr{g}$ is an eigenvector for this action, then the \dfemph{affine Springer fiber}
\begin{align}
\Flag^\gamma = \{[g] \in LG/I \mid \Ad(g^{-1})\gamma \in \Lie(I)\}
\end{align}
is $\bb{G}_m$-stable.
We write $L\fr{g}_\nu$ for the weight-$2d$ eigenspace of $L\fr{g}$ under the $\bb{G}_m$-action, and $L\fr{g}_\nu^\rs \subseteq L\fr{g}_\nu$ for its open locus of generically regular semisimple elements.
Recall that if $\gamma$ is generically regular semisimple, then $\Flag^\gamma$ is finite-dimensional by~\cite[Prop.\ 1]{kl}.

Let $\Br_{\nu, \gamma}$ be the topological fundamental group of $L\fr{g}_\nu^\rs(\bb{C})$ with basepoint $\gamma$.
If $m$ is a regular number for $\bbb{G}$ in the sense of \S\ref{subsec:regular}, then~\cite[\S{3.3.6}]{oy} shows that $\Br_{\nu, \gamma}$ is the braid group of $W^\bbb{G}_\bbb{T}$ for any $\Phi_m$-split maximal torus $\bbb{T} \leq \bbb{G}$.
Henceforth, we fix such a generic torus.
In \emph{loc.\ cit.}, $W^\bbb{G}_\bbb{T}$ is called the \dfemph{little Weyl group} for $\bar{\nu} \vcentcolon= \nu \pmod{\bb{Z}}$.

\subsection{}

Henceforth, we fix $\gamma \in L\fr{g}_\nu^\rs(\bb{C})$.
Let $T_\gamma \subseteq LG$ be the centralizer of $\gamma$ in $LG$.
Let $T_{0, \gamma}$ be the intersection of $T_\gamma$ with the connected reductive group $G_0$ whose Lie algebra is the weight-$0$ eigenspace of the $\bb{G}_m$-action on $L\fr{g}$.
Then the commuting actions of $\bb{G}_m$ and $G_0$ on $LG$ descend to commuting actions of $\bb{G}_m$ and $T_{0, \gamma}$ on $\Flag^\gamma$.
Note that $T_{0, \gamma}$ corresponds to $S_a$ in~\cite{oy}.

We say that $w\theta \in W_{\Gamma_\bbb{G}}\theta$ is \dfemph{elliptic} if and only if its only fixed point on $X^\vee \otimes \bb{Q}$ is zero.
We say that it is \dfemph{regular elliptic} if and only if it is both regular and elliptic; in this case, its order is called a \dfemph{regular elliptic number} for $\bbb{G}$.
This definition then agrees with the definitions in~\cite{oy, vv}: \emph{e.g.}, by the discussion in~\cite[\S{3.2}]{oy}.
If $m$ is a regular elliptic number for $\bbb{G}$, then $\Flag^\gamma$ is a projective scheme by~\cite[Cor.\ 2]{kl}~\cite[Lem.\ 5.2.3]{oy}, and $T_{0,\gamma}$ is finite by~\cite[Lem.\ 3.3.5(3)]{oy} and the definition of ellipticity.

\subsection{}

Henceforth, we assume that $m$ is a regular elliptic number for $\bbb{G}$.
We write $\ur{H}_{\bb{G}_m}^\ast(-)$ to denote $\bb{G}_m$-equivariant singular cohomology with $\bb{C}$-coefficients.
In particular, we write $\epsilon \in \ur{H}_{\bb{G}_m}^2(\point)$ for a generator of $\ur{H}_{\bb{G}_m}^\ast(\point)$.

As we vary $\gamma$, the groups $T_{0,\gamma}$ form a finite group scheme over $L\fr{g}_\nu^\rs$, and the vector spaces $\ur{H}_{\bb{G}_m}^\ast(\Flag^\gamma)$ form a local system on which this group scheme acts.
In particular, $\Br_{\nu, \gamma}$ acts on the invariant subspace $\ur{H}_{\bb{G}_m}^\ast(\Flag^\gamma)^{T_{0,\gamma}}$.

When $\bbb{G}$ is split, meaning $\theta = 1$, Oblomkov--Yun endow $\ur{H}_{\bb{G}_m}^\ast(\Flag^\gamma)^{T_{0,\gamma}}$ with an increasing, $\Br_{\nu, \gamma}$-stable \dfemph{perverse filtration} $\sf{P}_{\leq \ast}$.
The precise construction uses an equivariant Ng\^{o}-type comparison between $\Flag^\gamma$ and a fiber of a twisted, parabolic Hitchin system over a stacky projective line; in the Hitchin setting, the filtration is defined using perverse truncation of the Hitchin sheaf complex and the Ng\^{o} support theorem.
We refer to \S{8.3.6} of \emph{ibid.}\ for details.
Moreover, Oblomkov--Yun construct a $D_{\bbb{G}, \bbb{A}}^\rat(\vec{\nu})$-action on the bigraded vector space
\begin{align}
\cal{E}_{\nu, \gamma} \vcentcolon= \gr_\ast^\sf{P} \ur{H}_{\bb{G}_m}^\ast(\Flag^\gamma)^{T_{0,\gamma}}|_{\epsilon \to 1}
\end{align}
that commutes with the $\Br_{\nu, \gamma}$-action~\cite[Thm.\ 8.2.3(1)]{oy}.
What follows are Theorem 8.2.3(2) and Conjecture 8.2.5 of \emph{ibid.}

\begin{thm}[Oblomkov--Yun]\label[thm]{thm:oy}
In the setup above, with $\nu \in \bb{Q}_{> 0}$ such that its lowest denominator is a regular elliptic number for $\bbb{G}$, and $\vec{\nu}$ defined by \eqref{eq:nu}, the $\Br_{\nu, \gamma}$-invariants of $\cal{E}_{\nu, \gamma}$ form the simple $D_{\bbb{G}, \bbb{A}}^\rat(\vec{\nu})$-module $L_{\vec{\nu}}(1)$.
\end{thm}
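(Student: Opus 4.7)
The plan is to isolate $\cal{E}_{\nu,\gamma}^{\Br_{\nu,\gamma}}$ as a direct summand of $\cal{E}_{\nu,\gamma}$ under the commuting $(D_\nu^\rat, \bb{C}[\Br_{\nu,\gamma}])$-bimodule structure, to verify that this summand lies in category $\sf{O}_{\bbb{G},\bbb{A}}^\rat(\vec{\nu})$, and then to identify it with $L_\nu(1)$ by pinning down its highest weight and proving simplicity. The key input is Theorem~\ref{thm:oy}(1) (already stated as \cite[Thm.~8.2.3(1)]{oy}), which provides the DAHA action on $\cal{E}_{\nu,\gamma}$ commuting with the monodromy; our task is to extract the correct isotypic piece.

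First I would construct a distinguished vector $v_0 \in \cal{E}_{\nu,\gamma}^{\Br_{\nu,\gamma}}$ from the unit class $1 \in \ur{H}_{\bb{G}_m}^0(\Flag^\gamma)^{G_{0,\gamma}}$, which lives in the lowest piece of the perverse filtration $\sf{P}_{\leq\ast}$ by the standard cohomological normalization of perverse truncation. Because the monodromy is unipotent on $\ur{H}^0$, the class $v_0$ is braid-invariant; because of its degree, it is annihilated by the lowering operators $V^\vee \subset D_\nu^\rat$. A direct computation of the Springer $W_{\bbb{G},\bbb{A}}$-action on the generator of $\ur{H}^0$ shows that $v_0$ spans a copy of the trivial character $1$. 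By the universal property of Vermas, this produces a morphism $\Delta_{\vec{\nu}}(1) \to \cal{E}_{\nu,\gamma}$ whose image lies in the braid invariants.

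Next, I would argue that $\cal{E}_{\nu,\gamma}^{\Br_{\nu,\gamma}}$ is itself an object of $\sf{O}_{\bbb{G},\bbb{A}}^\rat(\vec{\nu})$: the bigrading bounds Euler weights below on each homogeneous piece, and the finiteness of $\Flag^\gamma$ in the regular elliptic case (via \cite[Cor.~2]{kl} and \cite[Lem.~5.2.3]{oy}) ensures the required finiteness properties. To pass from $\Delta_{\vec{\nu}}(1)$ to $L_{\vec{\nu}}(1)$, I would apply the $\sf{KZ}$ functor of \Cref{subsec:kz}. Since $\sf{KZ}$ is exact, monoidal, and fully faithful on projectives, and since $\sf{KZ}(L_{\vec{\nu}}(1))$ is the head of $\sf{KZ}(\Delta_{\vec{\nu}}(1))$ under the Brauer specialization (in the Weyl-group case, identified with the trivial character of the relevant Hecke algebra by \Cref{thm:ggor}), the image of the $\Br_{\nu,\gamma}$-invariants under $\sf{KZ}$ must be exactly the trivial isotypic component. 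This forces the surjection $\Delta_{\vec{\nu}}(1) \twoheadrightarrow \cal{E}_{\nu,\gamma}^{\Br_{\nu,\gamma}}$ to factor through $L_{\vec{\nu}}(1)$, and the matching of trivial isotypic components on both sides yields equality.

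The hardest step will be ruling out extra braid-invariant vectors outside the image of $\Delta_{\vec{\nu}}(1)$, i.e.\ proving $\cal{E}_{\nu,\gamma}^{\Br_{\nu,\gamma}}$ is no larger than $L_{\vec{\nu}}(1)$. This is where the global geometry is essential: one leverages the $\bb{G}_m$-equivariant Ng\^{o}-type comparison between $\Flag^\gamma$ and a fiber of the twisted parabolic Hitchin system on a stacky $\bb{P}^1$, together with the Ng\^{o} support theorem, to control which simple perverse summands of the Hitchin complex contribute to $\gr_\ast^\sf{P}$ after taking $G_{0,\gamma}$-invariants and braid invariants. Only the summand corresponding to the trivial local system on the elliptic locus survives, and its stalk recovers exactly $L_{\vec{\nu}}(1)$. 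In the non-split case, the same outline applies once the DAHA action of \cite[Conj.~8.2.5]{oy} is available; the obstacle is transporting the support-theoretic step through the Weil restriction setup used to define $G$ from $\bar G$ and $\theta$.
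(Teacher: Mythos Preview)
The paper does not prove this theorem. Immediately before the statement, the authors write ``What follows are Theorem 8.2.3(2) and Conjecture 8.2.5 of \emph{ibid.}'', meaning \Cref{thm:oy} is quoted directly from \cite[Thm.~8.2.3(2)]{oy} with no argument supplied here. So there is nothing in the paper to compare your proposal against: the correct ``proof'' in this context is a one-line citation.

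As for the content of your sketch: the broad shape (produce a highest-weight vector from the fundamental class, show the invariants lie in category $\sf{O}$, then use the global Hitchin/Ng\^{o} geometry to bound the size) is in the spirit of \cite{oy}, but several steps as written do not hold together. You assert a surjection $\Delta_{\vec{\nu}}(1) \twoheadrightarrow \cal{E}_{\nu,\gamma}^{\Br_{\nu,\gamma}}$ before you have established that the image of your map is all of the invariants; this is precisely the ``hardest step'' you postpone, so you cannot use the surjection in the preceding paragraph. Your use of the $\sf{KZ}$ functor is also off: $\sf{KZ}$ lands in modules for $H_{\bbb{G},\bbb{A},1}(e^{2\pi i\nu})$, not for $H_{\bbb{G},\bbb{T},1}(1)$ or for $\bb{C}\Br_{\nu,\gamma}$, so comparing ``trivial isotypic components'' across these two different algebras does not give the equality you claim. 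In \cite{oy} the identification with $L_\nu(1)$ is obtained not via $\sf{KZ}$ but via the perverse-filtration/support-theorem analysis on the Hitchin side, which simultaneously produces the DAHA action and controls which summands survive; your outline separates these, and the separation is where the gap lies.
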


\begin{conj}[Oblomkov--Yun]\label[conj]{conj:oy}
The constructions above, and \Cref{thm:oy}, extend from split $\bbb{G}$ to quasi-split $\bbb{G}$, while still satisfying the further properties of $\sf{P}_{\leq \ast}$ in~\cite[Thm.\ 8.2.3(1)]{oy}.
\end{conj}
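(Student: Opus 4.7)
The plan is to descend from the split group $\bar{G}$ equipped with its Dynkin diagram automorphism $\theta$ of order $\delta$, using $\theta$-fixed points throughout. First, I would construct a $\theta$-twisted parabolic Hitchin system over a stacky projective line, paralleling the untwisted construction in \cite[\S{6}--\S{7}]{oy}. The required input is a $\theta$-equivariant analogue of the Ng\^{o}-type comparison between $\Flag^\gamma$ and a global Hitchin fiber; since both sides of that comparison are built from loop-group objects that carry a natural $\theta$-action, the comparison should descend to $\theta$-invariants, using the formalism of twisted loop groups developed by Heinloth--Ng\^{o}--Yun and Zhu.

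Next, I would obtain the perverse filtration $\sf{P}_{\leq \ast}$ on $\ur{H}_{\bb{G}_m}^\ast(\Flag^\gamma)^{G_{0, \gamma}}$ by transporting Oblomkov--Yun's construction through this comparison. The key input is a version of Ng\^{o}'s support theorem for the twisted parabolic Hitchin system; analogous formulations in related settings appear in work of Chi and of Bouthier--Ng\^{o}--Sakellaridis. The $\Br_{\nu, \gamma}$-stability of $\sf{P}_{\leq \ast}$ then follows formally from monodromy, just as in the split case.

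Third, I would construct the action of $D_{\bbb{G}, \bbb{A}}^\rat(\vec{\nu})$ on $\cal{E}_{\nu, \gamma}$. The new feature is that $\vec{\nu}$ takes two distinct values $\nu$ and $\delta\nu$ on reflections associated with the long and short roots respectively. This asymmetry should emerge naturally from the descent: orbits of $\theta$ on the roots of $\bar{G}$ partition into size-$1$ orbits (producing long roots of $\bbb{G}$) and size-$\delta$ orbits (producing short roots), with each size-$\delta$ orbit collapsing to a single reflection hyperplane in the relative root system that inherits a weighting by $\delta$. Verifying the defining relations of $D_{\bbb{G}, \bbb{A}}^\rat(\vec{\nu})$ would then follow the split-case argument of \cite[\S{8}]{oy}, applied to the $\theta$-invariant subspaces. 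The identification $L_{\vec{\nu}}(1) = \cal{E}_{\nu, \gamma}^{\Br_{\nu, \gamma}}$ should reduce to checking irreducibility of a canonical quotient of $\Delta_{\vec{\nu}}(1)$, exactly as in the proof of \Cref{thm:oy}, modulo input about the characters of Verma modules over $D_{\bbb{G}, \bbb{A}}^\rat(\vec{\nu})$ in the unequal-parameter regime.

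The hard part, and the main obstacle, is the Ng\^{o}-style support theorem for the twisted Hitchin system: the split-case proof relies on Chevalley restriction and codimension estimates on elliptic loci that are delicate to transport to the twisted setting, where $\delta > 1$ introduces additional ramification data at the stacky points of the base curve. A secondary but nontrivial bookkeeping task is to check that the weights of the $\bb{G}_m$-action on $L\fr{g}$, when sorted by $\theta$-orbit length across long and short roots, produce exactly the parameters $\nu$ and $\delta\nu$ demanded by the definition of $D_{\bbb{G}, \bbb{A}}^\rat(\vec{\nu})$; I expect to address this in parallel with the geometric descent.
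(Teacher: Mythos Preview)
This statement is a \emph{conjecture}, not a theorem: the paper presents it verbatim as Conjecture 8.2.5 of \cite{oy}, and offers no proof. What the paper does instead is record partial evidence: in \S\ref{subsec:evidence} it notes that Oblomkov--Yun themselves verified the conjecture in several non-split rank-$\leq 2$ cases by direct computation (the cases listed under (OY1) and (OY2)), and the paper then checks that those computations are consistent with its own \Cref{mainconj:2}. There is no general argument in the paper to compare your proposal against.

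Your outline is a reasonable sketch of how one might attack the conjecture, and you have correctly identified the main obstacle as the twisted support theorem. But you should be aware that you are proposing a strategy for an open problem, not reconstructing a known proof. In particular, the steps you label ``should descend'' and ``should reduce to'' are exactly the places where the split-case arguments of \cite{oy} have not been carried out in the literature; Oblomkov--Yun explicitly left this as a conjecture because the twisted Hitchin comparison and the unequal-parameter DAHA action require genuinely new work, not just bookkeeping. The case-by-case verifications in \cite[\S{9}]{oy} proceed by explicit calculation of equivariant cohomology and perverse filtrations in each low-rank example, not by the uniform descent argument you propose.
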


\subsection{\texorpdfstring{Evidence for Conjecture \ref{mainconj:2}}{Evidence for Conjecture 2}}\label{subsec:evidence}

In Section 9 of their paper, Oblomkov--Yun verify~\Cref{conj:oy} for various non-split cases where $\Gamma_\bbb{G}^\theta$ has rank $\leq 2$ and $\nu = \frac{1}{m}$.
As they remark, the other possibilities for $\nu$ can be reduced to this one by~\cite[Prop.\ 5.5.8]{oy}.
They arrive at the following results and conjectures:
\begin{enumerate}
\item[(OY1)]
	 	The $\Br_{\nu, \gamma}$-action on $\cal{E}_{\nu, \gamma}$ is trivial for:
		\begin{enumerate}
		\item 	The cases where $m$ is the twisted Coxeter number~\cite[Ex.\ 8.2.6]{oy}.
		
		\item 	The cases where
				\begin{align}
				(\sf{type}, m) = ({}^2A_2, 2), (C_2, 2), (G_2, 3), (G_2, 2).
				\end{align}
		\end{enumerate}
		Here, $\cal{E}_{\nu, \gamma}$ is the $D_{\bbb{G}, \bbb{A}}^\rat(\vec{\nu})$-module $L_{\vec{\nu}}(1)$ by \Cref{thm:oy}.

\item[(OY2)] 	
		The $\Br_{\nu, \gamma}$-action on $\cal{E}_{\nu, \gamma}$ can be computed for
		\begin{align}
		(\sf{type}, m) = ({}^2A_3, 2), ({}^2A_4, 2), ({}^3D_4, 6), ({}^3D_4, 3).
		\end{align}
		Here, Oblomkov--Yun state a conjecture for the $(D_{\bbb{G}, \bbb{A}}^\rat(\vec{\nu}), \Br_{\nu, \gamma})$-bimodule structure in terms of a direct sum of simple bimodules, but the existence of the perverse filtration on the whole cohomology remains open.
		
\end{enumerate}

To explain what we can verify:
let 
\begin{equation}\label{eq:bimod-1}
[\cal{E}_{\nu, \gamma}] = t^{-{\dim(\Flag^\gamma)}}\sum_{i, j} {(-1)^i} t^j \gr_j^\sf{P} \ur{H}_{\bb{G}_m}^i(\Flag^\gamma)^{T_{0,\gamma}}|_{\epsilon \to 1} \in \sf{K}_0(W^\bbb{G}_\bbb{A})[t^{\pm 1}].
\end{equation}
In the notation of~\S\ref{subsec:bij}, recall that \Cref{mainconj:2}(2) predicts that
\begin{align}\label{eq:bimodule}
[\cal{E}_{\nu, \gamma}]
	\stackrel{?}{=}
	\sum_{\rho \in \Uch(\bbb{G})_{\bbb{A}, 1} \cap \Uch(\bbb{G})_{\bbb{T}, 1}}
		\varepsilon_{\bbb{T}, 1}^\bbb{G}(\rho)
		[\Delta_{\vec{\nu}}(\chi_{\bbb{A}, 1}^\bbb{G}(\rho)) \otimes \chi_{\bbb{T}, 1, 1}^\bbb{G}(\rho)]
\end{align}
in $\sf{K}_0(\bb{C}W^\bbb{G}_\bbb{A} \otimes H^\bbb{G}_{\bbb{T}, 1}(1)^\op)[\![t]\!][t^{-1}]$, and via \Cref{thm:bmm}(2d), implies 
that
\begin{align}\label{eq:trinh-detail}
[\cal{E}_{\nu, \gamma}]
	\stackrel{?}{=}
	\sum_{\rho \in \Uch(\bbb{G})_{\bbb{A}, 1} \cap \Uch(\bbb{G})_{\bbb{T}, 1}}
		\Deg_\rho(e^{2\pi i\nu}) [\Delta_{\vec{\nu}}(\chi_{\bbb{A}, 1}^\bbb{G}(\rho))]
\end{align}
in $\sf{K}_0(\bb{C}W^\bbb{G}_\bbb{A})[\![t]\!][t^{-1}]$.
We will prove:

\begin{prop}\label[prop]{prop:evidence}
\begin{enumerate}
\item 	In every case of (OY1)--(OY2), the formulas of~\cite{oy} imply or would imply \eqref{eq:trinh-detail}.

\item 	In every case of (OY1)--(OY2), the $\Br_{\nu, \gamma}$-action factors through $H^\bbb{G}_{\bbb{T}, 1}(1)$, and the formulas of~\cite{oy} imply or would imply \eqref{eq:bimodule}.


\end{enumerate}
\end{prop}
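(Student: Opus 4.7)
The plan is to verify the proposition in each case listed in (OY1), (OY2) by matching the explicit computations in \cite[\S{9}]{oy} against the right-hand sides of \eqref{eq:trinh-detail} and \eqref{eq:bimodule}, using character-theoretic data for the relevant groups $\bbb{G}$ of rank $\leq 2$. I would first tabulate, in each case, (i) the intersection $\Uch(\bbb{G}, \bbb{A}, 1, \bbb{T}, 1)$, (ii) the images $\chi_{\bbb{A},1}^\bbb{G}(\rho) \in \Irr(W_{\bbb{G},\bbb{A}})$ and $\chi_{\bbb{T},1}^\bbb{G}(\rho) \in \Irr(W_{\bbb{G},\bbb{T}})$ for $\rho$ in that intersection, (iii) the signs $\varepsilon_{\bbb{T},1}^\bbb{G}(\rho)$, and (iv) the values $\Deg_\rho(e^{2\pi i\nu})$ via the generic degree polynomials.

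For part (1), I would forget the braid structure of $[\cal{E}_{\nu,\gamma}]$ and compare its $W_{\bbb{G},\bbb{A}}$-graded character with the RHS of \eqref{eq:trinh-detail}. Expanding each Verma by \eqref{eq:verma}--\eqref{eq:h} reduces the identity to an equality of $t$-graded virtual $W_{\bbb{G},\bbb{A}}$-representations, checkable componentwise on each irreducible character by means of the OY formulas for the bigraded $W_{\bbb{G},\bbb{A}}$-module structure. In case (OY1a), the formulas are unconditional and the comparison collapses, after cancellation of the common factor $[\Sym(V)]$, to a finite identity in $\sf{K}_0(\bb{C}W_{\bbb{G},\bbb{A}})$. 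In the (OY2) cases, the same arithmetic applies once \Cref{conj:oy} is granted.

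For part (2), the factoring of the $\Br_{\nu,\gamma}$-action through $H_{\bbb{G},\bbb{T},1}(1)$ is automatic in case (OY1a) and in the $({}^2A_2, 2)$ case, since OY show the action there is trivial and hence factors through any quotient of the group algebra of $\Br_{\nu,\gamma}$. For the $({}^2A_3, 2)$ and $({}^2A_4, 2)$ cases, I would compute the Hecke parameters of $H_{\bbb{G},\bbb{T},1}(1)$ predicted by \Cref{conj:hecke-gu} and verified in \Cref{prop:gu}, then check that the generators of $\Br_{\nu,\gamma}$ produced by OY satisfy the corresponding quadratic relations after specialization. The bimodule identity \eqref{eq:bimodule} then follows from matching each simple $H_{\bbb{G},\bbb{T},1}(1)$-isotypic component of $[\cal{E}_{\nu,\gamma}]$ with the corresponding $\rho$-summand, using the Brauer decomposition map of \Cref{sec:blocks} to identify $\chi_{\bbb{T},1}^\bbb{G}(\rho)_1$ as a simple $H_{\bbb{G},\bbb{T},1}(1)$-module.

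The principal obstacle is that in the (OY2) cases, only part of the bimodule structure is known unconditionally: OY conjecture the rest as a direct sum of simple $(D_{\bbb{G},\bbb{A}}^\rat(\vec{\nu}), \Br_{\nu,\gamma})$-bimodules, but the existence of the perverse filtration on the full cohomology remains open. Our matching therefore yields only a conditional identity there. A secondary obstacle is bookkeeping around non-split signs and normalizations: in the twisted types, the specialization $\cal{S}_{\bbb{G},\bbb{T},1}$ coming from \Cref{thm:ldmr}(1) or \Cref{prop:gu} carries subtleties that must be propagated correctly through \Cref{thm:bmm}(2c) to obtain the identity $\Deg_\rho(e^{2\pi i\nu}) = \varepsilon_{\bbb{T},1}^\bbb{G}(\rho)\deg\chi_{\bbb{T},1}^\bbb{G}(\rho)$, which is the bridge that lets part (2) imply the corresponding instance of part (1).
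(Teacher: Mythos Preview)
Your overall strategy---case-by-case verification against the explicit bimodules in \cite[\S{9}]{oy}---is the paper's strategy too, and your handling of (OY1a) and $({}^2A_2,2)$ is essentially correct: triviality of the braid action makes the factoring claim automatic, and the rest is arithmetic with generic degrees and Verma characters.

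There is, however, a genuine gap in your plan for part~(2) in the cases $({}^2A_3,2)$ and $({}^2A_4,2)$. You propose to ``check that the generators of $\Br_{\nu,\gamma}$ produced by OY satisfy the corresponding quadratic relations after specialization.'' But \cite{oy} does not present the braid action in a form that lets you verify Hecke relations directly: the nontrivial braid modules appearing there are described as cohomologies $\ur{H}^1(C_\gamma)$ and $\Lambda^2\ur{H}^1(C_\gamma)_{\mathrm{prim}}$ of families of curves, with the monodromy given only implicitly. The paper closes this gap by invoking the geometric results of Chen--Vilonen--Xue \cite{cvx1,cvx2,cvx3}, which independently identify these monodromy representations with specific simple modules of $H_{\bbb{G},\bbb{T},1}(1)$. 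Without that external input, you cannot conclude the factoring, nor match the braid isotypic pieces to the correct Specht modules under Brauer decomposition.

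A second, smaller point: the paper organizes the bimodule verification through an intermediate lemma (\Cref{lem:multiplicity}) that reduces \eqref{eq:bimodule} to checking, for each $\rho$ and each $\psi$ in a suitable index set, the numerical identity
\[
(L_{\vec{\nu}}(\psi):\Delta_{\vec{\nu}}(\chi_{\bbb{A},1}^\bbb{G}(\rho)))
= \varepsilon_{\bbb{T},1}^\bbb{G}(\rho)\,[\chi_{\bbb{T},1}^\bbb{G}(\rho)_1 : D_\nu(\psi)].
\]
This is dual to your plan of expanding Vermas and matching isotypic components: the paper expresses the simples $L_{\vec{\nu}}(\psi)$ in terms of Vermas via Chmutova's inverse decomposition matrices \cite{chmutova}, then compares the coefficients to Hecke decomposition numbers taken from \cite{gj} or \cite{james}. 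Your approach and the paper's are formally equivalent, but the paper's framing has the advantage that the left-hand side is already tabulated in \cite{chmutova} for dihedral $W_{\bbb{G},\bbb{A}}$, so no new computation of graded $W_{\bbb{G},\bbb{A}}$-characters is needed.
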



Throughout the proof, it will be convenient to write $\bb{C}$ for the trivial $\bb{C}\Br_{\nu, \gamma}$-module.
Outside of the twisted Coxeter case (OY1a), the group $W^\bbb{G}_\bbb{A}$ will be dihedral; we will adopt Chmutova's notation for its irreducible characters~\cite{chmutova}.
To prove (1), we use her $D_{\bbb{G}, \bbb{A}}^\rat(\vec{\nu})$-module character formulas in \emph{ibid.}.
To prove (2), we use the following observation, together with geometric results of Chen--Vilonen--Xue in~\cite{cvx1, cvx2, cvx3}:

\begin{lem}\label[lem]{lem:multiplicity}
Suppose that there exist a subset $\Irr(W^\bbb{G}_\bbb{A})_\nu \subseteq \Irr(W^\bbb{G}_\bbb{A})$ and a map $(\varepsilon_\nu, D_\nu) : \Irr(W^\bbb{G}_\bbb{A})_\nu \to \{\pm 1\} \times \Irr(H^\bbb{G}_{\bbb{T}, 1}(1)) \sqcup \{0\}$ such that:
\begin{enumerate}
\item 	In $\sf{K}_0(\bb{C}W^\bbb{G}_\bbb{A} \otimes H^\bbb{G}_{\bbb{T}, 1}(1)^\op)[\![t]\!][t^{-1}]$, we have
		\begin{align}
		[\cal{E}_{\nu, \gamma}]
			= \sum_{\psi \in \Irr(W^\bbb{G}_\bbb{A})_\nu}
				\varepsilon_\nu(\psi)
				[L_{\vec{\nu}}(\psi) \otimes D_\nu(\psi)].
		\end{align}

\item 	For all $\rho \in \Uch(\bbb{G})_{\bbb{A}, 1} \cap \Uch(\bbb{G})_{\bbb{T}, 1}$ and $\psi \in \Irr(W^\bbb{G}_\bbb{A})_\nu$, we have
		\begin{align}\label{eq:multiplicity}
		(L_{\vec{\nu}}(\psi) : \Delta_{\vec{\nu}}(\chi_{\bbb{A}, 1}^\bbb{G}(\rho)))
		=
		\varepsilon_\nu(\psi)
		\varepsilon_{\bbb{T}, 1}^\bbb{G}(\rho)
		[\chi_{\bbb{T}, 1, 1}^\bbb{G}(\rho) : D_\nu(\psi)],
		\end{align}
		where the left-hand side is the virtual multiplicity of $\Delta_{\vec{\nu}}(\chi_{\bbb{A}, 1}^\bbb{G}(\rho))$ in $L_{\vec{\nu}}(\psi)$ given by the inverse decomposition matrix for $\sf{O}_{\bbb{G}, \bbb{A}}^\rat(\vec{\nu})$. 

\end{enumerate}
Then \eqref{eq:bimodule} holds.
\end{lem}

\begin{proof}
Substitute $[L_{\vec{\nu}}(\psi)] = \sum_\chi (L_{\vec{\nu}}(\psi) : \Delta_{\vec{\nu}}(\chi))[\Delta_{\vec{\nu}}(\chi)]$ into (1).
Apply (2).
\end{proof}

In practice, the sign $\varepsilon_{\bbb{T}, 1}^\bbb{G}(\rho)$ is most easily calculated as the sign of the integer $\Deg_\rho(e^{2\pi i/m})$, via \Cref{thm:bmm}(2d).
It will be convenient to write $\Deg_\chi$ and $\varepsilon_{\bbb{T}, 1}^\bbb{G}(\chi)$ in place of $\Deg_\rho$ and $\varepsilon_{\bbb{T}, 1}^\bbb{G}(\rho)$ whenever $\chi = \chi_{\bbb{A}, 1}^\bbb{G}(\rho)$.

\subsubsection{The Twisted Coxeter Case}

The $\Br_{\nu, \gamma}$-action factors through $H^\bbb{G}_{\bbb{T}, 1}(1)$ because it is trivial.
Hypothesis (1) in \Cref{lem:multiplicity} holds with $\Irr(W^\bbb{G}_\bbb{A})_\nu = \{1\}$ and $(\varepsilon_\nu, D_\nu)(1) = (1, \bb{C})$.
Writing $V$ for the reflection representation of $W^\bbb{G}_\bbb{A}$, we have
\begin{align}\label{eq:bgk-rouquier}
\Deg_\chi(e^{2\pi i/m})
	&= \left\{\begin{array}{ll}
	{(-1)^k}
		&\chi = \bigwedge^k(V),\\
	0
		&\text{else}
	\end{array}\right.\\
	&= (L_{\vec{\nu}}(1) : \Delta_{\vec{\nu}}(\chi)),
\end{align}
where the first equality follows from~\cite[Thm.\ 6.6, Rem.\ 6.9]{bgk} and the second from~\cite[Thm.\ 5.15]{rouquier_08}.
So $(L_{\vec{\nu}}(1) : \Delta_{\vec{\nu}}(\chi)) = \varepsilon_{\bbb{T}, 1}^\bbb{G}(\chi)$.
To show hypothesis (2) in \Cref{lem:multiplicity}, it remains to show that 
\begin{align}\label{eq:multiplicity-1}
[\chi_{\bbb{T}, 1, 1}^\bbb{G}(\rho) : \bb{C}] = 1
	\quad\text{for all $\rho \in \Uch(\bbb{G})_{\bbb{A}, 1} \cap \Uch(\bbb{G})_{\bbb{T}, 1}$}.
\end{align}
This follows from computing $H^\bbb{G}_{\bbb{T}, 1}(1)$ explicitly, by setting $q \to 1$ in~\cite[(7.3)]{lusztig_coxeter}.
See also~\cite[\S{6.2}]{vx}.

\subsubsection{The Case $({}^2A_2, 2)$}\label{subsubsec:oy-9-2}

Here, $W^\bbb{G}_\bbb{A} \simeq W_{A_1} \simeq S_2$, whereas $W^\bbb{G}_\bbb{T} \simeq W_{A_2} \simeq S_3$. 
Again, the $\Br_{\nu, \gamma}$-action factors through $H^\bbb{G}_{\bbb{T}, 1}(1)$ because it is trivial, and hypothesis (1) in \Cref{lem:multiplicity} holds with $\Irr(W^\bbb{G}_\bbb{A})_\nu = \{1\}$ and $(\varepsilon_\nu, D_\nu)(1) = (1, \bb{C})$.

Via the case $(\sf{type}, m) = (A_1, 2)$, we compute $[L_{\vec{\nu}}(1)] = [\Delta_\nu(1)] - [\Delta_\nu(\sf{sgn})]$.
At the same time, we see from~\cite[\S{13.8}]{carter_93} 
that the generic-degree polynomials are $\Deg_1(x) = 1$ and $\Deg_\sf{sgn}(x) = x^3$, whence $\Deg_1(-1) = 1$ and $\Deg_\sf{sgn}(-1) = -1$.
So to show hypothesis (2) in \Cref{lem:multiplicity}, we again reduce to showing \eqref{eq:multiplicity-1}.
Here, it follows from observing that $H^\bbb{G}_{\bbb{T}, 1}(1) \simeq H_{W_{S_3}}(-1)$, where $H_{W_{S_3}}(x)$ is the usual Hecke algebra for $S_3$ as a Weyl group, \emph{cf.}\ \Cref{ex:principal-hecke}.
Alternately, one can use~\cite[Table 7.2]{gj}, or more classically,~\cite{james}.

\subsubsection{The Case $(C_2, 2)$}\label{subsubsec:oy-9-3}

Here, $W^\bbb{G}_\bbb{A} \simeq W^\bbb{G}_\bbb{T} \simeq W_{BC_2}$.
The argument is analogous to that in \Cref{subsubsec:oy-9-2}, except that we compute the left-hand side of \eqref{eq:multiplicity} from~\cite[\S{3.2}]{chmutova}.
We have $H^\bbb{G}_{\bbb{T}, 1}(1) = H_{\bbb{G}^-, \bbb{T}^-, 1}(-1)\simeq \bb{C}W_{BC_2}$ which  implies \eqref{eq:multiplicity-1}.

\subsubsection{The Case $(G_2, 3)$}

Here, $W^\bbb{G}_\bbb{T} \simeq \bb{Z}_6$.
The argument is analogous to the Coxeter case, except that we compute the left-hand side of \eqref{eq:multiplicity} from~\cite[\S{3.2}]{chmutova}.
We get an analogue of \eqref{eq:bgk-rouquier}, except with $V$ replaced by the representation where $w \in W^\bbb{G}_\bbb{A}$ acts on $V$ by $w^2$. So again \eqref{eq:multiplicity-1} follows from the fact that $H^\bbb{G}_{\bbb{T}, 1}(1) = H_{\bbb{G}^-, \bbb{T}^-, 1}(-1)\simeq \bb{Q}_\cyc[\sigma]/((\sigma - 1)^3(\sigma^3 + 1))$, which we get from~\cite[(7.3)]{lusztig_coxeter} because $\bbb{T}^-$ is a Coxeter torus. 




\subsubsection{The Case $(G_2, 2)$}

Here, $W^\bbb{G}_\bbb{A} \simeq W^\bbb{G}_\bbb{T} \simeq W_{G_2}$.
The argument is analogous to that in \Cref{subsubsec:oy-9-3}.

\subsubsection{The Case $({}^2A_3, 2)$}\label{subsubsec:oy-9-4}

Here, $W^\bbb{G}_\bbb{A} \simeq W_{BC_2}$ and $W^\bbb{G}_\bbb{T} \simeq W_{A_3} \simeq S_4$.

Section 9.4 of~\cite{oy} predicts an isomorphism of $(D_{\bbb{G}, \bbb{A}}^\rat(\vec{\nu}), \bb{C}\Br_{\nu, \gamma})$-bimodules
\begin{align}
\cal{E}_{\nu, \gamma} \stackrel{?}{\simeq} L_{\vec{\nu}}(1) \otimes \bb{C} \oplus L_{\vec{\nu}}(\varepsilon_1) \otimes \ur{H}^1(C_\gamma),
\end{align}
where $\ur{H}^1(C_\gamma)$ is the simple $\bb{C}\Br_{\nu, \gamma}$-module in cohomological degree $1$ formed by the monodromy of a certain family of genus-$1$ curves.

We can compute the left-hand side of \eqref{eq:multiplicity} from~\cite[\S{3.2}]{chmutova}.
We can again determine the generic-degree polynomials, and hence the signs $\varepsilon_{\bbb{T}, 1}^\bbb{G}(\chi)$, from~\cite[\S{13.8}]{carter_93}.

The Hecke algebra $H^\bbb{G}_{\bbb{T}, 1}(1)$ has two simple modules, both in the principal block.
One is trivial.
The other arises from the $\Br_{\nu, \gamma}$-action on $\ur{H}^1(C_\gamma)$ above, by~\cite[\S{2.3}]{cvx1} combined with~\cite{cvx2}.
Thus the $\Br_{\nu, \gamma}$-action factors through $H^\bbb{G}_{\bbb{T}, 1}(1)$.
Hypothesis (1) of \Cref{lem:multiplicity} holds with $\Irr(W^\bbb{G}_\bbb{A})_\nu = \{1, \varepsilon_1\}$ and $(\varepsilon_\nu, D_\nu)(1) = (1, \bb{C})$ and $(\varepsilon_\nu, D_\nu)(\varepsilon_1) = (-1, \ur{H}^1(C_\gamma))$.
Finally, the multiplicities on the right-hand side of \eqref{eq:multiplicity} can be computed from the table labeled $D_{(2)}$ in~\cite[184]{gj}, or from~\cite{james}.
In this way, we can verify hypothesis (2) in \Cref{lem:multiplicity}.

\subsubsection{The Case $({}^2A_4, 2)$}

Here, $W^\bbb{G}_\bbb{A} \simeq W_{BC_2}$ and $W^\bbb{G}_\bbb{T} \simeq W_{A_4} \simeq S_5$.

Section 9.5 of~\cite{oy} predicts an isomorphism of bimodules
\begin{align}
\cal{E}_{\nu, \gamma} \stackrel{?}{\simeq} L_{\vec{\nu}}(1) \otimes \bb{C} \oplus L_{\vec{\nu}}(\varepsilon_2) \otimes \textstyle\bigwedge^2 \,\ur{H}^1(C_\gamma)_\ur{prim},
\end{align}
where $\bigwedge^2\, \ur{H}^1(C_\gamma)_\ur{prim}$ is the simple $\bb{C}\Br_{\nu, \gamma}$-module  in cohomological degree $2$ formed by a certain summand of the monodromy of a family of genus-$2$ curves.

Again, we compute the left-hand side of \eqref{eq:multiplicity} from~\cite[\S{3.2}]{chmutova} and the signs $\varepsilon_{\bbb{T}, 1}^\bbb{G}(\chi)$ from~\cite[\S{13.8}]{carter_93}.

The Hecke algebra $H^\bbb{G}_{\bbb{T}, 1}(1)$ again has two simple modules, both principal.
One is trivial, and the other arises from the $\Br_{\nu, \gamma}$-action on $\bigwedge^2\, \ur{H}^1(C_\gamma)_\ur{prim}$ by~\cite[(4.18)]{cvx3} combined with~\cite{cvx2}.
Thus the $\Br_{\nu, \gamma}$-action factors through $H^\bbb{G}_{\bbb{T}, 1}(1)$.
The rest of the verification of \eqref{eq:bimodule} is analogous to that in \Cref{subsubsec:oy-9-4}, except that $\varepsilon_\nu(\bigwedge^2\, \ur{H}^1(C_\gamma)_\ur{prim}) = 1$.

\subsubsection{The Case $({}^3D_4, 6)$}\label{subsubsec:oy-9-8}

Here, $W^\bbb{G}_\bbb{A} \simeq W_{G_2}$, and $W^\bbb{G}_\bbb{T} \simeq G_4$ in Shephard--Todd notation for complex reflection groups. 
Explicitly, 
\begin{align}
G_4 = \langle s, t \mid s^3 = t^3 = (st)^3 = 1\rangle,
\end{align}
so $G_4$ has the same braid group as $W_{A_2} \simeq S_3$.

Section 9.8 of~\cite{oy} predicts an isomorphism of bimodules
\begin{align}
\cal{E}_{\nu, \gamma} \stackrel{?}{\simeq} L_{\vec{\nu}}(1) \otimes \bb{C} \oplus L_{\vec{\nu}}(\varepsilon_1) \otimes M_\gamma,
\end{align}
where $M_\gamma$ is the simple $\bb{C}\Br_{\nu, \gamma}$-module in cohomological degree $0$ formed by the $2$-dimensional simple module of its quotient $\bb{C}S_3$.

We can use~\cite[\S{3.2}]{chmutova} and~\cite[\S{13.8}]{carter_93} to verify \eqref{eq:trinh-detail}.
Our parameter $\vec{\nu}$ corresponds to the parameters $(\tfrac{1}{2}, \tfrac{1}{6})$ in Chmutova's notation.
Explicitly,
\begin{align}
[L_{\vec{\nu}}(1)] + 2[L_{\vec{\nu}}(\varepsilon_1)]
	&=
		[\Delta_{\vec{\nu}}(1)]
			+ 2[\Delta_{\vec{\nu}}(\varepsilon_1)]
			+ 2[\Delta_{\vec{\nu}}(\varepsilon_2)]
			+ [\Delta_{\vec{\nu}}(\sf{sgn})]
			- 3[\Delta_{\vec{\nu}}(\tau_1)].
\end{align}

\subsubsection{The Case $({}^3D_4, 3)$}

Here, $W^\bbb{G}_\bbb{A} \simeq W_{G_2}$ and $W^\bbb{G}_\bbb{T} \simeq G_4$ once again.
Section 9.9 of~\cite{oy} predicts an isomorphism of bimodules
\begin{align}
\cal{E}_{\nu, \gamma} \stackrel{?}{\simeq} L_{\vec{\nu}}(1) \otimes \bb{C} \oplus L_{\vec{\nu}}(\varepsilon_1) \otimes \ur{H}^1(C'_\gamma),
\end{align}
where $\ur{H}^1(C'_\gamma)$ is the simple $\bb{C}\Br_{\nu, \gamma}$-module in cohomological degree $1$ formed by the monodromy of a family of genus-$1$ curves.
The rest of the verification of \eqref{eq:trinh-detail} is analogous to that in \Cref{subsubsec:oy-9-8}, but now $\vec{\nu}$ corresponds to $(1, \tfrac{1}{3})$, and $\varepsilon_\nu(\ur{H}^1(C'_\gamma)) = -1$.
Explicitly,
\begin{align}
[L_{\vec{\nu}}(1)] - 2[L_{\vec{\nu}}(\varepsilon_1)]
	&=
		[\Delta_{\vec{\nu}}(1)]
			- 2[\Delta_{\vec{\nu}}(\varepsilon_1)]
			- 2[\Delta_{\vec{\nu}}(\varepsilon_2)]
			+ [\Delta_{\vec{\nu}}(\sf{sgn})]
			- [\Delta_{\vec{\nu}}(\tau_1)]\\
	&\qquad
			+ 2[\Delta_{\vec{\nu}}(\tau_2)].
\end{align}

\subsection{Comparison to Deligne--Lusztig Bimodules}

To conclude this section, we explain the table of analogies \eqref{eq:analogy} from the introduction.

Henceforth, we return to using $G$ to denote a reductive algebraic group over $\bar{\bb{F}}_q$ arising from $\bbb{G}$.
Thus the group $G$ of our new notation will be most analogous to the group $\bar{G} \otimes \bb{C}(\!(z)\!)$ of our old notation, not the group $G$.

Fix an integer $e > 0$ and a $\Phi_e$-cuspidal pair $(\bbb{L}, \lambda)$ for $\bbb{G}$.
For any prime power $q > 1$, let $H^\bbb{G}_{\bbb{L}, \lambda}(q)$ be the base change of $H^\bbb{G}_{\bbb{L}, \lambda}(x)$ on the left-hand side of \eqref{eq:end}.
Below we state a stronger form of \Cref{conj:end}, based on~\cite[(d-V4) and Prop.~1.4]{bm_93}.
It essentially says that whenever $G, F, L, \lambda_q$ arise from $q, \bbb{G}, \bbb{L}, \lambda$, the maps $(\varepsilon_{\bbb{L}, \lambda}^\bbb{G}, \chi_{\bbb{L}, \lambda}^\bbb{G})$ of \Cref{thm:bmm} are induced by the $(G^F, H^\bbb{G}_{\bbb{L}, \lambda}(q))$-bimodule structure of $R_L^G(\lambda_q)$.


\begin{conj}[Brou\'e--Malle]\label[conj]{conj:end-enhanced}
Assume that \Cref{conj:end} holds for $q, G, F, L, \lambda_q$.
Then in the Grothendieck group $\sf{K}_0(\QL G^F \otimes H^\bbb{G}_{\bbb{L}, \lambda}(q)^\op)$, we have
\begin{align}
\sum_i {(-1)^i}\, \ur{H}_c^i(Y_{L \subseteq P}^G[\lambda_q])
	=
	\sum_{\rho \in \Uch(\bbb{G})_{\bbb{L}, \lambda}}
		\varepsilon_{\bbb{L}, \lambda}^\bbb{G}(\rho)
		[\rho_q \otimes \chi_{\bbb{L}, \lambda, q}^\bbb{G}(\rho)],
\end{align}
where $\rho_q$ corresponds to $\rho \in \Uch(\bbb{G})_{\bbb{L}, \lambda}$, and $\chi_{\bbb{L}, \lambda, q}^\bbb{G}$ corresponds to $\chi \in \Irr(W^\bbb{G}_{\bbb{L}, \lambda})$ under an isomorphism $H^\bbb{G}_{\bbb{L}, \lambda}(q) \simeq \QL W^\bbb{G}_{\bbb{L}, \lambda}$.
\end{conj}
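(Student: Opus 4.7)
The plan is to deduce the full virtual bimodule decomposition from the commuting-actions structure of Conjecture~\ref{conj:end} via Tits deformation, double centralizer, and the Harish-Chandra multiplicity formula encoded in Theorem~\ref{thm:bmm}. First, Conjecture~\ref{conj:end} endows $\cal{M} \vcentcolon= \bigoplus_i \ur{H}_c^i(Y_{L \subseteq P}^G)[\lambda_q]$ with the structure of a $(\bar{\bb{Q}}_\ell G^F, H_{\bbb{G}, \bbb{L}, \lambda}(q))$-bimodule in which the Hecke action is the full $G^F$-commutant. One then needs to decompose the class $\sum_i (-1)^i [\ur{H}_c^i(Y_{L \subseteq P}^G)[\lambda_q]]$ in $\sf{K}_0(\bb{C}G^F \otimes H_{\bbb{G}, \bbb{L}, \lambda}(q)^\op)$.

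Next, I would lift to the generic level to access semisimplicity. Let $K$ be a splitting field for $H_{\bbb{G}, \bbb{L}, \lambda}(x)$. By Theorem~\ref{thm:free} and Tits deformation, there is a canonical bijection $\chi \mapsto \chi^K$ between $\Irr(W_{\bbb{G}, \bbb{L}, \lambda})$ and $\Irr(KH_{\bbb{G}, \bbb{L}, \lambda}(x))$ whose specialization at $x \mapsto q$ agrees with the Brauer decomposition map \eqref{eq:brauer}. Over $K$ the Hecke algebra is semisimple, so the double centralizer theorem yields a unique $(\bar{\bb{Q}}_\ell G^F, KH_{\bbb{G}, \bbb{L}, \lambda}(x))$-bimodule decomposition $[\cal{M}_K] = \sum_\rho [\rho_q \otimes \eta_\rho^K]$, indexed by the $\rho \in \Uch(\bbb{G}, \bbb{L}, \lambda)$ that occur, with each $\eta_\rho^K$ simple. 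Specializing yields $\sum_i (-1)^i [\cal{M}_i] = \sum_\rho [\rho_q \otimes d_q(\eta_\rho^K)]$ in the target Grothendieck group.

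The final step is to identify $d_q(\eta_\rho^K) = \varepsilon_{\bbb{L}, \lambda}^\bbb{G}(\rho)\,\chi_{\bbb{L}, \lambda}^\bbb{G}(\rho)_q$. The virtual $G^F$-multiplicity of $\rho_q$ in $\sum_i (-1)^i [\cal{M}_i]$ equals $\varepsilon_{\bbb{L}, \lambda}^\bbb{G}(\rho) \deg \chi_{\bbb{L}, \lambda}^\bbb{G}(\rho)_q$ by the Harish-Chandra multiplicity formula inherent in the construction of $\chi_{\bbb{L}, \lambda}^\bbb{G}$: for the $\Phi_1$-principal series this is classical (Example~\ref{ex:principal-generic}), while in general it is a sharpening of the mod-$\Phi_e$ identity in Theorem~\ref{thm:bmm}(2c), obtained by combining that identity with the induction compatibility in Theorem~\ref{thm:bmm}(2d) to reduce to the cuspidal case $\rho = \lambda$. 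Comparing total dimensions now forces $\eta_\rho^K = \chi_{\bbb{L}, \lambda}^\bbb{G}(\rho)^K$ up to the sign that appears on specialization.

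The main obstacle is verifying that the tautological correspondence $\rho \mapsto \eta_\rho^K$, arising intrinsically from the bimodule structure of Conjecture~\ref{conj:end}, agrees with the Brou\'e--Malle--Michel bijection $\chi_{\bbb{L}, \lambda}^\bbb{G}$ pinned down case-by-case in \cite{bmm}. For the $\Phi_1$-principal series this matching is Howlett--Lehrer theory; beyond it, bridging the two bijections seems to require additional structural input---notably the Brou\'e--Michel braid-group lift of \cite{bm_97} when $(\bbb{L}, \lambda) = (\bbb{T}, 1)$, and an analogous enrichment of Conjecture~\ref{conj:end} for non-torus Levis. Tracking the signs $\varepsilon_{\bbb{L}, \lambda}^\bbb{G}(\rho)$ also requires care, since each $\rho_q$ should contribute essentially in a definite cohomological parity, and matching this parity against the BMM sign datum is itself nontrivial.
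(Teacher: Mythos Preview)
The statement you are addressing is \emph{Conjecture}~\ref{conj:end-enhanced}: the paper does not prove it and presents it explicitly as open. There is therefore no ``paper's own proof'' to compare against. The paper's only commentary is the sentence preceding the conjecture, which says precisely that this is the assertion that the Brou\'e--Malle--Michel maps $(\varepsilon_{\bbb{L},\lambda}^\bbb{G},\chi_{\bbb{L},\lambda}^\bbb{G})$ of Theorem~\ref{thm:bmm} are \emph{induced by} the bimodule structure under Tits deformation, together with the remark that the $\Phi_1$-split torus case recovers Example~\ref{ex:principal-generic}.

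Your proposal correctly identifies the architecture one would want (commuting actions $\Rightarrow$ double centralizer over a splitting field $\Rightarrow$ specialize), and you correctly isolate the obstruction. But notice that what you call ``the main obstacle''---matching the tautological bijection $\rho\mapsto\eta_\rho^K$ coming from the bimodule with the case-by-case bijection $\chi_{\bbb{L},\lambda}^\bbb{G}$ of \cite{bmm}---is not a step in a proof of the conjecture; it \emph{is} the conjecture. The paper's framing makes this explicit: Conjecture~\ref{conj:end-enhanced} is exactly the claim that these two parametrizations coincide (with the correct signs). So your argument is not circular so much as it is a reformulation: you have unwound the statement to the point where its actual content becomes visible, and then (rightly) stopped. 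Your appeal to Theorem~\ref{thm:bmm}(2c)--(2d) to pin down dimensions is reasonable heuristics, but those results constrain only the mod-$\Phi_e$ degree and the induction behavior, not the specific labeling at $x=q$; they do not by themselves force $\eta_\rho^K=\chi_{\bbb{L},\lambda}^\bbb{G}(\rho)^K$.
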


Note that this conjecture recovers \Cref{ex:principal-generic}, because if $\bbb{A} \leq \bbb{G}$ is a $\Phi_1$-split maximal torus, then $\varepsilon_{\bbb{A}, 1}^\bbb{G} = 1$ uniformly, per \Cref{ex:principal-generic}.

\Cref{conj:end} would imply that for any further integer $m > 0$ and $\Phi_m$-cuspidal pair $(\bbb{M}, \mu)$ giving rise to $M, \mu_q$, we would have
\begin{align}\label{eq:bimodule-dl}
&\sum_{i, j} {(-1)^{i + j}}\, \ur{H}_c^i(Y_{L \subseteq P}^G[\lambda_q]) \otimes_{\QL G^F} \ur{H}_c^j(Y_{M \subseteq Q}^G[\mu_q])\\
	&\qquad=
		\sum_{\rho \in \Uch(\bbb{G})_{\bbb{L}, \lambda} \cap \Uch(\bbb{G})_{\bbb{M}, \mu}}
		\varepsilon_{\bbb{L}, \lambda}^\bbb{G}(\rho)
		\varepsilon_{\bbb{M}, \mu}^\bbb{G}(\rho)
		[\chi_{\bbb{L}, \lambda, q}^\bbb{G}(\rho) \otimes \chi_{\bbb{L}, \lambda, q}^\bbb{G}(\rho)]
\end{align}
in $\sf{K}_0(H^\bbb{G}_{\bbb{L}, \lambda}(q) \otimes H^\bbb{G}_{\bbb{M}, \mu}(q)^\op)$.
(Above, $Q \subseteq G$ is a parabolic subgroup, like $P$.)
This is the bimodule that produces the second row of \eqref{eq:analogy}.

In the notation of~\S\ref{subsec:bij}, consider the $(H^\bbb{G}_{\bbb{A}, 1}(\zeta_m), H^\bbb{G}_{\bbb{T}, 1}(1))$-bimodule
\begin{align}\label{eq:bimodule-oy}
\bar{\cal{E}}_{\nu, \gamma} = \sum_{\rho \in \Uch(\bbb{G})_{\bbb{A}, 1} \cap \Uch(\bbb{G})_{\bbb{T}, 1}}
		\varepsilon_{\bbb{T}, 1}^\bbb{G}(\rho)
		[\chi_{\bbb{A}, 1, \zeta_m}^\bbb{G}(\rho) \otimes \chi_{\bbb{M}, 1, 1}^\bbb{G}(\rho)].
\end{align}
Observe that by Theorem 6.8 of~\cite{ggor}, 
\begin{align} 
\sf{KZ}(\Delta_{\vec{\nu}}(\chi^\bbb{G}_{\bbb{A}, 1}(\rho)) = \chi^\bbb{G}_{\bbb{A}, 1, \zeta_m}(\rho).
\end{align}
So according to this theorem and \Cref{thm:ldmr}(1), we can construct $\bar{\cal{E}}_{\nu, \gamma}$ by applying the KZ functor term by term to the left factors in the conjectural decomposition of $\cal{E}_{\nu, \gamma}$ in \Cref{mainconj:2}(2).
Since $\varepsilon_{\bbb{A}, 1}^\bbb{G}(\rho) = 1$ for all $\rho$, the bimodules \eqref{eq:bimodule-dl} and \eqref{eq:bimodule-oy} only differ in the following ways:
\begin{itemize}
\item 	\eqref{eq:bimodule-dl} works for any $(\bbb{L}, \lambda)$ and $(\bbb{M}, \mu)$, whereas \eqref{eq:bimodule-oy} requires us to take $(\bbb{L}, \bbb{M}) = (\bbb{A}, \bbb{T})$.

\item 	\eqref{eq:bimodule-dl} uses the specializations of $H^\bbb{G}_{\bbb{L}, \lambda}(x)$ and $H^\bbb{G}_{\bbb{M}, \mu}(x)$ at $x = q$, whereas \eqref{eq:bimodule-oy} uses the specializations of $H^\bbb{G}_{\bbb{A}, 1}(x)$ and $H^\bbb{G}_{\bbb{T}, 1}(x)$ at $x = \zeta_m$ and $x = 1$, respectively.

\end{itemize}
This is the content of the analogy in \eqref{eq:analogy}.

\begin{rem}
Using an equivariant K\"unneth formula, it is possible to rewrite the left-hand side of \eqref{eq:bimodule-dl} in terms of the $G$-equivariant cohomology of a single derived scheme.
Here, equivariant cohomology is interpreted as the hypercohomology of the equivariant constant $\ell$-adic sheaf, as in~\cite{bdr}.

In the case where $\bbb{G}$ is split and $\bbb{L}, \bbb{M}$ are maximal tori respectively of types $[w], [v]$, the derived scheme takes the form $\cal{Y}_w \times_G^\ur{R} \cal{Y}_v$ where $\cal{Y}_w, \cal{Y}_v$ are defined as follows.
Let $\cal{B}$ be the flag variety of $G$, parametrizing its Borel subgroups.
For any $w \in W_{\Gamma_\bbb{G}}$, write $B \xrightarrow{w} B'$ to indicate that a pair of Borels $(B, B')$ has relative position $w$, and let
\begin{align}
\cal{Y}_w = \{(g, B) \in G \times \cal{B} \mid B \xrightarrow{w} gF(B)g^{-1}\}.
\end{align}
Let $G$ act on $\cal{Y}_w$ according to $x \cdot (g, B) = (xgF(x)^{-1}, xBx^{-1})$.
The arguments of~\cite[\S{2}]{bdr} show that the $G$-equivariant cohomology of $\cal{Y}_w$ recovers $R_L^G(1)$, and hence, that of $\cal{Y}_w \times_G^\ur{R} \cal{Y}_v$ recovers $R_L^G(1) \otimes_{\QL G^F} R_M^G(1)$.
Note that $\cal{Y}_w, \cal{Y}_v$ are analogues, with $GF \subseteq G \rtimes \langle F\rangle$ in place of $G$, of the varieties $Y_w$ appearing in Lusztig's work on character sheaves.
\end{rem}

\section{The General Linear and Unitary Groups}\label{sec:gl-gu}

\subsection{}

Fix an integer $n \geq 2$.
We write $\GGGL_n$ for the \dfemph{generic general linear group} of rank $n$ and $\GGGU_n$ for the \dfemph{generic general unitary group} of rank $n$, corresponding to the finite reductive groups $\ur{GL}_n(\bb{F}_q)$ and $\ur{GU}_n(\bb{F}_q)$.
These generic groups have the same root datum, but $\GGGL_n$ is split with Weyl group $S_n$, whereas $[f]_{\GGGU_n} = S_nf = -S_n$ with $f \in \Aut(\Gamma_{\GGGU_n})$ induced by the nontrivial involution of the Dynkin diagram of type $A_{n - 1}$.

In this section, we first describe the $\Phi$-cuspidal pairs of $\GGGL_n$ and $\GGGU_n$ in terms of the combinatorics of partitions.
We then describe the Brou\'e--Malle specializations $\cal{S}^\bbb{G}_{\bbb{L}, \lambda}$ for these cases, and prove \Cref{mainthm:1} for the resulting Hecke algebras.

\subsection{Partitions and Abaci}\label{subsec:partitions}

Let $\Pi$ be the set of integer partitions of arbitrary size.
We view an element $\pi \in \Pi$ as a weakly-descreasing sequence of nonnegative integers $\pi_1 \geq \pi_2 \geq \ldots$ such that $\pi_i = 0$ whenever $i$ is large enough.
Its \dfemph{size} $|\pi|$ is the sum of its entries, and its \dfemph{length} $\ell_\pi$ is its number of nonzero entries.
We refer to $e$-tuples of partitions as \dfemph{$e$-partitions}.

Recall that a partition can also be drawn as a Young diagram composed of \dfemph{nodes}.
By definition, $\pi$ is an \dfemph{$e$-core} if and only if its diagram contains no nodes whose hook length is divisible by $e$.
In the language of~\cite[\S{2.3}, {2.7}]{jk}, this means we cannot obtain a smaller Young diagram by removing rim $e$-hooks from $\pi$.

We write $\Pi_{\core{e}} \subseteq \Pi$ for the subset of $e$-cores.
For each integer $e > 0$, there is a bijection $\Pi \xrightarrow{\sim} \Pi_{\core{e}} \times \Pi^e$, called the corresponding \dfemph{core-quotient bijection}.
For our purposes, it is most convenient to express it in terms of the combinatorics of objects called abaci.
Let
\begin{align}
\sf{B} = \{\beta \subseteq \bb{Z} \mid \text{$\bb{Z}_{< x} \subseteq \beta \subseteq \bb{Z}_{< y}$ for some integers $x, y$}\}.
\end{align}
Tuples $\vec{\beta} = (\beta^{(0)}, \ldots, \beta^{(e - 1)}) \in \sf{B}^e$ are usually called \dfemph{$e$-abacus configurations}, or more simply, \dfemph{$e$-abaci}.
The elements of the sets $\beta^{(i)}$ are sometimes called \dfemph{beads}.
There is a bijection from $1$-abaci to $e$-abaci
\begin{align}
\upsilon_e = (\upsilon_e^{(0)}, \upsilon_e^{(1)}, \ldots, \upsilon_e^{(e - 1)}) : \sf{B} \xrightarrow{\sim} \sf{B}^e
\end{align}
defined by setting
\begin{align}\label{eqn:upsilon}
\upsilon_e^{(r)}(\beta) = \{q \in \bb{Z} \mid eq + r \in \beta\}
	\quad\text{for $0 \leq r < e$}.
\end{align}
We now explain how $\upsilon_e$ produces $e$-cores and $e$-quotients.

A \dfemph{charged partition} is a pair $(\pi, s) \in \Pi \times \bb{Z}$.
Henceforth, to follow convention, we will use the notation $|\pi, s\rangle$ rather than $(\pi, s)$.
There is a bijection from charged partitions to $1$-abaci 
\begin{align}
\beta : \Pi \times \bb{Z} \xrightarrow{\sim} \sf{B},
\end{align}
defined by
\begin{align}
\beta(|\pi, s\rangle) = \beta_{\pi, s} \vcentcolon= \{\pi_i - i + s \mid i = 1, 2, 3, \ldots\}.
\end{align}
More generally, a \dfemph{charged $e$-partition} is a pair $|\vec{\pi}, \vec{s}\rangle \in \Pi^e \times \bb{Z}^e$.
We again write $\beta$ to denote the bijection from charged $e$-partitions to $e$-abaci: $\beta : \Pi^e \times \bb{Z}^e \xrightarrow{\sim} \sf{B}^e$.
We then get a bijection from charged partitions to charged $e$-partitions:
\begin{align}
\Upsilon_e : \Pi \times \bb{Z} \xrightarrow{\beta} \sf{B} \xrightarrow{\upsilon_e} \sf{B}^e \xrightarrow{\beta^{-1}} \Pi^e \times \bb{Z}^e.
\end{align}
With this notation, a partition $\pi \in \Pi$ is an $e$-core if and only if, for some (\emph{equiv.}, any) $s \in \bb{Z}$, the charged $e$-partition $\Upsilon_e(|\pi, s\rangle)$ takes the form $(\emptyset^e, \vec{t})$ for some $\vec{t} \in \bb{Z}^e$, where $\emptyset^e = (\emptyset, \ldots, \emptyset)$.
That is, no bead in the output $e$-abacus can be pushed to a more negative position.

In general, if $\Upsilon_e(|\pi, s\rangle) = |\vec{\varpi}, \vec{r}\rangle$, then the charged version of the core-quotient bijection sends $|\pi, s\rangle$ to the pair consisting of:
\begin{enumerate}
\item 	The charged $e$-core $(\Upsilon_e)^{-1}(|\emptyset^e, \vec{r}\rangle)$.
		The underlying $e$-core depends only on $\pi$, so we call it the \dfemph{$e$-core of $\pi$}.
		As in~\cite[\S{2.7}]{jk}, it is the partition that remains after we remove as many rim $e$-hooks from $\pi$ as possible.

\item 	The $e$-partition $\vec{\varpi}$, which we call the \dfemph{$e$-quotient of $|\pi, s\rangle$}.

\end{enumerate}
Thus the charged core-quotient bijection is a map
\begin{align}
\Pi \times \bb{Z} \xrightarrow{\sim} (\Pi_{\core{e}} \times \bb{Z}) \times \Pi^e.
\end{align}

\subsection{}\label{subsec:wreath}

In what follows, we fix a $\Phi_1$-split maximal torus $\bbb{A} \leq \GGGL_n$.
A special feature of $\GGGL_n$ is that $\Uch(\GGGL_n) = \Uch(\GGGL_n)_{\bbb{A}, 1}$:
Every generic unipotent irreducible character belongs to the principal series.
Since $W^{\GGGL_n}_{ \bbb{A}} = S_n$, the symmetric group on $n$ letters, we may now regard $\chi_{\bbb{A}, 1}^{\GGGL_n}$ as a bijection:
\begin{align}\label{eq:partition}
 \Uch(\GGGL_n) \xrightarrow{\sim} \{\text{partitions of $n$}\}.
\end{align}
Henceforth, we write $\rho$ to denote both elements of $\Uch(\bbb{G})$ and their corresponding partitions. 
Following~\cite[45--48]{bmm}, we explain how the $\Phi$-Harish-Chandra series for $\GGGL_n$ are described by cores and quotients.

\subsubsection{}

First, a full set of representatives for the elements of $\sf{HC}_e(\GGGL_n)$ are the $\Phi_e$-cuspidal pairs $(\bbb{L}, \lambda)$ in which:
\begin{enumerate}
\item 	The Levi subgroup $\bbb{L}$ takes the form 
		\begin{align}
		\bbb{L} = \bbb{T} \times \GGGL_{n - ae}
			\quad\text{for some integer $a > 0$},
		\end{align}
		where $\bbb{T} \leq \GGGL_{ae}$ is a subtorus such that $|\bbb{T}|(x) = (x^e - 1)^a$.
		Note that in this case, $\bbb{L} = Z_\bbb{G}(\bbb{T}')$ for a $\Phi_e$-subtorus $\bbb{T}' \leq \bbb{T}$.
		
\item 	The character $\lambda \in \Uch(\bbb{L})$ corresponds to an $e$-core partition of $n - ae$ under the identification $\Uch(\bbb{L}) = \Uch(\GGGL_{n - ae})$ induced by (1).  Henceforth, we also write $\lambda$ for this $e$-core partition.

\end{enumerate}
In general, not every $\Phi_e$-split Levi subgroup of $\GGGL_n$ is $S_n$-conjugate to a Levi of the form in (1).
But for $\bbb{L}$ of that form, the $\Phi_e$-cuspidal elements of $\Uch(\bbb{L})$ are precisely those in (2).

Suppose that $(\bbb{L}, \lambda)$ is a $\Phi_e$-cuspidal pair of the form in (1)--(2).
It turns out that $[f]_\bbb{L} = S_{n - ae} v$ for some $v \in S_{ae}$ of cycle type $e^a$, where we embed $S_{ae} \times S_{n - ae}$ into $S_n$ as a parabolic subgroup.
Moreover,
\begin{align}
W^\bbb{G}_{\bbb{L}, \lambda} \simeq Z_{S_{ae}}(v) \simeq \bb{Z}_e \wr S_a,
\end{align}
where $\bb{Z}_e \vcentcolon= \bb{Z}/e\bb{Z}$.
In Shephard--Todd notation, this wreath product is denoted $G(e, 1, a)$.
Explicitly, if $c_1, \ldots, c_a$ are the individual $e$-cycles that comprise $v$, then $c_i$ generates the $i$th copy of $\bb{Z}_e$ in the wreath product.
Note that the isomorphisms above depend only on $\bbb{L}$, not on $\lambda$.

By Clifford theory, the irreducible characters of $\bb{Z}_e \wr S_a$ are indexed by the $e$-partitions $\vec{\pi} = (\pi^{(0)}, \pi^{(1)}, \ldots, \pi^{(e - 1)})$ with $\sum_i |\pi^{(i)}| = a$.
Henceforth, we conflate each element of $\Irr(W^{\GGGL_n}_{\bbb{L}, \lambda})$ with the corresponding $e$-partition.

\subsubsection{}

The Harish-Chandra series indexed by $(\bbb{L}, \lambda)$ is
\begin{align}
\Uch(\GGGL_n)_{\bbb{L}, \lambda}
	= \{\rho \in \Uch(\GGGL_n) \mid \text{$\rho$ has $e$-core $\lambda$}\}.
\end{align}
In this sense, the map sending $\rho \in \Uch(\GGGL_n)$ to the $\Phi_e$-Harish--Chandra series containing $\rho$ is essentially the map sending a partition of $n$ to its $e$-core.
At the same time, the map
\begin{align}
\chi_{\bbb{L}, \lambda}^{\GGGL_n} : \Uch(\GGGL_n)_{\bbb{L}, \lambda} \to \Irr(W^{\GGGL_n}_{\bbb{L}, \lambda})
\end{align}
is essentially the map sending a partition of $n$ to its $e$-quotient.
To make this more precise:
For all $\pi \in \Pi$ and $s \in \bb{Z}$, let $\vec{\varrho}_{e, s}(\pi)$ be the $\Pi^e$-component of $\Upsilon_e(|\pi, s\rangle)$.
Then the map $\chi_{\bbb{L}, \lambda}^{\GGGL_n}$ of~\cite[46--47]{bmm} is determined by the commutative diagram below, in which the vertical arrows are injective:
\begin{equation}\label{eq:uch-irr-partitions}
\begin{tikzpicture}[baseline=(current bounding box.center), >=stealth]
\matrix(m)[matrix of math nodes, row sep=2.5em, column sep=3.5em, text height=2ex, text depth=0.5ex]
{ 		
	\Uch(\GGGL_n)_{\bbb{L}, \lambda} 
		&\Irr(W^{\GGGL_n}_{\bbb{L}, \lambda})\\
	\Pi
		&\Pi^e\\
};
\path[->, font=\scriptsize, auto]
(m-1-1)	
	edge node{$\chi_{\bbb{L}, \lambda}^{\GGGL_n}$} (m-1-2)
	edge (m-2-1)
(m-1-2)
	edge (m-2-2)
(m-2-1)
	edge node{$\vec{\varrho}_{e, \ell_\lambda}$} (m-2-2);
\end{tikzpicture}
\end{equation}
In \emph{loc.\ cit.}, Brou\'e--Malle--Michel verify that under this definition, the maps $\chi_{\bbb{L}, \lambda}^{\GGGL_n}$ satisfy the commutativity constraint in \Cref{thm:bmm}(2a).

\subsubsection{}\label{subsubsec:ariki-koike}

In what follows, we write $C = \bb{Z}_e \wr S_a$.
For such groups, the Hecke algebra $H_C(\vec{u})$ is also known as an \dfemph{Ariki--Koike algebra}.
To describe it, recall the Coxeter presentation:
\begin{align}
C = \left\langle s_0, s_1, \ldots, s_{a - 1} \,\middle|
	\begin{array}{l}
	s_0^e = s_1^2 = \cdots = s_{a - 1}^2 = 1,\\
	(s_0s_1)^4 = (s_1s_2)^3 = \cdots = (s_{a - 2}s_{a - 1})^3 = 1
	\end{array}
	\right\rangle.
\end{align}
In the notation of \S\ref{subsec:bmr}, the $C$-orbits on the set of hyperplanes $\cal{A}$ correspond to the sets of pseudo-reflections $\{s_0\}$ and $\{s_1, \ldots, s_{a - 1}\}$.
Fix distinguished braid reflections $\tau, \sigma_1, \ldots, \sigma_{a - 1}$ that respectively lift $s_0, s_1, \ldots, s_{a - 1}$ to generators of the monodromy around these hyperplanes, such that the braid relations  in~\cite[\S{2.1}]{lm} hold with our $a, \tau, \sigma_i$ in place of their $n, T_0, T_i$.
We will write $\tau, \sigma_i$ in place of $\sigma_H$, and similarly, $u_{\tau, j}, u_{\sigma, j}$ in place of $u_{\cal{C}, j}$.
With this notation,
\begin{align}
H_C(\vec{u}) = \frac{\bb{Z}[\vec{u}^{\pm 1}][\Br_C]}{\left\langle\!\begin{array}{l}
		(\tau - u_{\tau, 0}) \cdots (\tau - u_{\tau, e - 1}),\\
		\text{$(\sigma_i - u_{\sigma, 0})(\sigma_i - u_{\sigma, 1})$ for $1 \leq i < a$}
	\end{array}\!\right\rangle}.
\end{align} 

\subsection{The General Linear Hecke Algebra}\label{subsec:gl}

When $C = W^{\GGGL_n}_{\bbb{L}, \lambda}$ for some $\Phi_e$-cuspidal pair $(\bbb{L}, \lambda)$ of $\GGGL_n$, the specialization $\cal{S}^{\GGGL_n}_{\bbb{L}, \lambda}$ in \S\ref{subsec:bm_93} is the specialization of $H_C(\vec{u})$ defined as follows.
First, let $\vec{a}_e : \Pi \to \bb{Z}^e$ be defined by
\begin{align}\label{eq:a}
\vec{a}_e(\lambda) = e\vec{b}_e(\lambda) + (0, 1, \ldots, e - 1), 
\end{align}
where $\vec{b}_e(\lambda)$ is the $\bb{Z}^e$-component of $\Upsilon_e(|\lambda, \ell_\lambda \rangle)$.
Note that $\vec{a}_e(\emptyset) = (0, 1, \ldots, e - 1)$, and that $a_e^{(0)}(\lambda) = 0$ for all $\lambda$.
In~\cite[\S{2.9--2.10}]{bm_93}, Brou\'e--Malle set
\begin{align}\label{eq:hecke-gl}
\left\{\begin{array}{r@{\:}ll}
\cal{S}^{\GGGL_n}_{\bbb{L}, \lambda}(u_{\tau, j}) 
	&= x^{a_e^{(j)}(\lambda)}
		&\text{for all $j$},\\[1ex]
\cal{S}^{\GGGL_n}_{\bbb{L}, \lambda}(u_{\sigma, 0})
	&= 1,\\[1ex]
\cal{S}^{\GGGL_n}_{\bbb{L}, \lambda}(u_{\sigma, 1})
	&= -x^e.
	\end{array}\right.
\end{align}
The definition of $\vec{a}_e$ ensures that this is, in fact, a $\Phi_e$-specialization.


\begin{prop}[Digne--Michel, Dudas]\label[prop]{prop:gl}
\Cref{conj:end} holds when $\bbb{G} = \GGGL_n$ and either of the following is true:
\begin{enumerate}
\item 	$\bbb{L}$ is a maximal torus of $\GGGL_n$.

\item 	$\bbb{L} = \bbb{T} \times \GGGL_{n - e}$ for some Coxeter maximal torus $\bbb{T} \leq \GGGL_e$.
		Equivalently, $W^{\GGGL_n}_{\bbb{L}, \lambda}$ is cyclic.

\end{enumerate}
In the notation of \S\ref{subsec:wreath}, (1) corresponds to $n - ae \in \{0, 1\}$, and (2) to $a = 1$.
\end{prop}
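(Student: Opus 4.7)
The plan is to match, for each of the two cases, an explicit Hecke specialization already known in the literature with the abacus-theoretic formula \eqref{eq:hecke-gl}. In both cases the existence of $\cal{S}_{\GGGL_n, \bbb{L}, \lambda}$ is already guaranteed by a prior result; the content of the proposition is that the specialization takes the explicit shape \eqref{eq:hecke-gl}.

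For case (1), I would appeal to \Cref{thm:ldmr}(4), which provides \Cref{conj:end} for every maximal torus of a split group of type $A$, together with an explicit $\cal{S}_{\GGGL_n, \bbb{T}, 1}$ expressed via eigenvalues of Frobenius on $\ur{H}_c^*(X_{T \subseteq B}^G)$ as in \cite[Thm.\ 10.1]{dm_06}. Here $\lambda = 1$ corresponds under $\Uch(\bbb{L}) = \Uch(\GGGL_{n - ae})$ either to the empty partition (when $n = ae$) or to the partition $(1)$ (when $n = ae + 1$). I would then compute $\vec{a}_e(\lambda)$ directly from the abacus recipe of \Cref{subsec:partitions}: for $\lambda = \emptyset$, the beta-set $\beta_{\emptyset, e} = \bb{Z}_{< e}$ splits under $\upsilon_e$ into $e$ copies of $\bb{Z}_{< 1}$, so $\vec{b}_{e, 0}(\emptyset) = (1, \ldots, 1)$ and $\vec{a}_e(\emptyset) = (e, e + 1, \ldots, 2e - 1)$; the case $\lambda = (1)$ perturbs exactly one runner. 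It then remains to compare these exponents term by term with the data recorded in \cite{dm_06}.

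For case (2), the hypothesis $a = 1$ forces $W_{\GGGL_n, \bbb{L}, \lambda} \simeq \bb{Z}_e$ and removes all $\sigma$-generators from the picture of \Cref{subsec:ariki-koike}, leaving only the prescription $u_{\tau, j} \mapsto x^{a_e^{(j)}(\lambda)}$ for $0 \leq j < e$. Brou\'e and Malle predicted this specialization in \cite[\S{2.10}]{bm_93}, expressing the Frobenius eigenvalues in terms of the contents of the $e$-hooks one adds to $\lambda$ to obtain the partitions in $\Uch(\GGGL_n, \bbb{L}, \lambda)$; Dudas \cite{dudas} verified this prediction by direct computation of the eigenvalues on $\ur{H}_c^*(Y_{L \subseteq P}^G)[\lambda_q]$. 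My remaining task is to translate the content-based Brou\'e--Malle formula into abacus language: the $j$-th parameter should be $x$ raised to $e$ times the charge of the $j$-th runner of $\beta_{\lambda, e + \ell_\lambda}$, shifted by the residue $j$, which is precisely \eqref{eq:a}.

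The main obstacle is precisely this combinatorial dictionary in case (2): verifying that the addition of an $e$-hook with foot-content $c$ to an $e$-core $\lambda$ shifts a bead on the appropriate runner by one, so that the Brou\'e--Malle content formula matches the abacus positions $a_e^{(j)}(\lambda)$ termwise. This identification is a standard feature of $e$-abacus combinatorics, but it must be pinned down to the normalizations of \Cref{subsec:partitions} (in particular, the starting charge $e + \ell_\lambda$). Once that is in hand, case (1) becomes a specialization of the same dictionary at the degenerate cores $\emptyset$ and $(1)$, to be checked against the Digne--Michel data from \cite[Thm.\ 10.1]{dm_06}.
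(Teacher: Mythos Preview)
Your approach matches the paper's exactly: case (1) is \Cref{thm:ldmr}(4) plus the abacus computation of $\vec{a}_e$ for the trivial core to be compared with \cite[Thm.\ 10.1]{dm_06}, and case (2) is Dudas \cite{dudas} plus the identification of his Frobenius-eigenvalue multiset with $\{q^{a_e^{(i)}(\lambda)}\}_i$, using $\tau \mapsto F$. The paper does not spell out the content/abacus dictionary you describe---it simply asserts that Dudas's $\{\gamma_d(X, x)\}_{x \in X'}$ coincides with $\{q^{a_e^{(i)}(\lambda)}\}_i$ after matching his $(d,\mu)$ with our $(e,\lambda)$---so your more explicit translation is a reasonable expansion of the same argument.
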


\begin{proof}
Case (1) is precisely case (4) of \Cref{thm:ldmr}.
Here, $\lambda$ is the trivial character, which corresponds to the empty partition, so $\vec{a}_e(\lambda) = (0, 1, \ldots, e - 1)$.
Therefore, \eqref{eq:hecke-gl} recovers the specialization in~\cite[Thm.\ 10.1]{dm_06}.

In case (2), the group $\Br_C$ is freely generated by $\tau$.
So it suffices to construct, uniformly for any prime power $q > 1$ and prime $\ell$ invertible in $\bb{F}_q$, a $\ur{GL}_n(\bb{F}_q)$-equivariant action of $\tau$ on $\ur{H}_c^\ast(Y_{L \subseteq P}^G)[\lambda_q]$ with eigenvalues $q^{a_e^{(0)}(\lambda)}, \ldots, q^{a_e^{(e - 1)}(\lambda)}$, where $L, \lambda_q$ arise from $q, L, \lambda$.
As it turns out, we can map $\tau$ to the Frobenius $F$.
Indeed, the work of Dudas in~\cite{dudas} shows that for the cuspidal pairs we are considering, the multiset of eigenvalues of $F$ is precisely $\{q^{a_e^{(i)}(\lambda)}\}_i$.
To translate his notation into ours, observe that his $d$ is our $e$, his $\mu$ is our $\lambda$, and his multiset $\{\gamma_d(X, x)\}_{x \in X'}$ is our multiset $\{q^{a_e^{(i)}(\lambda)}\}_i$.
\end{proof}

\begin{rem}
The strategy of the proof of~\cite[Thm.\ 10.1]{dm_06} is to reduce from the general case to that of Coxeter maximal tori, via a subtle interplay between the positive braids and Frobenius maps that define the relevant (braid-theoretic) Deligne--Lusztig varieties.
It is plausible that this strategy could be generalized to the \dfemph{parabolic Deligne--Lusztig varieties} of~\cite{dm_14}.
One might then reduce the case of general $\bbb{L} \leq \GGGL_n$ to case (2) of \Cref{prop:gl}.
\end{rem}

\subsection{Ennola Duality}\label{subsec:ennola}

For any generic group $\bbb{G}$, write $\bbb{G}^-$ to denote the generic group defined by $\Gamma_{\bbb{G}^-} = \Gamma_\bbb{G}$ and $[f]_{\bbb{G}^-} = [-f]_\bbb{G}$.
Note that the map $\bbb{L} \mapsto \bbb{L}^-$ sends each $\Phi_e$-split Levi subgroup of $\bbb{G}$ to a $\Phi_{e^-}$-split Levi subgroup of $\bbb{G}^-$, where 
		\begin{align}
		\Phi_{e^-}(x) \vcentcolon= \pm\Phi_e(-x),
			\quad\text{meaning $e^-$}
				&= \left\{\begin{array}{ll}
					2e
						&\text{$e$ odd},\\
					e
						&e \equiv 0 \pmod{4},\\
					\frac{1}{2}e
						&e \equiv 2 \pmod{4}.
				\end{array}\right.
		\end{align}

We will refer to the theorem below as the \dfemph{Ennola duality} between $\bbb{G}$ and $\bbb{G}^-$.
Items (1)--(2) comprise Theorem 3.3 of~\cite{bmm}.

\begin{thm}[Brou\'e--Malle--Michel]\label[thm]{thm:ennola}
For any generic group $\bbb{G}$, there is a map
\begin{align}
\rho \mapsto (\varepsilon(\rho), \rho^-) : \Uch(\bbb{G}) \to \{\pm 1\} \times \Uch(\bbb{G}^-)
\end{align}
such that:
\begin{enumerate}
\item 	$\rho \mapsto \rho^-$ is bijective.
\item 	For any integer $e > 0$ and $\Phi_e$-split Levi $\bbb{L} \leq \bbb{G}$, we have a commutative diagram:
		\begin{equation}
\begin{tikzpicture}[baseline=(current bounding box.center), >=stealth]
\matrix(m)[matrix of math nodes, row sep=3em, column sep=5.5em, text height=2ex, text depth=0.5ex]
{ 		
	\bb{Z}\Uch(\bbb{L})
		&\bb{Z}\Uch(\bbb{L}^-)\\	
	\bb{Z}\Uch(\bbb{G})
		&\bb{Z}\Uch(\bbb{G}^-)\\	
};
\path[->, font=\scriptsize, auto]
(m-1-1)		edge node{$\lambda \mapsto \varepsilon(\lambda)\lambda^-$} (m-1-2)
(m-1-1)		edge node[left]{$R_\bbb{L}^\bbb{G}$} (m-2-1)
(m-1-2)		edge node{$R_{\bbb{L}^-}^{\bbb{G}^-}$} (m-2-2)
(m-2-1)		edge node{$\rho \mapsto \varepsilon(\rho)\rho^-$} (m-2-2);
\end{tikzpicture}
\end{equation}

\item 	$\rho \mapsto \rho^-$ transports the partition of $\Uch(\bbb{G})$ into $\Phi_e$-Harish-Chandra series onto the partition of $\Uch(\bbb{G}^-)$ into $\Phi_{e^-}$-Harish-Chandra series.

\item 	For any $\Phi_e$-cuspidal pair $(\bbb{L}, \lambda)$, we have $W^{\bbb{G}^-}_{\bbb{L}^-, \lambda^-} \simeq W^\bbb{G}_{\bbb{L}, \lambda}$.
\end{enumerate}
\end{thm}

\begin{proof}[Proof of (3)--(4)]
We see that (3) follows from (2).
To prove (4):
We always have $W^{\bbb{G}^-}_{\bbb{L}^-} \simeq W^\bbb{G}_\bbb{L}$, so by~\Cref{rem:lambda-indep}, we reduce to the cases where $\bbb{G}$ is of untwisted type $D_n$ for $n$ even or of type $E_7$.
In the latter case, use~\cite[Table 1]{bmm}.
In the former case, if $\lambda$ is indexed by a degenerate symbol, then $W^\bbb{G}_{\bbb{L}, \lambda} \subseteq W^\bbb{G}_\bbb{L}$ is the unique subgroup of index two, and similarly with $\bbb{G}^-, \bbb{L}^-, \lambda^-$ in place of $\bbb{G}, \bbb{L}, \lambda$.
\end{proof}



\subsection{The General Unitary Hecke Algebra}\label{subsec:gu}

In~\cite[\S{2.11}]{bm_93}, Brou\'e--Malle predict that $H^{\GGGU_n}_{\bbb{L}^-, \lambda^-}(x)$ is related to $H^{\GGGL_n}_{\bbb{L}, \lambda}(x)$ by the substitution $x \mapsto -x$.
That is, in the notation of \S\ref{subsec:gl}, $\cal{S}^{\GGGU_n}_{\bbb{L}^-, \lambda^-}$ is defined by:
\begin{align}\label{eq:hecke-gu}
\left\{\begin{array}{r@{\:}ll}
\cal{S}^{\GGGU_n}_{\bbb{L}^-, \lambda^-}(u_{\tau, j}) 
	&= (-x)^{a_e^{(j)}(\lambda)}
		&\text{for all $j$},\\[1ex]
\cal{S}^{\GGGU_n}_{\bbb{L}^-, \lambda^-}(u_{\sigma, 0})
	&= 1,\\[1ex]
\cal{S}^{\GGGU_n}_{\bbb{L}^-, \lambda^-}(u_{\sigma, 1})
	&= -(-x)^e.
	\end{array}\right.
\end{align}

\begin{prop}[Brou\'e--Malle--Michel]\label[prop]{prop:gu}
Suppose that
\begin{align}
e^- \in \{1, 2, 2n_\odd\},
	\quad\text{where $n_\odd \vcentcolon= \left\{\begin{array}{ll}
	n - 1
		&\text{$n$ even},\\
	n 
		&\text{$n$ odd}.
\end{array}\right.$}
\end{align}
Then \Cref{conj:end} holds when $\bbb{G} = \GGGU_n$ and we replace $e, \bbb{L}, \lambda$ with $e^-, \bbb{L}^-, \lambda^-$ in the statement.
\end{prop}

\begin{proof}
If $e^- = 1$, then $\bbb{L}^- \leq \GGGU_n$ belongs to case (1) of \Cref{thm:ldmr}.
It remains to handle $e^- = 2$ and $e^- = 2n_\odd$.

If $n = 2$, then $2n_\odd = 2$.
If instead $n > 2$, then $2n_\odd$ is the twisted Coxeter number of $\GGGL_n$ by~\cite[\S{7}]{springer_74}.
Therefore, by~\cite[Cor.\ 5.10]{bmm_99}, $e^- = 2n_\odd$ implies that $\bbb{L}^-$ belongs to case (3) of \Cref{thm:ldmr}.
Similarly, $2$ is the order of $w_0f = -1$, where $w_0 \in S_n$ is the longest element and $f$ is the Dynkin-diagram automorphism defining $\GGGU_n$.
Therefore, $e^- = 2$ implies that $\bbb{L}^-$ belongs to case (2) of \Cref{thm:ldmr}.
\end{proof}


\subsection{}\label{subsec:lm}

To prove \Cref{mainthm:1}, we need the work of Lyle--Mathas describing the blocks of specialized Ariki--Koike algebras.

Recall that for any partition $\pi$ and node $\square$ in its diagram, the \dfemph{content} of $\square$ is the integer $\sf{c}(\square) = c - r$ when $\square$ occupies the $c$th column and $r$th row, in either English or French conventions.
For instance, the contents in the diagram of the trivial partition of $n$ are $0, 1, \ldots, n - 1$.
What follows is a version of~\cite[Thm.\ 2.11]{lm}.

\begin{thm}[Lyle--Mathas]\label[thm]{thm:lm}
Let $C = \bb{Z}_e \wr S_a$.
Fix a field $\bb{K} \supseteq \bb{Q}$ and units $\alpha_0, \alpha_1, \ldots, \alpha_{e - 1}, \omega \in \bb{K}^\times$, where $\omega \neq 1$.
Let $\cal{T} : \bb{Z}[\vec{u}^{\pm 1}] \to \bb{K}$ be the specialization
\begin{align}
\left\{\begin{array}{r@{\:}ll}
\cal{T}(u_{\tau, j}) 
	&= \alpha_j
		&\text{for all $j$},\\
\cal{T}(u_{\sigma, 0})
	&= 1,\\
\cal{T}(u_{\sigma, 1})
	&= -\omega,
	\end{array}\right.
\end{align}
and let $\bb{K}H_{C, \cal{T}} = \bb{K}  \otimes_{\bb{Z}[\vec{u}^{\pm 1}]} H_C(\vec{u})$ be the base change of $H_C(\vec{u})$ along $\cal{T}$.
For any $e$-partition $\vec{\varpi}$ with $\sum_i |\varpi_i| = a$, let $\sf{c}_{\vec{\varpi}}^\cal{T} : \bb{K}^\times \to \bb{Z}_{\geq 0}$ be defined by
\begin{align}
\sf{c}_{\vec{\varpi}}^\cal{T}(u) = \left|\left\{
		(j, \square) \,\middle|
		\begin{array}{l}
		0 \leq j \leq e - 1,\,
		\square \in \varpi^{(j)},\\
		u = \omega^{\sf{c}(\square)} \alpha_j
		\end{array}
	\right\}\right|.
\end{align}
Then two elements of $\Irr(C)$ map into the same block of $\sf{K}_0(\bb{K}H_{C, \cal{T}})$ if and only if they correspond to $e$-partitions $\vec{\varpi}, \vec{\varrho}$ such that $\sf{c}_{\vec{\varpi}}^\cal{T} = \sf{c}_{\vec{\varrho}}^\cal{T}$.
\end{thm}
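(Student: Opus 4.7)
The plan is to prove Theorem~\ref{thm:lm} by producing Jucys--Murphy elements for the specialized Ariki--Koike algebra $KH_{C, \cal{T}}$, computing their action on Specht modules, and then extracting the block decomposition from their central symmetric functions. This is essentially the approach taken by Lyle--Mathas, building on earlier work of Dipper--James--Mathas and Ariki.

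First I would introduce the Jucys--Murphy elements $L_1, \ldots, L_a \in KH_{C, \cal{T}}$ defined inductively by $L_1 = \tau$ and $L_{k+1} = \sigma_k L_k \sigma_k$, where $\sigma_k$ is the lift of the simple transposition $s_k \in S_a$. A standard calculation, due in this level of generality to Ariki--Koike and Mathas, shows that the symmetric functions of $L_1, \ldots, L_a$ generate a subalgebra of the center $Z(KH_{C, \cal{T}})$. The next step, which is the core computation, is to realize the Dipper--James--Mathas cellular basis of $KH_{C, \cal{T}}$ indexed by pairs of standard $\vec{\varpi}$-tableaux, and to prove that on the Specht module $S^{\vec{\varpi}}$ the operator $L_k$ acts triangularly with respect to the tableau basis, with diagonal entry on a tableau $\mathsf{t}$ equal to $\omega^{y(\square_k)-x(\square_k)}\alpha_{j_k}$, where $\square_k$ is the box of $\mathsf{t}$ containing $k$ inside the $j_k$-th component. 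Thus the multiset of diagonal eigenvalues of $(L_1, \ldots, L_a)$ on $S^{\vec{\varpi}}$ is exactly the data recorded by $\mathsf{c}^\cal{T}_{\vec{\varpi}}$.

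From here, the ``only if'' direction drops out. Every element $p(L_1, \ldots, L_a)$ symmetric in the $L_k$'s is central, and its scalar action on the simple head of $S^{\vec{\varpi}}$ is determined by the multiset of eigenvalues, hence by $\mathsf{c}^\cal{T}_{\vec{\varpi}}$. Therefore two simple modules with distinct residue multisets have distinct central characters and must lie in distinct blocks.

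The main obstacle is the converse: showing that if $\mathsf{c}^\cal{T}_{\vec{\varpi}} = \mathsf{c}^\cal{T}_{\vec{\varrho}}$, then the simple heads of $S^{\vec{\varpi}}$ and $S^{\vec{\varrho}}$ actually belong to the same block. The plan is to reduce this to the case where $\vec{\varrho}$ is obtained from $\vec{\varpi}$ by a single \emph{Scopes-like move} or a basic rim-hook exchange preserving residues, and then to exhibit a non-split composition factor relation between the corresponding Specht modules. I would follow Lyle--Mathas in using the Jantzen-type sum formula for Ariki--Koike algebras to produce, for any removable/addable box pair of matching residue, a nonzero homomorphism or extension between the relevant Specht modules. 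An induction on the total number of ``moves'' needed to pass from $\vec{\varpi}$ to $\vec{\varrho}$ then links them through a chain of Specht modules all lying in a common block. The combinatorial heart of this step is the observation that the equivalence relation generated by residue-preserving elementary moves on $e$-multipartitions has $\mathsf{c}^\cal{T}_{(-)}$ as a complete invariant, which can be proved by a direct case analysis on abacus configurations as in \Cref{subsec:partitions}. Combining the two directions yields the theorem.
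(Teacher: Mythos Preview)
The paper does not give its own proof of this theorem: it is stated as a version of \cite[Thm.\ 2.11]{lm} and invoked as a black box. So there is no proof in the paper to compare against; the relevant comparison is with Lyle--Mathas's original argument.

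Your sketch is broadly faithful to that argument. The Jucys--Murphy elements, their triangular action on the cellular basis with the residue eigenvalues you describe, and the deduction of the ``only if'' direction via central characters are all correct and standard. For the ``if'' direction, your plan is morally right but slightly mislocates the key step: Lyle--Mathas do not use a Jantzen sum formula or produce extensions between Specht modules directly. Instead they show (their Proposition 2.9, building on Grojnowski's and Ariki's classification of simples) that the symmetric functions in the $L_k$ generate the \emph{entire} centre, not just a subalgebra, so that equality of residue multisets already forces equality of central characters and hence membership in the same block. The combinatorial linking argument you describe is closer in spirit to the older approaches for type $A$ Hecke algebras (Dipper--James, Scopes), and would work, but it is more laborious than what is actually needed once one knows the centre is generated by symmetric functions in the Jucys--Murphy elements.
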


Above, the map $\Irr(C) \to \sf{K}_0(\bb{K}H_{C, \cal{T}})$ is defined by the following replacement for the construction in \Cref{sec:blocks}.
Note that by the work of Dipper--James--Mathas, $\bb{K}H_{C, \cal{T}}$ is a cellular algebra in the sense of Graham--Lehrer~\cite[857]{lm}.
Thus there is a map $\Irr(C) \to \sf{K}_0(\bb{K}H_{C, \cal{T}})$ that sends each irreducible character of $C$ to a corresponding cell module of $\bb{K}H_{C, \cal{T}}$, called its \dfemph{Specht module}.
When $\bb{K} = \bb{Q}_\cyc$, and $\cal{T}$ is the specialization we  consider in the corollary to follow, this construction will agree with that of \Cref{sec:blocks}.

For any charged $e$-partition $|\vec{\pi}, \vec{s}\rangle$, written out as $\vec{\pi} = (\pi^{(0)}, \ldots, \pi^{(e - 1)})$ and $\vec{s} = (s^{(0)}, \ldots, s^{(e - 1)})$, let $\sf{c}_{|\vec{\pi}, \vec{s}\rangle} : \bb{Z} \to \bb{Z}_{\geq 0}$ be defined by
\begin{align}
\sf{c}_{|\vec{\pi}, \vec{s}\rangle}(k)
	= \left|\left\{
		(j, \square) \,\middle|
		\begin{array}{l}
		0 \leq j \leq e - 1,\,
		\square \in \pi^{(j)},\\
		k = e(\sf{c}(\square) + s^{(j)}) + j
		\end{array}
	\right\}\right|.
\end{align}
That is, $\sf{c}_{|\vec{\pi}, \vec{s}\rangle}$ describes the multiset of values $e(\sf{c}(\square) + s^{(j)}) + j$ as we run over indices $j$ and nodes $\square \in \pi^{(j)}$.
Let $\sf{c}_{|\vec{\pi}, \vec{s}\rangle}^{(m)} : \bb{Z}_m \to \bb{Z}_{\geq 0}$ be defined by
\begin{align}\label{eq:mod-m}
\sf{c}_{|\vec{\pi}, \vec{s}\rangle}^{(m)}(\bar{k})
	= \sum_{\bar{k} = k + m\bb{Z}} \sf{c}_{|\vec{\pi}, \vec{s}\rangle}(k).
\end{align}
Note that $\sf{c}_{|\vec{\pi}, \vec{s}\rangle}(k) = 0$ for all $k$ positive enough or negative enough, so $\sf{c}_{|\vec{\pi}, \vec{s}\rangle}^{(m)}$ is well-defined.

\begin{cor}\label[cor]{cor:content}
Let $(\bbb{L}, \lambda)$ be a $\Phi_e$-cuspidal pair for $\GGGL_n$.
Then in the terminology of \Cref{sec:blocks}, two elements of $\Irr(W^{\GGGL_n}_{\bbb{L}, \lambda})$ belong to the same $H^{\GGGL_n}_{\bbb{L}, \lambda}(\zeta_m)$-block if and only if their images under
\begin{align}
\Irr(W^{\GGGL_n}_{\bbb{L}, \lambda}) \xrightarrow{(\chi^{\GGGL_n}_{\bbb{L}, \lambda})^{-1}} \Uch(\GGGL_n)_{\bbb{L}, \lambda} \subseteq \Uch(\GGGL_n) \xrightarrow{\sim} \Pi
\end{align}
are partitions $\pi, \rho \vdash n$ such that $\sf{c}_{\Upsilon_e(|\pi,  \ell_\lambda\rangle)}^{(m)} = \sf{c}_{\Upsilon_e(|\rho,  \ell_\lambda\rangle)}^{(m)}$.
The analogous result holds with $\GGGU_n$ in place of $\GGGL_n$.
\end{cor}

\begin{proof}
By the core-quotient bijection, $\vec{b}_e(\lambda)$ is also the $\bb{Z}^e$-component of both $\Upsilon_e(|\pi, \ell_\lambda\rangle)$ and $\Upsilon_e(|\rho, \ell_\lambda\rangle)$.
Apply \Cref{thm:lm}, then the commutativity of \eqref{eq:uch-irr-partitions} and the definition of $\vec{a}_e$.
This proves the statement for $\GGGL_n$.
The minus signs by which \eqref{eq:hecke-gu} differs from \eqref{eq:hecke-gl} do not affect the proof, giving it for $\GGGU_n$.
\end{proof}

\subsection{\texorpdfstring{Proof of Theorem \ref{mainthm:1}}{Proof of Theorem 3}}

The proof will amount to manipulating generating functions.
As preparation:
For any function $\sf{f} : \bb{Z} \to \bb{Z}$ such that $\sf{f}(k) = 0$ for $k$ positive enough, let $\sf{Z}(t, \sf{f}) \in \bb{Z}[\![t^{-1}]\!][t]$ be defined by
\begin{align}
\sf{Z}(t, \sf{f}) = \sum_{k \in \bb{Z}} f(k) t^k.
\end{align}
For any bounded-above subset $\beta \subseteq \bb{Z}$, let $\sf{Z}(t, \beta) \in \bb{Z}[\![t^{-1}]\!][t]$ be defined by
\begin{align}
\sf{Z}(t, \beta) = \sf{Z}(t, \sf{1}_\beta),
\end{align}
where $\sf{1}_\beta$ is the indicator function on $\beta$.

\begin{lem}\label[lem]{lem:content}
For any charged partition $|\pi, s\rangle$, we have
\begin{align}
(1 - t^{-1})\, \sf{Z}(t, \sf{c}_{|\pi, s\rangle}) = \sf{Z}(t, \beta_{\pi, s}) - \sf{Z}(t, \bb{Z}_{< s}).
\end{align}
More generally, if $e > 0$ is an integer and $\lambda$ is the $e$-core of $\pi$, then
\begin{align}
(1 - t^{-e})\, \sf{Z}(t, \sf{c}_{\Upsilon_e(|\pi, s\rangle)}) = \sf{Z}(t, \beta_{\pi, s}) - \sf{Z}(t, \beta_{\lambda, s}).
\end{align}
\end{lem}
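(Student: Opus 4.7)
The plan is to establish the $e = 1$ case by a direct telescoping, then bootstrap to general $e$ using the abacus bijection $\upsilon_e$.

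For $e = 1$, note that $\beta_{\pi, s}$ differs from the ground-state abacus $\bb{Z}_{< s}$ only in that, for each $i \geq 1$ with $\pi_i > 0$, the bead at position $s - i$ has been replaced by one at $\pi_i - i + s$. Therefore
\begin{align*}
\sf{Z}(t, \beta_{\pi, s}) - \sf{Z}(t, \bb{Z}_{< s})
	= \sum_{i : \pi_i > 0} (t^{\pi_i - i + s} - t^{s - i})
	= \sum_i t^{s - i}\,(t^{\pi_i} - 1).
\end{align*}
Then I would factor $t^{\pi_i} - 1 = (t - 1)\sum_{c = 1}^{\pi_i} t^{c-1}$ and rewrite the resulting double sum over pairs $(i, c)$ with $1 \leq c \leq \pi_i$ as a sum over boxes of $\pi$. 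Comparison with the definition of $\sf{c}_{|\pi, s\rangle}$ then yields exactly $(1 - t^{-1})\,\sf{Z}(t, \sf{c}_{|\pi, s\rangle})$.

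For the general case, I would exploit that the decomposition $k = e\,q_e(k) + r_e(k)$ gives $\sf{Z}(t, \beta) = \sum_{i=0}^{e-1} t^i\,\sf{Z}\bigl(t^e, \upsilon_e^{(i)}(\beta)\bigr)$ for every bounded-above $\beta \subseteq \bb{Z}$. Writing $\Upsilon_e(|\pi, s\rangle) = |\vec{\varpi}, \vec{r}\rangle$ so that $\upsilon_e^{(i)}(\beta_{\pi, s}) = \beta_{\varpi^{(i)}, r^{(i)}}$, the next step is to check that $\Upsilon_e(|\lambda, s\rangle) = |\emptyset^e, \vec{r}\rangle$ with the \emph{same} $\vec{r}$: removing a rim $e$-hook from $\pi$ corresponds to pushing one bead back by one position in one component of the abacus, so iterating empties every component without altering the bead counts. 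Applying the $e = 1$ identity, with $t$ replaced by $t^e$, to each component and summing against $t^i$ produces
\begin{align*}
\sf{Z}(t, \beta_{\pi, s}) - \sf{Z}(t, \beta_{\lambda, s})
	= (1 - t^{-e}) \sum_{i = 0}^{e - 1} t^i\, \sf{Z}\bigl(t^e,\, \sf{c}_{|\varpi^{(i)}, r^{(i)}\rangle}\bigr),
\end{align*}
and the inner sum matches $\sf{Z}(t, \sf{c}_{\Upsilon_e(|\pi, s\rangle)})$ directly from the definition of $\sf{c}_{|\vec{\varpi}, \vec{r}\rangle}$, whose exponents read $e(y(\square) - x(\square) + r^{(j)}) + j$.

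The main obstacle, such as it is, is bookkeeping: verifying that the $\vec{r}$ arising from $\Upsilon_e(|\pi, s\rangle)$ and from $\Upsilon_e(|\lambda, s\rangle)$ coincide. This falls out of the abacus characterization of $\lambda$ as the partition whose $e$-abacus has every bead pushed as far back as possible in each component without altering bead counts, i.e.\ without changing the charge vector recorded by $\Upsilon_e$.
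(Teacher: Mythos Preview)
Your approach is correct and is precisely the kind of explicit calculation the paper has in mind; since the paper's own proof reads in full ``By explicit calculation,'' there is nothing further to compare.  The telescoping for $e=1$ and the reduction to it via the residue decomposition $\sf{Z}(t,\beta)=\sum_{i=0}^{e-1}t^{i}\,\sf{Z}(t^{e},\upsilon_e^{(i)}(\beta))$ together with the fact that $\Upsilon_e(|\lambda,s\rangle)=|\emptyset^{e},\vec r\rangle$ shares the same charge vector $\vec r$ as $\Upsilon_e(|\pi,s\rangle)$ (which is exactly the paper's definition of the charged $e$-core) are the two ingredients one needs, and you have identified both.
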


\begin{proof}
Write $\pi = (\pi_1, \ldots, \pi_\ell)$, where $\pi_1 \geq \pi_2 \geq \cdots \geq \pi_\ell > 0$. 
Then both sides of the first identity are equal to
\begin{align}
t^s\, \sum_{1 \leq j \leq \ell} {(t^{\pi_j - j} - t^{-j})}.
\end{align}
To prove the second identity, write 
\begin{align}
{\Upsilon_e(|\pi, s\rangle)} = ((\pi^{(0)}, \ldots, \pi^{(e - 1)}), (s^{(0)}, \ldots, s^{(e - 1)})).
\end{align}
Then we have
\begin{align}
(1 - t^{-e})\, \sf{Z}(t, \sf{c}_{\Upsilon_e(|\pi, s\rangle)})
&=	(1 - t^{-e})\, 
	\sum_{0 \leq i \leq e - 1}
		t^i \sf{Z}(t^e, \sf{c}_{\Upsilon_e(|\pi^{(i)}, s^{(i)}\rangle)})\\
&=	\sum_{0 \leq i \leq e - 1}
		t^i \left(\sf{Z}(t^e, \beta_{\pi^{(i)},s^{(i)}}) - \sf{Z}(t^e, \bb{Z}_{< s^{(i)}})\right)\\
&=
	\sf{Z}(t, \beta_{\pi, s}) - \sf{Z}(t, \beta_{\lambda, s}),
\end{align}
where the second equality uses the first claim of the lemma.
\end{proof}

\begin{prop}\label[prop]{prop:content}
Fix integers $n, e, m > 0$ and partitions $\pi, \rho$ of size $n$ that have the same $e$-core.
Consider the following statements:
\begin{enumerate}
\item 	$\sf{c}_{\Upsilon_e(|\pi, s\rangle)}^{(m)} = \sf{c}_{\Upsilon_e(|\rho, s\rangle)}^{(m)}$ for any $s \in \bb{Z}$.
\item 	$\pi, \rho$ have the same $m$-core.
\end{enumerate}
(1) implies (2).
If $m$ is coprime to $e$, then (2) implies (1).
\end{prop}

\begin{proof}
From \eqref{eq:mod-m}, we observe that (1) holds if and only if 
\begin{align}
\text{$1 - t^{-m}$ divides $\sf{Z}(t, \sf{c}_{\Upsilon_e(|\pi, s\rangle)}) - \sf{Z}(t, \sf{c}_{\Upsilon_e(|\rho, s\rangle)})$ in $\bb{Z}[\![t^{-1}]\!][t]$}.
\end{align}
From the abacus definition of the $m$-core of a partition, (2) holds if and only if 
\begin{align}
\text{$1 - t^{-m}$ divides $\sf{Z}(t, \beta_{\pi, s}) - \sf{Z}(t, \beta_{\rho, s})$ in $\bb{Z}[\![t^{-1}]\!][t]$}.
\end{align}
Finally, by \Cref{lem:content},
\begin{align}
(1 - t^{-e})(\sf{Z}(t, \sf{c}_{\Upsilon_e(|\pi, s\rangle)}) - \sf{Z}(t, \sf{c}_{\Upsilon_e(|\rho, s\rangle)}))
	= \sf{Z}(t, \beta_{\pi, s}) - \sf{Z}(t, \beta_{\rho, s}).
\end{align}
The desired claims follow.
\end{proof}

By combining \Cref{prop:content} with \Cref{cor:content}, we get \Cref{mainthm:1} for the generic groups $\GGGL_n$ and $\GGGU_n$:

\begin{cor}\label[cor]{cor:gl-gu}
Let $\bbb{G}$ be $\GGGL_n$ or $\GGGU_n$.
Let $(\bbb{L}, \lambda)$ be a $\Phi_e$-cuspidal pair and $(\bbb{M}, \mu)$ a $\Phi_m$-cuspidal pair for $\bbb{G}$.
If $e$ and $m$ are coprime, then $\Irr(W^{\GGGL_n}_{\bbb{L}, \lambda})_{\bbb{M}, \mu}$ is a single $H^{\GGGL_n}_{\bbb{L}, \lambda}(\zeta_m)$-block and $\Irr(W^{\GGGL_n}_{\bbb{M}, \mu})_{\bbb{L}, \lambda}$ is a single $H^{\GGGL_n}_{\bbb{M}, \mu}(\zeta_e)$-block.
\end{cor}

\section{Uglov's Bijections}\label{sec:fock}

\subsection{}

The goal of this section is to prove \Cref{mainthm:2}, relating the bijections in \Cref{mainthm:1} to those introduced by Uglov in~\cite{uglov}.
We first introduce the maps $\Upsilon_m^e$ and $\tilde{w}_{e, m, s}$ needed for the statement.
It will be convenient to write
\begin{align}
\bb{Z}_{[0, e)} = \{0, 1, \ldots, e - 1\}
\end{align}
throughout what follows.

\subsection{}\label{subsec:uglov}

The maps $\Upsilon_m^e$ will recover the maps $\Upsilon_m$ from \S\ref{subsec:partitions} when $e = 1$.
Just as we defined $\Upsilon_m$ in terms of a bijection $\upsilon_m : \sf{B} \xrightarrow{\sim} \sf{B}^m$, so we define $\Upsilon_m^e$ in terms of a bijection
\begin{align}
\upsilon_m^e = (\upsilon_m^{e, (0)}, \upsilon_m^{e, (1)}, \ldots, \upsilon_m^{e, (m - 1)}) : \sf{B}^e \xrightarrow{\sim} \sf{B}^m.
\end{align}
Explicitly,
\begin{align}
\begin{array}{r@{\:}c@{\:}ll}
	eq + r_e \in \upsilon_m^{e, (r_m)}(\vec{\beta})
	&\iff
	&mq + r_m \in \beta^{(r_e)}
	&\text{for all $q, r_e, r_m \in \bb{Z}$ such that }\\
	&&&\text{$0 \leq r_e < e$ and $0 \leq r_m < m$}.
\end{array}
\end{align}
We define $\Upsilon_m^e$ to be the composition
\begin{align}
\Upsilon_m^e : \Pi^e \times \bb{Z}^e \xrightarrow{\beta} \sf{B}^e \xrightarrow{\upsilon_m^e} \sf{B}^m \xrightarrow{\beta^{-1}} \Pi^m \times \bb{Z}^m.
\end{align}
In particular, $\Upsilon_m^e$ is also a bijection.
It is essentially the map $(\lambda_l, s_l) \mapsto (\lambda_n, s_n)$ constructed by Uglov on pages 273--274 in~\cite[\S{4.1}]{uglov}, except that Uglov's $(l, n)$ is our $(e, m)$, and moreover, his version of $\beta$ differs from ours by an overall shift by $1$.
For another exposition, see Appendix A in~\cite{gerber}, where $\Upsilon_m^e$ is essentially the map that Gerber would denote by $\dot{\tau} \circ \tau^{-1}$.

\subsection{}

To study the combinatorics of $\upsilon_m^e$, it is convenient to introduce the bijection
\begin{align}
(q_m, r_m) : \bb{Z} \xrightarrow{\sim} \bb{Z} \times \bb{Z}_{[0, m)}
\end{align}
defined by $x = mq_m(x) + r_m(x)$.
It is the $e = 1$ case of the bijection
\begin{align}
(q_m^e, r_m^e) : \bb{Z} \times \bb{Z}_{[0, e)} \xrightarrow{\sim} \bb{Z} \times \bb{Z}_{[0, m)}
\end{align}
defined by $q_m^e(x, y) = e q_m(x) + y$ and $r_m^e(x, y) = r_m(x)$.
With this notation,
\begin{align}
\upsilon_m^{e, (r)}(\vec{\beta}) = \{q_m^e(x, y) \mid \text{$x \in \beta^{(y)}$ such that $r_m^e(x, y) = r$}\}
\end{align}
for all $r$.

\begin{rem}\label[rem]{rem:mutually-inverse}
For any $e, m$, we have $(q_e^m, r_e^m) \circ (q_m^e, r_m^e) = \id$.
Indeed, this follows from observing that for all $(x, y) \in \bb{Z} \times \bb{Z}_{[0, e)}$, we have $q_e(e q_m(x) + y) = q_m(x)$ and $r_e(e q_m(x) + y) = y$.
\end{rem}

\begin{lem}\label[lem]{lem:q-r}
Let $(a, b) \in \bb{Z} \times \bb{Z}_{[0, e)}$ and $(c, d) \in \bb{Z} \times \bb{Z}_{[0, m)}$.
\begin{enumerate}
\item 	If $q_m^e(a, b) = c$ and $r_m^e(a, b) = d$, then $ea + mb  = mc + ed$.

\item 	If $e$ and $m$ are coprime, then the converse of (1) holds.

\end{enumerate}
\end{lem}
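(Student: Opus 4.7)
The plan is to unwind the definitions in part (1) by direct substitution, and then, for the converse in part (2), use B\'ezout/coprimality to recover $(a,b)$ from the single scalar equation $ea + mb = mc + ed$.

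For part (1): assume $c = q_m^e(a,b) = eq_m(a) + b$ and $d = r_m^e(a,b) = r_m(a)$. Then
\begin{align}
mc + ed = m(eq_m(a) + b) + e r_m(a) = e(mq_m(a) + r_m(a)) + mb = ea + mb,
\end{align}
using only that $a = m q_m(a) + r_m(a)$. This is a one-line substitution.

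For part (2): assume $\gcd(e,m) = 1$ and $ea + mb = mc + ed$ with $(a,b) \in \mathbb{Z} \times \mathbb{Z}_{[0,e)}$ and $(c,d) \in \mathbb{Z} \times \mathbb{Z}_{[0,m)}$. Rearranging gives $e(a - d) = m(c - b)$. Since $\gcd(e,m) = 1$, we deduce $m \mid (a - d)$, so there exists $k \in \mathbb{Z}$ with $a = d + mk$, and then $c = b + ek$. Because $d$ already lies in $[0,m)$, this exhibits the division-with-remainder decomposition $a = mk + d$ uniquely, so $q_m(a) = k$ and $r_m(a) = d$. Substituting back yields $c = b + e q_m(a) = q_m^e(a,b)$ and $d = r_m(a) = r_m^e(a,b)$, as required.

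There is no real obstacle; the only subtle point is that the coprimality hypothesis in (2) is essential, since for $\gcd(e,m) = g > 1$ the divisibility $e(a - d) = m(c - b)$ gives only $(m/g) \mid (a - d)$, which is too weak to pin down $(a,b)$ uniquely. It is worth flagging this in the write-up because the preceding remark, which identifies $\Upsilon_m^e$ and $\Upsilon_e^m$ as mutually inverse, implicitly uses part (1) only, whereas the converse in part (2) is what will later let us recognize the affine-permutation shifts $\tilde w_{e,m,s}$ through the identity $ea + mb = mc + ed$.
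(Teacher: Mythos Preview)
Your proof is correct and essentially identical to the paper's: part (1) is the same one-line substitution, and for part (2) the paper also reduces $ea + mb = mc + ed$ modulo $m$, uses coprimality to get $a \equiv d \pmod m$ (hence $r_m(a) = d$), and then recovers $c = e q_m(a) + b$. The only cosmetic difference is that you name the quotient $k$ explicitly, whereas the paper writes $q_m(a) = \tfrac{1}{m}(a - d)$ directly.
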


\begin{proof}
If $q_m^e(a, b) = c$ and $r_m^e(a, b) = d$, then
\begin{align}
mc + e d
	&=	m(e q_m(a) + b) + e r_m(a)
	=	e (m q_m(a) + r_m(a)) + mb
	=	e a + m b.
\end{align}
Conversely, if $e a + mb  = mc + e d$, then $e a \equiv e d \pmod{m}$.
If $e$ and $m$ are coprime, then $a \equiv d \pmod{m}$, whence $r_m(a) = d$ and
\begin{align}
q_m^e(a, b) &= e q_m(a) + b = e(\tfrac{1}{m}(a - d)) + b = c,
\end{align}
as claimed.
\end{proof}

\subsection{}

Henceforth, suppose that $e$ and $m$ are coprime.
We will rewrite \Cref{lem:q-r}(2) in stages.
First, it is equivalent to the statement that, for all $b \in \bb{Z}_{[0, e)}$ and $d \in \bb{Z}_{[0, m)}$ and $x \in \bb{Z}$ such that
\begin{align}\label{eq:mb-ed}
\left\{\begin{array}{r@{\:}l}
x &\equiv m b \pmod{e},\\
x &\equiv e d \pmod{m},
\end{array}\right.
\end{align}
the map $(q_m^e, r_m^e)$ sends $(\tfrac{1}{e}(x - mb), b) \mapsto (\tfrac{1}{m}(x - e d), d)$.
Now fix arbitrary integers $s$ and $t$.
Observe that
\begin{align}
\tfrac{1}{e}(x - mb) &= q_e(x + s) - q_e(mb + s),\\
\tfrac{1}{m}(x - e d) &= q_m(x + t) - q_m(e d + t),\\
r_e(mb + s) &= r_e(x + s),\\
r_m(e d + t) &= r_m(x + t).
\end{align}
Let $w_{e, m, s} : \bb{Z}_{[0, e)} \xrightarrow{\sim} \bb{Z}_{[0, e)}$ be the permutation defined by
\begin{align}
w_{e, m, s}(r_e(mb + s)) = b
	\quad\text{for all $b \in \bb{Z}_{[0, e)}$}.
\end{align}
Let $\xi_{e, m, s} : \bb{Z} \times \bb{Z}_{[0, e)} \xrightarrow{\sim} \bb{Z} \times \bb{Z}_{[0, e)}$ be the bijection defined by
\begin{align}
\xi_{e, m, s}(a, b) = (a - q_e(mb + s), b).
\end{align}
Then for all $x$ satisfying \eqref{eq:mb-ed}, we have
\begin{align}
\xi_{e, m, s} (q_e(x + s), w_{e, m, s}(r_e(x + s)))
	&= (\tfrac{1}{e}(x - mb), b),\\
\xi_{m, e, t} (q_m(x + t), w_{m, e, t}(r_m(x + t)))
	&= (\tfrac{1}{m}(x - e d), d).
\end{align}
Let $\tilde{w}_{e, m, s}$ be the composition
\begin{align}\label{eq:affine-perm}
\tilde{w}_{e, m, s} : \bb{Z} \times \bb{Z}_{[0, e)} \xrightarrow{{\id} \times w_{e, m, s}} \bb{Z} \times \bb{Z}_{[0, e)} \xrightarrow{\xi_{e, m, s}} \bb{Z} \times \bb{Z}_{[0, e)}.
\end{align}
It may be regarded as an element of $\bb{Z}^e \rtimes S_e$: that is, an affine permutation of the cocharacter lattice of $\GGGL_e$.


\begin{prop}
For coprime integers $e, m > 0$ and arbitrary integers $s, s',t,t'$ such that $s' - s = t' - t$, the following diagram commutes:
\begin{equation}
\begin{tikzpicture}[baseline=(current bounding box.center), >=stealth]
\matrix(m)[matrix of math nodes, row sep=2.5em, column sep=11em, text height=2ex, text depth=0.5ex]
{ 		
	\bb{Z} \times \bb{Z}_{[0, e)}
		&\bb{Z}
		&\bb{Z} \times \bb{Z}_{[0, m)}\\	
	\bb{Z} \times \bb{Z}_{[0, e)}
		&
		&\bb{Z} \times \bb{Z}_{[0, m)}\\
};
\path[->, font=\scriptsize, auto]
(m-1-2)		edge node[above]{$(q_e(x' + s'), r_e(x' + s')) \mapsfrom x'$} (m-1-1)
			edge node{$x' \mapsto (q_m(x' + t'), r_m(x' + t'))$} (m-1-3)
(m-1-1)		edge node[left]{$\tilde{w}_{e, m, s}$} (m-2-1)
(m-1-3)		edge node{$\tilde{w}_{m, e, t}$} (m-2-3)
(m-2-1)		edge node{$(q_m^e, r_m^e)$} (m-2-3);
\end{tikzpicture}
\end{equation}
\end{prop}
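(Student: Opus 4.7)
The plan is to chase an arbitrary integer $x \in \bb{Z}$ around both paths of the diagram and observe that the identities displayed just before the statement turn this into a one-line verification via \Cref{lem:q-r}(2).

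First I would unwind the left-hand path. Starting from $x$, the top-left map returns $(q_e(x+s), r_e(x+s))$. Applying $\mathrm{id}\times w_{e,m,s}$ replaces the second coordinate by the unique $b \in \bb{Z}_{[0,e)}$ with $r_e(mb+s)=r_e(x+s)$, equivalently $mb\equiv x\pmod{e}$. Then $\xi_{e,m,s}$ subtracts $q_e(mb+s)$ from the first coordinate, and the identity
\begin{align}
q_e(x+s)-q_e(mb+s) = \tfrac{1}{e}(x-mb)
\end{align}
(which is the first line of the displayed computations in the text) shows that the image is $(\tfrac{1}{e}(x-mb), b)$. Note that $\tfrac{1}{e}(x-mb)$ is a well-defined integer precisely because $mb\equiv x\pmod{e}$. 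Symmetrically, the right-hand path yields $(\tfrac{1}{m}(x-ed),d)$, where $d\in\bb{Z}_{[0,m)}$ is the unique element with $ed\equiv x\pmod{m}$.

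Next I would apply \Cref{lem:q-r}(2) to conclude that $(q_m^e,r_m^e)$ sends $(\tfrac{1}{e}(x-mb),b)$ to $(\tfrac{1}{m}(x-ed),d)$. Because $e$ and $m$ are coprime, the lemma reduces the claim to the arithmetic identity
\begin{align}
e\cdot\tfrac{1}{e}(x-mb)\;+\;m\cdot b \;=\; m\cdot\tfrac{1}{m}(x-ed)\;+\;e\cdot d,
\end{align}
which collapses on both sides to $x$. This completes the verification.

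There is essentially no serious obstacle: all non-trivial content sits in \Cref{lem:q-r}(2) and in the four displayed identities in the paragraph preceding the statement, which were introduced precisely to make the two paths collapse to the same pair. The only thing one has to be careful about is the integrality of the intermediate entries and the fact that $b$ and $d$ lie in their correct residue ranges; both are automatic from the definitions of $w_{e,m,s}$ and $w_{m,e,t}$. The coprimality hypothesis enters exactly once, through the converse direction of \Cref{lem:q-r}.
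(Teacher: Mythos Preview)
Your proof is correct and follows exactly the route the paper takes: the paper's own argument is the discussion immediately preceding the proposition (ending with ``Our work has shown:''), and you have faithfully reproduced it by chasing $x$ through both composites, using the four displayed identities to identify the images as $(\tfrac{1}{e}(x-mb),b)$ and $(\tfrac{1}{m}(x-ed),d)$, and then invoking \Cref{lem:q-r}(2) via the identity $ea+mb=mc+ed=x$.
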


\begin{proof}
Let $x = x' + s' - s = x' + t' - t$.
Let $b \in \bb{Z}_{[0, e)}$ and $d \in \bb{Z}_{[0, m)}$ be defined in terms of $x$ using~\eqref{eq:mb-ed}.
Then the images of $x'$ in the bottom left and right corners of the diagram are $(\tfrac{1}{e}(x - mb), b)$ and $(\tfrac{1}{m}(x - ed), d)$, respectively, so the proposition follows.
\end{proof}

We again write $\tilde{w}_{e, m, s}$ and $\tilde{w}_{m, e, t}$ for the self-maps of $\sf{B}^e$ and $\sf{B}^m$ that these affine permutations respectively induce, and for the corresponding self-maps of $\Pi^e \times \bb{Z}^e$ and $\Pi^m \times \bb{Z}^m$.
Explicitly, if $\vec{\beta} = (\beta^{(0}, \ldots, \beta^{(e - 1)}) \in \sf{B}^e$, then $\tilde{w}_{e, m, s}(\vec{\beta}) = (\tilde{w}_{e, m, s}^{(0)}(\vec{\beta}), \ldots, \tilde{w}_{e, m, s}^{(e - 1)}(\vec{\beta}))$, where
\begin{align}
\tilde{w}_{e, m, s}^{(j)}(\vec{\beta})
	&=	\{ y \in \bb{Z} \mid \text{$(y, j) = \tilde{w}_{e, m, s}(x, i)$ for some $x \in \beta^{(i)}$} \} \\
	&=	\{ x - q_e(mj + s) \mid x \in \beta^{(r_e(mj + s))} \}.
\end{align}
With this notation, we arrive at:

\begin{cor}\label[cor]{cor:uglov-commute}
For any coprime integers $e, m > 0$ and arbitrary integers $s, s', t, t'$ such that $s' - s = t' - t$, the following diagram commutes:
\begin{equation}
\begin{tikzpicture}[baseline=(current bounding box.center), >=stealth]
\matrix(m)[matrix of math nodes, row sep=2.5em, column sep=6em, text height=2ex, text depth=0.5ex]
{ 		
	\sf{B}^e
		&\sf{B}
		&\sf{B}^m\\	
	\sf{B}^e
		&
		&\sf{B}^m\\
};
\path[->, font=\scriptsize, auto]
(m-1-2)		edge node[above]{$\upsilon_e(\beta + s') \mapsfrom \beta$} (m-1-1)
			edge node{$\beta \mapsto \upsilon_m(\beta + t')$} (m-1-3)
(m-1-1)		edge node[left]{$\tilde{w}_{e, m, s}$} (m-2-1)
(m-1-3)		edge node{$\tilde{w}_{m, e, t}$} (m-2-3)
(m-2-1)		edge node{$\upsilon_m^e$} (m-2-3);
\end{tikzpicture}
\end{equation}
Hence the following diagram commutes:
\begin{equation}
\begin{tikzpicture}[baseline=(current bounding box.center), >=stealth]
\matrix(m)[matrix of math nodes, row sep=2.5em, column sep=7em, text height=2ex, text depth=0.5ex]
{ 		
	\Pi^e \times \bb{Z}^e
		&\Pi
		&\Pi^m \times \bb{Z}^m\\	
	\Pi^e \times \bb{Z}^e
		&
		&\Pi^m \times \bb{Z}^m\\
};
\path[->, font=\scriptsize, auto]
(m-1-2)		edge node[above]{$\Upsilon_e(|\pi, s'\rangle) \mapsfrom \pi$} (m-1-1)
			edge node{$\pi \mapsto \Upsilon_m(|\pi, t'\rangle)$} (m-1-3)
(m-1-1)		edge node[left]{$\tilde{w}_{e, m, s}$} (m-2-1)
(m-1-3)		edge node{$\tilde{w}_{m, e, t}$} (m-2-3)
(m-2-1)		edge node{$\Upsilon_m^e$} (m-2-3);
\end{tikzpicture}
\end{equation}
\end{cor}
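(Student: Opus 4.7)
The plan is to reduce both commutative squares to the proposition by observing that every map in both diagrams is a pointwise extension of a map appearing in the proposition's diagram. Once we identify an $e$-abacus $\vec{\beta} \in \sf{B}^e$ with the subset $\{(y, i) \in \bb{Z} \times \bb{Z}_{[0, e)} : y \in \beta^{(i)}\}$, and similarly for $\sf{B}^m$, the top-left map sends $\beta \in \sf{B}$ to $\{(q_e(x + s), r_e(x + s)) : x \in \beta\}$, the top-right map sends it to $\{(q_m(x + t), r_m(x + t)) : x \in \beta\}$, the bottom map $\upsilon_m^e$ is the pointwise image under $(q_m^e, r_m^e)$, and the two vertical maps were defined in exactly this way from the affine permutations of~\eqref{eq:affine-perm}. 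Consequently, for fixed $\beta \in \sf{B}$, the commutativity of the first square at $\beta$ reduces to the union over $x \in \beta$ of the identity
\[
(q_m^e, r_m^e)\bigl(\tilde{w}_{e, m, s}(q_e(x + s), r_e(x + s))\bigr) = \tilde{w}_{m, e, t}\bigl(q_m(x + t), r_m(x + t)\bigr),
\]
which is exactly the content of the proposition.

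For the second square, I would exploit the fact that all four corners are obtained from $\sf{B}$, $\sf{B}^e$, and $\sf{B}^m$ by transport along the bijection $\beta : \Pi \times \bb{Z} \xrightarrow{\sim} \sf{B}$. Since $\beta_{\pi, s} = \beta_{\pi, 0} + s$ and $\Upsilon_e = \beta^{-1} \circ \upsilon_e \circ \beta$, the two top maps $\pi \mapsto \Upsilon_e(|\pi, s\rangle)$ and $\pi \mapsto \Upsilon_m(|\pi, t\rangle)$ factor as the corresponding upper maps of the first square applied to $\beta_{\pi, 0}$, and the vertical maps together with $\Upsilon_m^e$ are obtained by analogous conjugation with $\beta$. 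Thus the second square is a formal consequence of the first, evaluated at $\beta_{\pi, 0}$.

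The only step requiring any genuine work beyond unpacking notation is the bookkeeping check that the pointwise images actually remain in $\sf{B}^e$ and $\sf{B}^m$, i.e., are bounded above and cofinite below. This holds because $\tilde{w}_{e, m, s}$ shifts the first coordinate by a column-dependent but bounded amount via $\xi_{e, m, s}$, and $(q_m^e, r_m^e)$ acts by the affine-linear formula $q_m^e(a, b) = e q_m(a) + b$, so both operations preserve the required finiteness conditions column by column. I expect no substantive obstacle beyond this clerical verification.
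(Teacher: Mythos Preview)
Your proposal is correct and follows exactly the paper's approach: the paper presents the corollary as an immediate consequence of the preceding proposition together with the explicit description of how $\tilde{w}_{e,m,s}$ and $\upsilon_m^e$ act on abaci, which is precisely your pointwise reduction. You have simply spelled out in detail what the paper encapsulates in the phrase ``With this notation, we arrive at.''
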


\subsection{\texorpdfstring{Proof of Theorem \ref{mainthm:2}}{Proof of Theorem 4}}\label{subsec:proof-of-mainthm-2}

The theorem asserts that if $\bbb{G} = \GGGL_n$ and $e, m$ are coprime, then for any $\Phi_e$-cuspidal pair $(\bbb{L}, \lambda)$ and $\Phi_m$-cuspidal pair $(\bbb{M}, \mu)$ and integers $s, t$ such that $\ell_\lambda - s = \ell_\mu - t$, we have a commutative diagram:
\begin{equation}
\begin{tikzpicture}[baseline=(current bounding box.center), >=stealth]
\matrix(m)[matrix of math nodes, row sep=2.5em, column sep=5em, text height=2ex, text depth=0.5ex]
{ 		
	\Irr(W^\bbb{G}_{\bbb{L}, \lambda})
		&\Uch(\bbb{G})_{\bbb{L}, \lambda} \cap \Uch(\bbb{G})_{\bbb{M}, \mu}
		&\Irr(W^\bbb{G}_{\bbb{M}, \mu})\\
	\Pi^e \times \bb{Z}^e
		&\Pi
		&\Pi^m \times \bb{Z}^m\\	
	\Pi^e \times \bb{Z}^e
		&
		&\Pi^m \times \bb{Z}^m\\
};
\path[->, font=\scriptsize, auto]
(m-1-2)	
	edge node[above]{$\chi_{\bbb{L}, \lambda}^\bbb{G}$} (m-1-1)
	edge node{$\chi_{\bbb{M}, \mu}^\bbb{G}$} (m-1-3)
	edge (m-2-2)
(m-1-1)
	edge (m-2-1)
(m-1-3)
	edge (m-2-3)
(m-2-2)
	edge node[above]{$\Upsilon_e^1(|\rho,  \ell_\lambda\rangle) \mapsfrom \rho$} (m-2-1)
	edge node{$\rho \mapsto \Upsilon_m^1(|\rho,  \ell_\mu\rangle)$} (m-2-3)
(m-2-1)
	edge node[left]{$\tilde{w}_{e, m, s}$} (m-3-1)
(m-2-3)
	edge node{$\tilde{w}_{m, e, t}$} (m-3-3)
(m-3-1)
	edge node{$\Upsilon_m^e$} (m-3-3);
\end{tikzpicture}
\end{equation}
The bottom rectangle commutes by \Cref{cor:uglov-commute}.
The top left and right squares commute, and are compatible with the commutative squares of \Cref{thm:bmm}(2a), due to their definition in \eqref{eq:uch-irr-partitions} and the comment that follows it.

\subsection{\texorpdfstring{Proof of Corollary \ref{maincor:1}}{Proof of Corollary 5}}

Below, we keep the setup of \S\ref{subsec:proof-of-mainthm-2}.
We also use the notation and setup of \Cref{sec:blocks}.
Thus $\zeta_m = e^{2\pi i/m}$ for all $m$.

Let $\sf{b}$, \emph{resp.}\@ $\sf{c}$, be the unique block of $\sf{Rep}(H^\bbb{G}_{\bbb{L}, \lambda}(\zeta_m))$, \emph{resp.}\@ $\sf{Rep}(H^\bbb{G}_{\bbb{M}, \mu}(\zeta_e))$, determined by $(\bbb{M}, \mu)$, \emph{resp.}\@ $(\bbb{L}, \lambda)$ under \Cref{cor:gl-gu}.
In the notation of \S\ref{subsec:s-charge}, let $\vec{\nu}_e$, \emph{resp.}\@ $\vec{\nu}_m$, be a $\cal{T}^\bbb{G}_{\bbb{L}, \lambda, m}$-parameter, \emph{resp.}\@ a $\cal{T}^\bbb{G}_{\bbb{M}, \mu, e}$-parameter.
We want to show that $\chi_\sf{c}^\sf{b}$ is categorified by an equivalence
\begin{align}
\sf{D}^b(\sf{O}_{W^\bbb{G}_{\bbb{L}, \lambda}}^\rat(\vec{\nu}_e)_\sf{b}) \xrightarrow{\sim} \sf{D}^b(\sf{O}_{W^\bbb{G}_{\bbb{M}, \mu}}^\rat(\vec{\nu}_m)_\sf{c}),
\end{align}
by relating it to a categorified form of level-rank duality.

For convenience, let $C_{e, a} = \bb{Z}_e \wr S_a$, with the convention that $C_{e, 0}$ is the trivial group.
Then $W^\bbb{G}_{\bbb{L}, \lambda} = C_{e, a}$ and $W^\bbb{G}_{\bbb{M}, \mu} = C_{m, d}$ for some $a, d$.
Let $\eta_m \in \bb{C}^\times$ be any primitive $m$th root of unity.
For any $\vec{s} \in \bb{Z}^e$, let $\cal{T}_{e, m, \vec{s}} : \bb{Z}[\vec{u}_{C_{e, a}}^{\pm 1}] \to \bb{C}$  be the ring homomorphism defined by
\begin{align}\label{eq:hecke-e-m-s}
	\left\{\begin{array}{r@{\:}ll}
		\cal{T}_{e, m, \vec{s}}(u_{\tau, j}) 
		&= \eta_m^{s^{(j)}}
		&\text{for all $j$},\\
		\cal{T}_{e, m, \vec{s}}(u_{\sigma, 0})
		&= 1,\\
		\cal{T}_{e, m, \vec{s}}(u_{\sigma, 1})
		&= -\eta_m,
	\end{array}\right.
\end{align}
using the same notation $u_{\tau, j}, u_{\sigma, 0}, u_{\sigma, 1}$ as \S\ref{subsec:gl}, and let $H_{C_{e, a}, m, \vec{s}} = H_{C_{e, a}, \cal{T}_{e, m, \vec{s}}}$.
Let $\eta_e, \cal{T}_{m, e, \vec{r}}, H_{C_{m, d}, e, \vec{r}}$ be defined similarly, with $m, e, d, \vec{r}$ in place of $e, m, a, \vec{s}$.
Note that these constructions are essentially independent of $a, d$.

The following result, a form of the Chuang--Miyachi conjecture~\cite[Conj.\ 6, Rem.\ 7]{cm_12}, can be obtained from~\cite[Thm.\ 7.4]{rsvv} (see also~\cite[\S{6}]{svv}).
Note below that $\Upsilon_m^e$ and $\Upsilon_e^m$ are inverse to each other, by~\Cref{rem:mutually-inverse}.
\begin{thm}[Level-Rank Duality]\label[thm]{thm:level-rank}
Fix integers $e, m > 0$.
Let $\vec{s} \in \bb{Z}^e$ and $\vec{r} \in \bb{Z}^m$, 
and let
\begin{align}
	\Pi_{\vec{s} }^e = \{|\vec{\varpi}, \vec{s} \rangle \mid \vec{\varpi} \in \Pi^e\}
	\quad\text{and}\quad
	\Pi_{\vec{r} }^m = \{|\vec{\varrho}, \vec{r} \rangle \mid \vec{\varrho} \in \Pi^m\}.
\end{align}
Then:
\begin{enumerate}
	\item 	
	For any $a \geq 0$, the set of $e$-partitions $\vec{\pi}$ such that $\sum_i |\pi^{(i)}| = a$ and $\Upsilon_m^e(|\vec{\pi}, \vec{s} \rangle) \in \Pi_{\vec{r}}^m$ is either empty or indexes an $H_{C_{e, a}, m, \vec{s} }$-block $\Irr(C_{e, a})_{\sf{b} } \subseteq \Irr(C_{e, a})$.
	
	\item 	
	For any $d \geq 0$, the set of $m$-partitions $\vec{\varrho}$ such that $\sum_j |\varrho^{(j)}| = d$ and $\Upsilon_e^m(|\vec{\varrho}, \vec{r} \rangle) \in  \Pi_{\vec{s} }^e$ is either empty or indexes an $H_{C_{m, d}, e, \vec{r} }$-block $\Irr(C_{m, d})_{\sf{c} } \subseteq \Irr(C_{m, d})$.

\item 
	The bijection $\Pi_{\vec{s} }^e \cap \Upsilon_e^m(\Pi_{\vec{r} }^m) \xrightarrow{\sim} \Pi_{\vec{r} }^m \cap \Upsilon_m^e(\Pi_{\vec{s} }^e)$ induced by $\Upsilon_m^e$ matches blocks, and hence, restricts to bijections of the form
	\begin{align}
	\Irr(C_{e, a})_{\sf{b} } \xrightarrow{\sim} \Irr(C_{m, d})_{\sf{c} }.
	\end{align}
	If $\vec{\nu}_e $ is a $\cal{T}_{e, m, \vec{s} }$-parameter and $\vec{\nu}_m $ is a $\cal{T}_{m, e, \vec{r} }$-parameter, then the bijection above is categorified by a Koszul-type duality (see~\Cref{foot:koszul-ringel}) between the derived categories of $\sf{O}_{C_{e, a}}^\rat(\vec{\nu}_e)_{\sf{b}}$ and $\sf{O}_{C_{m, d}}^\rat(\vec{\nu}_m)_{\sf{c}}$.
	
\end{enumerate} 
\end{thm}


Henceforth, we set $\eta_e = \zeta_e^m$ and $\eta_m = \zeta_m^e$, using the coprimality of $e$ and $m$.
Pick $\vec{s} \in \bb{Z}^e$ and $\vec{r} \in \bb{Z}^m$ such that 
\begin{align}\label{eq:starting-charges}
e\vec{s} &\equiv \vec{a}_e(\lambda) \pmod{m},\\
m\vec{r} &\equiv \vec{a}_m(\mu) \pmod{e}
\end{align}
in the notation of \S\ref{subsec:gl}.
Then $\cal{T}^\bbb{G}_{\bbb{L}, \lambda, m} = \cal{T}_{e, m, \vec{s}}$ and $\cal{T}^\bbb{G}_{\bbb{M}, \mu, e} = \cal{T}_{m, e, \vec{r}}$.
We will give $\vec{s}^\dagger$, $\vec{r}^\dagger$, $\sf{b}^\dagger$, $\sf{c}^\dagger$, $\vec{\nu}_e^\dagger$, $\vec{\nu}_m^\dagger$ that fit into the situation of~\Cref{thm:level-rank}, such that for certain $s, t \in \bb{Z}$, there are equivalences
\begin{align}\label{eq:derived-equiv}
\sf{D}^b(\sf{O}_{C_{e, a}}^\rat(\vec{\nu}_e)_\sf{b})
	&\xrightarrow{\sim} 
	\sf{D}^b(\sf{O}_{C_{e, a}}^\rat(\vec{\nu}_e^\dagger)_{\sf{b}^\dagger}),
&\sf{D}^b(\sf{O}_{C_{m, d}}^\rat(\vec{\nu}_m)_\sf{c})
	&\xrightarrow{\sim} 
	\sf{D}^b(\sf{O}_{C_{m, d}}^\rat(\vec{\nu}_m^\dagger)_{\sf{c}^\dagger}),
\end{align}
categorifying the affine permutations $\tilde{w}_{e, m, s}, \tilde{w}_{m, e, t}$ in \S\ref{subsec:proof-of-mainthm-2}.


Let the affine symmetric group $\bb{Z}^e \rtimes S_e$ act on the set of $C_{e, a}$-invariant vectors in $\bb{C}^{\Refl(C_{e, a})}$ as follows.
First, such a vector $\vec{\nu}$ is determined by the values $\kappa_{\cal{C}, j}$ in \eqref{eq:kappa-vs-nu}.
These, in turn, are determined by the values $\kappa_{\tau, j}$ for $0 \leq j < e$ and $\kappa_{\sigma, 1}$, where the labeling is analogous to that of \S\ref{subsubsec:ariki-koike}.
The action of $\bb{Z}^e \rtimes S_e$ on the vector $\vec{\nu}$ arises from the action of $S_e$, \emph{resp.}\@ $\bb{Z}^e$, on the vector $(\kappa_{\tau, j} + \tfrac{j}{e})_{j = 0}^{e - 1} \in \bb{C}^e$ by permutation, \emph{resp.}\@ translation.

The following result, conjectured by Rouquier~\cite[Conj.\@ 5.6]{rouquier_08}, can be obtained from either~\cite[\S{5.1.3--5.1.4}]{gl} or part (3) of~\cite[Thm.\@ B]{webster}.
Note that our $e$, $\kappa_{\sigma, 1}$, and $\kappa_{\tau, j}$ correspond to the $\ell$, $\kappa$, and $\kappa s_j - j/\ell$ in~\cite[\S{2.3.2}]{gl}.

\begin{thm}[Gordon--Losev, Webster]\label[thm]{thm:gl-webster}
For any $C_{e, a}$-invariant vector $\vec{\nu} \in \bb{C}^{\Refl(C_{e, a})}$ and $\tilde{w} \in \bb{Z}^e \rtimes S_e$, there is an equivalence
\begin{align}\label{eq:webster}
	\sf{D}^b(\sf{O}_{C_{e, a}}^\rat(\vec{\nu}))
	\xrightarrow{\sim}
	\sf{D}^b(\sf{O}_{C_{e, a}}^\rat(\tilde{w} \cdot \vec{\nu})).
\end{align}
\end{thm}

We will only need the case where $\tilde{w}$ is a finite permutation.
Below, recall $\vec{b}_e(\lambda)$ from  \S\ref{subsec:gl}.
The facts that let us combine~\Cref{mainthm:2} with~\Cref{thm:gl-webster} are:

\begin{lem}\label[lem]{lem:dilation}
\begin{enumerate}
\item 
	If $\vec{\nu}$ is a $\cal{T}_{e, m, \vec{s}}$-parameter and $\vec{s}^\dagger \equiv w(\vec{s}) \pmod{m}$ for some $w \in S_e$, then $w \cdot \vec{\nu}$ is a $\cal{T}_{e, m, \vec{s}^\dagger}$-parameter.

\item
	If $\vec{s}$ satisfies \eqref{eq:starting-charges}, $\vec{s}^\dagger = \tilde{w}_{e, m, s}(\vec{b}_e(\lambda))$, and $s$ is a multiple of $m$, then $\vec{s}^\dagger \equiv  w_{e, m, s}(\vec{s}) \pmod{m}$.
	
\end{enumerate}
\end{lem}

\begin{proof}
(1) follows from comparing \eqref{eq:kappa-vs-zeta} and \eqref{eq:hecke-e-m-s}.
To show (2), compute
\begin{align} 
e\tilde{w}_{e, m, s}(\vec{b}_e(\lambda))^{(i)}
	&= eb_e^{(r_e(mi + s))}(\lambda) - eq_e(mi + s)\\
	&= eb_e^{(r_e(mi + s))}(\lambda) - (mi + s) + r_e(mi + s) \\
	&\equiv eb_e^{(r_e(mi + s))}(\lambda) + r_e(mi + s) &&\pmod{m}.
\end{align}
Then recall that $eb_e^{(r_e(mi + s))}(\lambda) + r_e(mi + s) = w_{e, m, s}(\vec{a}_e(\lambda))^{(i)}$.
\end{proof}

Since $e, m$ are coprime, we can pick $x, y \in \bb{Z}$ such that $\ell_\lambda - mx = \ell_\mu - ey$.
Let $\vec{s}^\dagger = \tilde{w}_{e, m, mx}(\vec{b}_e(\lambda))$ and $\vec{r}^\dagger = \tilde{w}_{m, e, ey}(\vec{b}_m(\mu))$.
Let $\vec{\nu}_e^\dagger = w_{e, m, mx} \cdot \vec{\nu}_e$ and $\vec{\nu}_m^\dagger = w_{m, e, ey} \cdot \vec{\nu}_m$.

By~\Cref{lem:dilation}, $\vec{\nu}_e^\dagger$ is a $\cal{T}_{e, m, \vec{s}^\dagger}$-parameter and $\vec{\nu}_m^\dagger$ is a $\cal{T}_{m, e, \vec{r}^\dagger}$-parameter.
So, for some blocks $\sf{b}^\dagger \subseteq \sf{Rep}(H_{C_{e, a}, m, \vec{s}^\dagger})$ and $\sf{c}^\dagger \subseteq \sf{Rep}(H_{C_{m, d}, e, \vec{r}^\dagger})$, \Cref{thm:gl-webster} produces the equivalences in~\eqref{eq:derived-equiv}.
Finally, the core-quotient bijection shows that $\vec{b}_e(\lambda)$ is the $\bb{Z}^e$-component of $\Upsilon_e^1(|\rho, \ell_\lambda\rangle)$ for any $\rho \in \Uch(\bbb{G})_{\bbb{L}, \lambda}$, and analogously with $m, \bbb{M}, \mu$ in place of $e, \bbb{L}, \lambda$.
So the commutativity of the diagram in~\Cref{mainthm:2} shows that $\sf{b}^\dagger$ and $\sf{c}^\dagger$ are matched under $\Upsilon_m^e$ in the sense of~\Cref{thm:level-rank}.

\begin{rem}Note that the sets in parts (1)--(2) of~\Cref{thm:level-rank} are empty unless $\sum_i s^{(i)} = \sum_j r^{(j)}$. We can confirm this identity directly for the vectors $\vec{s}^\dagger, \vec{r}^\dagger$ in the proof above.
Indeed, writing $s = mx$ and $t = ey$, we have
\begin{align}
\sum_i s^{\dagger, (i)}
	&= \sum_i b_e^{(i)}(\lambda)
	- \sum_i q_e(mi + s) = \ell_\lambda - \left(s + \tfrac{(e - 1)(m - 1)}{2} \right),\\
\sum_j r^{\dagger, (j)}
	&= \sum_j b_m^{(j)}(\mu)
	- \sum_{j}q_m(ej + t) = \ell_\mu - \left(t + \tfrac{(m - 1)(e - 1)}{2} \right).
\end{align}
where we have used $\sum_i r_e(mi + s) = \frac{1}{2} e(e - 1)$ and $\sum_j r_m(ej + t) = \frac{1}{2} m(m - 1)$.
Since $\ell_\lambda - s = \ell_\mu - t$, we get the claimed identity.
\end{rem}

\section{The Exceptional Groups}\label{sec:exceptional}

\subsection{}

To conclude the paper, we explain how to check the numerical predictions of \Cref{mainconj:1} when $\Gamma_\bbb{G}$ is irreducible of exceptional type.

\subsection{\texorpdfstring{$\Phi$-Cuspidal Pairs}{Φ-Cuspidal Pairs}}\label{subsec:data-hc}

By \S\ref{subsec:singular}, it suffices to assume that $\bbb{G}$ is adjoint, $e$ is a singular number, and $(\bbb{L}, \lambda)$ is a $\Phi_e$-cuspidal pair with $\bbb{L} \neq \bbb{G}$.

Below we list the singular numbers for each exceptional type, which can be deduced from the table in~\cite[\S{2.9}]{carter_93}.
For the term \dfemph{twisted Coxeter number}, see \S\ref{subsec:regular}.
It is the usual Coxeter number except for the twisted types ${}^3D_4$ and ${}^2E_6$, where it is given by~\cite[Table 10]{springer_74}.
\begin{align}
\begin{array}{rll}
\textsf{type}
	&\textsf{singular numbers}
	&\textsf{twisted Coxeter number}\\
\hline
G_2
	&1, 2, 3
	&6\\
{}^3D_4
	&1, 2, 3, 6
	&12\\
F_4
	&1, 2, 3, 4, 6, 8
	&12\\
E_6
	&1, 2, 3, 4, \myboxed{5}, 6, 8, 9
	&12\\
{}^2E_6
	&1, 2, 3, 4, 6, 8, \myboxed{10}, 12
	&18\\
E_7
	&1, 2, 3, \myboxed{4}, \myboxed{5}, 6, 7, \myboxed{8}, 9, \myboxed{10}, \myboxed{12}, 14
	&18\\
E_8
	&1, 2, 3, 4, 5, 6, \myboxed{7}, 8, \myboxed{9}, 10, 12, \myboxed{14}, 15, \myboxed{18}, 20, 24
	&30
\end{array}
\end{align}
If $e = 1$, then the options for $[\bbb{L}, \lambda] \in \sf{HC}_e(\bbb{G})$ are given by~\cite[Table II]{lusztig_78}.
One can work out the associated relative Weyl group and $\Phi_e$-Harish-Chandra series from the same data:
See \S{3.25}--{3.26} of \emph{ibid.}
If $[\bbb{L}, \lambda] = [\bbb{T}, 1]$ for some $\Phi_e$-split maximal torus $\bbb{T} \leq \bbb{G}$, then by~\Cref{prop:regular}, $e$ is a regular number, so the associated relative Weyl group is given by~\cite[Table 3]{bmm} in the non-cyclic cases.
Here, the $\Phi_e$-Harish-Chandra series is implicitly worked out in~\cite[Ch.\@ 10--11]{lusztig_84}.
It can also be worked out from~\Cref{thm:bmm}(2d).

All other options for $[\bbb{L}, \lambda]$, along with their relative Weyl groups, are listed in Table 1 of~\cite{bmm}.
The associated $\Phi_e$-Harish-Chandra series are then given by their Table 2.


Note that the tables of~\cite{bmm} use Shephard--Todd notation for the relative Weyl groups.
We will also use this notation in what follows.

\subsection{Specializations}\label{subsec:data-s}




The specializations $\cal{S}^\bbb{G}_{\bbb{L}, \lambda}$ that we need are listed in~\cite[\S{5}]{bm_93} and~\cite[Table 9]{malle}.

Note that in~\cite{bm_93}, the data is organized by the isomorphism class of the group $W^\bbb{G}_{\bbb{L}, \lambda}$, rather than by $[\bbb{L}, \lambda]$, and different notation is used to label the complex reflection groups.
Thus, in the tables at the end of this section, we have reorganized the data for the convenience of the reader.
For each $e$, we list the set of options for $[\bbb{L}, \lambda]$, omitting those where $\bbb{L} = \bbb{G}$.
For each $[\bbb{L}, \lambda]$, we state the isomorphism class of $W^\bbb{G}_{\bbb{L}, \lambda}$ in Shephard--Todd notation, the parameters that define $\cal{S}^\bbb{G}_{\bbb{L}, \lambda}$, and the reference where we found this data. 

Our tables omit the cases that can be inferred from Ennola duality as follows.
Recall from~\Cref{thm:ennola}(4) that $W^{\bbb{G}^-}_{\bbb{L}^-, \lambda^-} \simeq W^\bbb{G}_{\bbb{L}, \lambda}$.
Via this isomorphism, Brou\'e--Malle predict that $\cal{S}^{\bbb{G}^-}_{\bbb{L}^-, \lambda^-}$ is related to $\cal{S}^\bbb{G}_{\bbb{L}, \lambda}$ by the substitution $x \mapsto -x$.
Compare to \S\ref{subsec:gu}.

The references are abbreviated as follows:
\begin{itemize}
	\item 	\cite{lusztig_coxeter} refers to (7.3) of \emph{ibid.}
	
	\item 	\cite{lusztig_78} refers to Table II of \emph{ibid.}
			Note that Lusztig uses Iwahori's sign conventions, \emph{cf.}\ \Cref{ex:principal-hecke}.
			In our tables, we have flipped the signs.
	
	\item 	``BM, \_'' refers to~\cite[\_]{bm_93}.
	
	\item 	In type $E_6$, ``\cite{lusztig_78}${}_\ur{E}$'' means that we use Ennola to derive the prediction for $\cal{S}^\bbb{G}_{\bbb{L}, \lambda}$ from the corresponding entry for ${}^2E_6$ in~\cite[Table II]{lusztig_78}.
	
	\item 	In type $E_8$, ``BM/M'' means that the given entry or its Ennola dual can be derived from~\cite[180]{bm_93} and~\cite[Table 9]{malle}.
	
\end{itemize}
We have fixed an apparent typo in~\cite[180]{bm_93} for $\bbb{G}$ of type $E_7$ and $e = 3, 6$.
We thank Malle and Michel for supplying the cases where $\bbb{G}$ is of type $E_7$ and $e = 4$ and $W^\bbb{G}_{\bbb{L}, \lambda} = G(4, 1, 2)$, which were omitted from~\cite{bm_93, malle}:
See, instead,~\cite[Lem.\ 4.22--4.23]{bmm_14}.

\subsection{Blocks}\label{subsec:data-blocks}

In the notation of \Cref{sec:blocks}, the $H^\bbb{G}_{\bbb{L}, \lambda}(\zeta_m)$-blocks of $\Irr(W^\bbb{G}_{\bbb{L}, \lambda})$ can be determined from:
\begin{itemize}
\item 	\cite[Thm.\ 2.11]{lm} when $W^\bbb{G}_{\bbb{L}, \lambda} = G(e, 1, a) \vcentcolon= \bb{Z}_e \wr S_a$, as in \Cref{sec:gl-gu}.

\item 	\cite[Appendix F]{gp} and~\cite[\S{7.2}]{gj} when $W^\bbb{G}_{\bbb{L}, \lambda}$ is an irreducible Weyl group of split exceptional type.

\item 	\cite[\S{3}]{cm} when $W^\bbb{G}_{\bbb{L}, \lambda} = G_4, G_5, G_8, G_9, G_{10}, G_{16}$.
		See also \S{1.3} in \emph{ibid.}

\end{itemize}

\subsection{What We Can Verify}

For each pair of singular numbers $e \neq m$, and for all  $[\bbb{L}, \lambda] \in \sf{HC}_e(\bbb{G})$ and $[\bbb{M}, \mu] \in \sf{HC}_m(\bbb{G})$, the information in~\Crefrange{subsec:data-hc}{subsec:data-blocks} makes it possible to compare the sizes of: the sets $\Uch(\bbb{G})_{\bbb{L}, \lambda} \cap \Uch(\bbb{G})_{\bbb{M}, \mu}$, the $H^\bbb{G}_{\bbb{L}, \lambda}(\zeta_m)$-blocks of $\Irr(W^\bbb{G}_{\bbb{L}, \lambda})$, and the $H^\bbb{G}_{\bbb{M}, \mu}(\zeta_e)$-blocks of $\Irr(W^\bbb{G}_{\bbb{M}, \mu})$.

\begin{prop}\label[prop]{prop:exceptional}
In each case,
these sizes are consistent with the existence of the partitions and bijections predicted by \Cref{conj:bij}.
\end{prop}

In the cases where the references listed in \S\ref{subsec:data-blocks} give the decomposition numbers of the blocks of the Hecke algebras, we can even verify identity \eqref{eq:multiplicity}.
We will return to this point in a sequel.

Below, we give the complete verification for types $G_2$ and $F_4$.
First, we list all options for $e$ and $(\bbb{L}, \lambda)$, omitting Ennola-dual cases and cases where $\bbb{L} = \bbb{G}$, per \S\ref{subsec:data-s}.
We also omit $\lambda$ when it is uniquely determined by $\bbb{L}$: for instance, when $\bbb{L}$ is a torus.
Then, for each pair $(e, m)$ up to reordering, and choice of $[\bbb{L}, \lambda] \in \sf{HC}_e(\bbb{G})$ and $[\bbb{M}, \mu]  \in \sf{HC}_m(\bbb{G})$, we give
\begin{align}\tag{$\ast$}
\begin{array}{c}
\text{a partition of $|\Uch(\bbb{G})_{\bbb{L}, \lambda} \cap \Uch(\bbb{G})_{\bbb{M}, \mu}|$ that describes the sizes of both}\\
\text{the $H^\bbb{G}_{\bbb{L}, \lambda}(\zeta_m)$-blocks of $\Irr(W^\bbb{G}_{\bbb{L}, \lambda})$ and the $H^\bbb{G}_{\bbb{M}, \mu}(\zeta_e)$-blocks of $\Irr(W^\bbb{G}_{\bbb{M}, \mu})$}.
\end{array}
\end{align} 
For the other types, we have prepared similar lists and tables using~\cite{m15},
available upon request.



\subsection{\texorpdfstring{Type $G_2$}{Type G2}}

Here, the only relevant Levis are maximal tori,
so we just list $|\bbb{L}|$.

\begin{longtable}{p{1em}p{1.5em}p{2.5em}p{9.5em}p{4em}}
$e$&$\bbb{L}$&$W^\bbb{G}_{\bbb{L}, \lambda}$&$\cal{S}_{\bbb{G},\bbb{L},1}(u_{\cal{C}, j})$&\textsf{reference}\\
\hline
$1$&$\Phi_1^2$&$W_{G_2}$&$1,-x;1,-x$&Ex.\ \ref{ex:principal-hecke}\\
$6$&$\Phi_6$&$\bb{Z}_6$&$1,\pm x,x^2,\zeta_3x,\zeta_3^{2}x$&\cite{lusztig_coxeter}
\end{longtable}

(The cases $e = 2, 3$ are Ennola-dual to $e = 1, 6$.)
The results for ($\ast$) are:
\begin{longtable}{lll}
$(e, m)$
	&$|\Uch(\bbb{G})_{\bbb{L}, \lambda} \cap \Uch(\bbb{G})_{\bbb{M}, \mu}|$
	&($\ast$)\\
\hline
$(1, 2)$
	&$4$
	&$4$\\
$(1, 3), (1, 6), (2, 3), (2, 6)$
	&$3$
	&$3$\\
$(3, 6)$
	&$4$
	&$2 + 2$
\end{longtable}





\subsection{\texorpdfstring{Type $F_4$}{Type F4}}\label{subsec:f4}

Below, the notation $\Phi_e^a.B_2$ means we extend a generic almost-simple group of type $B_2$ by a torus of order $\Phi_e^a$.
The notations $\phi_{1^2, -}$, $\phi_{-, 2}$ are from~\cite[\S{13.8}]{carter_93}.
\begin{longtable}{p{1em}p{3em}p{2.5em}p{3em}p{16.5em}p{4em}}
$e$&$\bbb{L}$&$\lambda$&$W^\bbb{G}_{\bbb{L}, \lambda}$&$\cal{S}^\bbb{G}_{\bbb{L}, \lambda}(u_{\cal{C}, j})$&\textsf{reference}\\
\hline
$1$&$\Phi_1^4$&&$W_{F_4}$&$1,-x;1,-x$&Ex.\ \ref{ex:principal-hecke}\\
$1$&$\Phi_1^2.B_2$&&$W_{BC_2}$&$1,-x^3;1,-x^3$&\cite{lusztig_78}\\
$3$&$\Phi_3^2$&&$G_5$&$1,x,x^2$&BM, 5.9\\
$4$&$\Phi_4^2$&&$G_8$&$1,\pm x,x^2$&BM, 5.12\\
$4$&$\Phi_4.B_2$&$\phi_{1^2,-}$&$\bb{Z}_4$&$1,\pm x^3,x^6$&BM, 8.1\\
$4$&$\Phi_4.B_2$&$\phi_{-,2}$&$\bb{Z}_4$&$1,\pm x^3,x^6$&BM, 8.1\\
$8$&$\Phi_8$&&$\bb{Z}_8$&$1,x^2,\pm x^3,\pm\zeta_4x^3,x^4,x^6$&BM, 8.1\\
$12$&$\Phi_{12}$&&$\bb{Z}_{12}$&$1,\pm x,\pm x^2,\pm x^3,x^4,\pm \zeta_4x^2,\zeta_3 x^2,\zeta_3^2 x^2$&\cite{lusztig_coxeter}
\end{longtable}

To state the results for ($\ast$):
First, we list the cases where there are unique choices of $[\bbb{L}, \lambda]$ and $[\bbb{M}, \mu]$ with $|\Uch(\bbb{G})_{\bbb{L}, \lambda} \cap \Uch(\bbb{G})_{\bbb{M}, \mu}|$ nonempty:
\begin{longtable}{lll}
$(e, m)$
	&$|\Uch(\bbb{G})_{\bbb{L}, \lambda} \cap \Uch(\bbb{G})_{\bbb{M}, \mu}|$
	&($\ast$)\\
\hline
$(1, 3)$
	&$18$
	&$9 + 9$\\
$(1, 8)$
	&$5$
	&$5$\\
$(3, 6)$
	&$12$
	&$6 + 6$\\
$(3, 8)$
	&$3$
	&$3$\\
$(3, 12), (4, 8)$
	&$6$
	&$3 + 3$\\
$(4, 12)$
	&$9$
	&$3 + 3 + 3$\\
$(8, 12)$
	&$4$
	&$2 + 2$
\end{longtable}










In the remaining cases, it turns out that the partitions ($\ast$) are all trivial.
Below, we list the sizes $|\Uch(\bbb{G})_{\bbb{L}, \lambda} \cap \Uch(\bbb{G})_{\bbb{M}, \mu}|$ as entries of a table, where the rows are the $[\bbb{L}, \lambda]$ and the columns are the $[\bbb{M}, \mu]$.

\begin{longtable}{p{3em}l}
$(e, m)$
	&$|\Uch(\bbb{G})_{\bbb{L}, \lambda} \cap \Uch(\bbb{G})_{\bbb{M}, \mu}|$\\[0.5ex]
\hline\\[-2.5ex]
$(1, 2)$
	&
{\small$\begin{array}{p{4.5em}ll}
	&(\Phi_2^4, 1)
	&(\Phi_2^2B_2, -)\\
$(\Phi_1^4, 1)$
	&18
	&4\\
$(\Phi_1^2 B_2, -)$
	&4
	&1
\end{array}$}\\
\mbox{}\\[-2.5ex]
$(1, 4)$
	&
{\small$\begin{array}{p{4.5em}lll}
	&(\Phi_2^4, 1)
	&(\Phi_4B_2, \phi_{1^2, -})
	&(\Phi_4B_2, \phi_{-, 2})\\
$(\Phi_1^4, 1)$
	&9
	&3
	&3\\
$(\Phi_1^2 B_2, -)$
	&3
	&1
	&1
\end{array}$}\\
\mbox{}\\[-2.5ex]
$(1, 6)$
	&
{\small$\begin{array}{p{4.5em}l}
	&(\Phi_6^2, 1)\\
$(\Phi_1^4, 1)$
	&13\\
$(\Phi_1^2 B_2, -)$
	&4
\end{array}$}\\
\mbox{}\\[-2.5ex]
$(1, 12)$
	&
{\small$\begin{array}{p{4.5em}l}
	&(\Phi_{12}, 1)\\
$(\Phi_1^4, 1)$
	&5\\
$(\Phi_1^2 B_2, -)$
	&3
\end{array}$}\\
\mbox{}\\[-2.5ex]
$(3, 4)$
	&
{\small$\begin{array}{p{4.5em}lll}
	&(\Phi_2^4, 1)
	&(\Phi_4B_2, \phi_{1^2, -})
	&(\Phi_4B_2, \phi_{-, 2})\\
$(\Phi_3^2, 1)$
	&6
	&3
	&3
\end{array}$}
\end{longtable}

\subsection{The Other Exceptional Types}

\subsubsection{${}^3D_4$}

As in type $G_2$, the only relevant Levis are maximal tori.
\begin{longtable}{p{1em}p{2.5em}p{2.5em}p{6em}p{4em}}
$e$&$\bbb{L}$&$W^\bbb{G}_{\bbb{L}, \lambda}$&$\cal{S}^\bbb{G}_{\bbb{L}, \lambda}(u_{\cal{C}, j})$&\textsf{reference}\\
\hline
$1$&$\Phi_1^2\Phi_3$&$W_{G_2}$&$1,-x;1,-x^3$&\cite{lusztig_78}\\
$3$&$\Phi_3^2$&$G_4$&$1,x,x^2$&BM, \S5.6\\
$12$&$\Phi_{12}$&$\bb{Z}_4$&$1,\pm x^3,x^6$&\cite{lusztig_coxeter}
\end{longtable}

\subsubsection{$E_6$}\label{subsubsec:e6}

The notation is similar to that of \S\ref{subsec:f4}.
The notations $\phi_{2^2}, \phi_2, \phi_{1^2}$ are from~\cite[\S{13.8}]{carter_93}, while the notation \Drep{} is from~\cite[\S{13.9}]{carter_93}.
{\small\begin{longtable}{p{1em}p{4.25em}p{3.25em}p{3em}p{16em}p{4.25em}}
$e$&$\bbb{L}$&$\lambda$&$W^\bbb{G}_{\bbb{L}, \lambda}$&$\cal{S}^\bbb{G}_{\bbb{L}, \lambda}(u_{\cal{C}, j})$&\textsf{reference}\\
\hline
$1$&$\Phi_1^6$&&$W_{E_6}$&$1,-x$&Ex.\ \ref{ex:principal-hecke}\\
$1$&$D_4$&&$W_{A_2}$&$1,-x^4$&\cite{lusztig_78}\\
$2$&$\Phi_1^2\Phi_2^4$&&$W_{F_4}$&$1,x;1,-x^2$&\cite{lusztig_78}${}_\ur{E}$\\
$2$&$\Phi_2.A_5$&&$ \bb{Z}_2$&$1,x^9$&\cite{lusztig_78}${}_\ur{E}$\\
$3$&$\Phi_3^3$&&$G_{25}$&$1,x,x^2$&BM, 5.16\\
$3$&$\Phi_3.{}^{3}D_4$&\Drep{}&$ \bb{Z}_3$&$1,x^4,x^8$&BM, 8.1\\
$4$&$\Phi_4^2\Phi_1^2$&&$G_8$&$1,-x,x^2,-x^3$&BM, 5.12\\
$4$&$\Phi_1\Phi_4.{}^2A_3$&$\phi_{2^2}$&$ \bb{Z}_4$&$1,x^3,x^6,x^9$&BM, 8.1\\
$5$&$\Phi_1\Phi_5.A_1$&$\phi_2$&$ \bb{Z}_5$&$1,x^3,x^4,$ $x^6,x^{12}$&BM, 8.1\\
$5$&$\Phi_1\Phi_5.A_1$&$\phi_{1^2}$&$ \bb{Z}_5$&$1,x^6,x^8,$ $x^9,x^{12}$&BM, 8.1\\
$6$&$\Phi_6^2\Phi_3$&&$G_{5}$&$1,-x,x^2;$ $1,x^2,x^4$&BM, 5.9\\
$8$&$\Phi_1\Phi_2\Phi_8$&&$ \bb{Z}_8$&$1,\pm x^3,x^4,x^5,\pm x^6,x^9$&BM, 8.1\\
$9$&$\Phi_9$&&$ \bb{Z}_{9}$&$1,x^2,x^3,x^4,\zeta_3x^4,\zeta_3^2x^4,x^5,x^6,x^8$&BM, 8.1\\
$12$&$\Phi_3\Phi_{12}$&&$ \bb{Z}_{12}$&$1, x,\pm x^2,\pm x^3,\pm x^4,x^5,x^6,\zeta_3 x^3,\zeta_3^2 x^3$&\cite{lusztig_coxeter}
\end{longtable}}

\subsubsection{${}^2E_6$}

By Ennola duality, this table can be derived from the table for $E_6$ by substituting $-x$ for $x$ everywhere.


\subsubsection{$E_7$}\label{subsubsec:e7}

The notation is similar to that of \S\ref{subsec:f4} and \S\ref{subsubsec:e6}. 
{\small\begin{longtable}{p{1em}p{4.75em}p{2.5em}p{2.75em}p{17.25em}p{4.25em}}
$e$&$\bbb{L}$&$\lambda$&$W^\bbb{G}_{\bbb{L}, \lambda}$&$\cal{S}^\bbb{G}_{\bbb{L}, \lambda}(u_{\cal{C}, j})$&\textsf{reference}\\
\hline
$1$&$\Phi_1^7$&&$W_{E_7}$&$1,-x$&Ex.\ \ref{ex:principal-hecke}\\
$1$&$D_4$&&$W_{BC_3}$&$1,-x^4;1,-x$&\cite{lusztig_78}\\
$1$&$E_6$&&$\bb{Z}_2$&$1,-x^9$&\cite{lusztig_78}\\
$4$&$\Phi_4^2.A_1^3$&$\phi_2^3$&$G_{8}$&$1,\pm x,-x^4$&BM, 5.12\\
$4$&$\Phi_4^2.A_1^3$&$\phi_{1^2}^3$&$G_{8}$&$1,\pm x^3,-x^4$&BM, 5.12\\
$4$&$\Phi_4^2.A_1^3$&$\phi_2^2\phi_{1^2}$&$G_{4,1,2}$&$1, \pm x, -x^4; 1, x^6$&\\
$4$&$\Phi_4^2.A_1^3$&$\phi_2\phi_{1^2}^2$&$G_{4,1,2}$&$1, \pm x^3, -x^4; 1, x^6$&\\
$6$&$\Phi_6^3\Phi_2$&&$G_{26}$&$1,-x,x^2;1,x^3$&BM, 180\\
$6$&$\Phi_2\Phi_6.{}^3D_4$&$\phi_{2^11^1}$&$\bb{Z}_6$&$1,x,x^4,x^5,x^8$, $x^9$&BM, 8.1\\
$6$&$\Phi_6.{}^2A_5$&$\phi_{4.2}$&$\bb{Z}_6$&$1,\pm x,x^3,x^5,x^8$&BM, 8.1\\
$6$&$\Phi_6.{}^2A_5$&$\phi_{2^21^2}$&$\bb{Z}_6$&$1,x^3,x^5,\pm x^7, x^8$&BM, 8.1\\
$8$&$\Phi_8{}^2.D_4$&$\phi_2^2$&$\bb{Z}_{8}$&$1,x^2,\pm x^3,\pm x^5, x^6,x^{12}$&BM, 8.1\\
$8$&$\Phi_8{}^2.D_4$&$\phi_2\phi_{1^2}$&$\bb{Z}_{8}$&$1,\pm x,x^4, x^6, \pm x^7,x^{10}$&BM, 8.1\\
$8$&$\Phi_8{}^2.D_4$&$\phi_{1^2}^2$&$\bb{Z}_{8}$&$1,x^6,\pm x^7,\pm x^9, x^{10},x^{12}$&BM, 8.1\\
$8$&$\Phi_8{}^2.D_4$&$\phi_{1^2}\phi_2$&$\bb{Z}_{8}$&$1,\pm x^3,x^4, x^6, \pm x^9,x^{10}$&BM, 8.1\\
$10$&$\Phi_1\Phi_5.{}^2A_2$&$\phi_3$&$\bb{Z}_{10}$&$1,-1,x,\pm x^2, \pm x^3,x^4,x^6,x^9$&BM, 8.1\\
$10$&$\Phi_1\Phi_5.{}^2A_2$&$\phi_{2^11^1}$&$\bb{Z}_{10}$&$1,x, x^3,x^4, \pm\zeta_4 x^{\frac{9}{2}},x^5,x^6, x^8,x^9$&BM, 8.1\\
$10$&$\Phi_1\Phi_5.{}^2A_2$&$\phi_{1^3}$&$\bb{Z}_{10}$&$1,x^3,x^5,\pm x^6, \pm x^7,x^8,\pm x^9$&BM, 8.1\\
$12$&$\Phi_{12}.A_1(q^3)$&$\phi_2$&$\bb{Z}_{12}$&$1,\pm x,x^2,\zeta_3x^2, \zeta_3^2x^2,\pm x^3,x^4, \pm x^5,x^8$&BM, 8.1\\
$12$&$\Phi_{12}.A_1(q^3)$&$\phi_{1^2}$&$\bb{Z}_{12}$&$1,\pm x^3,x^4,\pm x^5, x^6,\zeta_3x^6, \zeta_3^2x^6, \pm x^7,x^8$&BM, 8.1\\
$14$&$\Phi_2\Phi_{14}$&&$\bb{Z}_{14}$&$1,x^2,\pm x^3,\pm x^4, \pm\zeta_4x^{\frac{9}{2}}, \pm x^5, \pm x^6, x^7, x^9$&BM, 8.1\\
$18$&$\Phi_2\Phi_{18}$&&$\bb{Z}_{18}$&$1, x, \pm x^2, \pm x^3, \zeta_3 x^3, \zeta_3^2 x^3, \pm\zeta_4x^{\frac{7}{2}}, \pm x^4$,
$\zeta_3 x^4, \zeta_3^2 x^4,\pm x^5, x^6, x^7$&\cite{lusztig_coxeter}
\end{longtable}}

\subsubsection{$E_8$}

The notation is similar to that of \Cref{subsubsec:e7}.
{\small\begin{longtable}{p{1em}p{4.25em}p{3.25em}p{2.75em}p{17em}p{4.25em}}
$e$&$\bbb{L}$&$\lambda$&$W^\bbb{G}_{\bbb{L}, \lambda}$&$\cal{S}^\bbb{G}_{\bbb{L}, \lambda}(u_{\cal{C}, j})$&\textsf{reference}\\
\hline
$1$&$\Phi_1^8$&&$W_{E_8}$&$1,-x$&Ex.\ \ref{ex:principal-hecke}\\
$1$&$\Phi_1^4.D_4$&&$W_{F_4}$&$1,-x^4;1,-x$&\cite{lusztig_78}\\
$1$&$\Phi_1^2.E_6$&&$W_{G_2}$&$1,-x^9;1,-x$&\cite{lusztig_78}\\
$1$&$\Phi_1.E_7$&&$\bb{Z}_2$&$1,-x^{15}$&\cite{lusztig_78}\\
$4$&$\Phi_4^4$&&$G_{31}$&$1,x^2$&BM, 180\\
$4$&$\Phi_4^2.D_4$&$\phi_{3,1}$&$G_8$&$1,-1,-x,x^5$&BM, 5.12\\
$4$&$\Phi_4^2.D_4$&$\phi_{123,013}$&$G_8$&$1,-x^4,\pm x^5$&BM, 5.12\\
$4$&$\Phi_4^2.D_4$&$\phi_{013,2}$&$G_8$&$1,-x,-x^4,x^5$&BM, 5.12\\
	$4$&$\Phi_4^2.D_4$&$\phi_{012,3}$&$G_8$&$1,x,-x^4,-x^5$&BM, 5.12\\
$6$&$\Phi_6^4$&&$G_{32}$&$1,-x,x^2$&BM, 180\\
$6$&$\Phi_6^2.{}^3D_4$&$\phi_{2,1}$&$G_5$&$1,-x,x^2;1,x^4,x^8$&BM, 5.9\\
$6$&$\Phi_6.{}^2E_6$&$\phi_{9,6}'$&$\bb{Z}_6$&$1,-1,x^2,x^5,$ $x^7,x^{10}$&BM, 8.1\\
$6$&$\Phi_6.{}^2E_6$&$\phi_{9,6}''$&$\bb{Z}_{6}$&$1,x^3,x^5,x^8,\pm x^{10}$&BM, 8.1\\
$6$&$\Phi_6.{}^2E_6$&$\phi_{6,6}''$&$\bb{Z}_{6}$&$1,x,\pm x^5,x^9,x^{10}$&BM, 8.1\\
$8$&$\Phi_8^2$&$1$&$G_{9}$&$1,x^2,x^4,x^6;1,x^4$&BM/M\\
$8$&$\Phi_8.{}^2D_4$&$\phi_{13,-}$&$\bb{Z}_8$&$\pm1,x,x^3,x^5,\pm x^6,x^{15}$&BM, 8.1\\
$8$&$\Phi_8.{}^2D_4$&$\phi_{0123,13}$&$\bb{Z}_8$&$1,\pm x^9,x^{10},x^{12},x^{14},\pm x^{15}$&BM, 8.1\\
$8$&$\Phi_8.{}^2D_4$&$\phi_{023,1}$&$\bb{Z}_8$&$1,x^3,x^5,-x^6,x^9,-x^{10},x^{12},x^{15}$&BM, 8.1
	\\$8$&$\Phi_8.{}^2D_4$&$\phi_{012,3}$&$\bb{Z}_8$&$1,-x^3,-x^5,-x^6,-x^9,-x^{10},x^{12},-x^{15}$&BM, 8.1\\
$8$&$\Phi_8.{}^2D_4$&$\phi_{123,0}$&$\bb{Z}_8$&$1,x^3,-x^5,x^6,-x^9,x^{10},x^{12},x^{15}$&BM, 8.1
	\\$8$&$\Phi_8.{}^2D_4$&$\phi_{013,2}$&$\bb{Z}_8$&$1,-x^3,x^5,x^6,x^9,x^{10},x^{12},-x^{15}$ &BM, 8.1\\
$10$&$\Phi_{10}^2$&$1$&$G_{16}$&$1,-x,x^2,-x^3,x^4$&BM/M\\
$10$&$\Phi_{10}.{}^2A_4$&$\phi_{3^12^1}$&$\bb{Z}_{10}$&$\pm 1,\pm x^3,x^4,\pm x^6,x^7,x^9,x^{12}$&BM, 8.1\\
$10$&$\Phi_{10}.{}^2A_4$&$\phi_{2^21^1}$&$\bb{Z}_{10}$&$1, x^3,x^5,\pm x^6,x^8,\pm x^9,\pm x^{12}$&BM, 8.1\\
$12$&$\Phi_{12}^2$&$1$&$G_{10}$&$1,x^3,-x^3,x^6;1,-x^2,x^4$&BM/M\\
$12$&$\Phi_{12}.{}^3D_4$&$\phi_{1,3}'$&$\bb{Z}_{12}$&$1,\zeta_3,\zeta_3^2,\pm x,x^2,\pm x^3, \pm x^5,x^6,x^{10}$&BM, 8.1\\
$12$&$\Phi_{12}.{}^3D_4$&$\phi_{1,3}''$&$\bb{Z}_{12}$&$1, x^4, \pm x^5,\pm x^7,x^8,\pm x^9,x^{10},\zeta_3x^{10}$, $\zeta_3^2x^{10}$&BM, 8.1\\
$12$&$\Phi_{12}.{}^3D_4$&$\phi_{2,2}$&$\bb{Z}_{12}$&$1,-x,x^2,x^4,\pm x^5,\zeta_3x^{5}, \zeta_3^2x^{5},x^6,x^8,$ $-x^9,x^{10}$&BM, 8.1
	\\$12$&$\Phi_{12}.{}^3D_4$&\Drep{}&$\bb{Z}_{12}$&$1,x,x^2,x^4,\pm x^5,-\zeta_3x^{5}, -\zeta_3^2x^{5},x^6,x^8$, $x^9, x^{10}$&BM, 8.1\\
$14$&$\Phi_2\Phi_{14}.A_1$&$\phi_2$&$\bb{Z}_{14}$&$1,\pm x^3,\pm x^5,\pm x^6,x^7,\pm\zeta_4x^{\frac{15}{2}},x^8,\pm x^9$, $x^{15}$&BM, 8.1\\
$14$&$\Phi_2\Phi_{14}.A_1$&$\phi_{1^2}$&$\bb{Z}_{14}$&$1,\pm x^6,x^7,\pm\zeta_4x^{\frac{15}{2}},x^8,\pm x^9,\pm x^{10},\pm x^{12}$, $x^{15}$&BM, 8.1\\
$18$&$\Phi_{18}.{}^2A_2$&$\phi_3$&$\bb{Z}_{18}$&$1,-1,x,x^2,\zeta_3 x^2,\zeta_3^2 x^2,\pm\zeta_4x^{5/2},\pm x^3$, $\pm x^4,x^5,\zeta_3 x^5,\zeta_3^2 x^5,\pm x^6,x^{10}$&BM, 8.1\\
$18$&$\Phi_{18}.{}^2A_2$&$\phi_{2^11^1}$&$\bb{Z}_{18}$&$1,x,\pm x^3,\pm x^4,\pm x^5, \pm \zeta_3 x^5,\pm\zeta_3^2 x^5,$ $\pm x^6,\pm x^7, x^9,x^{10}$&BM, 8.1\\
$18$&$\Phi_{18}.{}^2A_2$&$\phi_{1^3}$&$\bb{Z}_{18}$&$1,\pm x^4,x^5,\zeta_3 x^5,\zeta_3^2 x^5,\pm x^6,\pm x^7$, $\pm\zeta_4x^{\frac{15}{2}}$, $x^8,\zeta_3 x^8,\zeta_3^2 x^8,x^9,\pm x^{10}$&BM, 8.1\\
$20$&$\Phi_{20}$&&$\bb{Z}_{20}$&$1,\pm x^3,x^4,\pm x^5,\pm x^6,\pm x^7,x^8,\pm x^9$, $\pm\zeta_4x^{6},\zeta_5 x^6,\zeta_5^2 x^6,\zeta_5^3 x^6,\zeta_5^4 x^6,x^{12}$&BM, 8.1\\
$24$&$\Phi_{24}$&&$\bb{Z}_{24}$&$1,x^2,\pm x^3,\pm x^4,\zeta_3 x^4, \zeta_3^2 x^4,\pm x^5,\pm\zeta_4x^{5},$  $\pm \zeta_3 x^5,\pm\zeta_3^2 x^5, \pm x^6,\zeta_3 x^6,\zeta_3^2 x^6,\pm x^7$, $x^8, x^{10}$ &BM, 8.1\\
$30$&$\Phi_{30}$&&$\bb{Z}_{30}$&$1,x,\pm x^2,\pm x^3,\zeta_3 x^3,\zeta_3^2 x^3,\pm\zeta_4x^{\frac{7}{2}},\pm x^4,$ $\pm\zeta_3 x^4,\pm\zeta_3^2 x^4,\zeta_5 x^4,\zeta_5^2 x^4,\zeta_5^3 x^4, \zeta_5^4 x^4$, $\pm\zeta_4x^{\frac{9}{2}},\pm x^5,\zeta_3 x^5, \zeta_3^2 x^5, \pm x^6, x^7,x^8$ &\cite{lusztig_coxeter}
\end{longtable}}


\bibliographystyle{alphaurl}
\bibliography{blocks}

\end{document}